\newcommand{\ZZ}{{\mathbb Z}}
\newcommand{\RR}{{\mathbb R}}
\newcommand{\NN}{{\mathbb N}}
\newcommand{\CC}{{\mathbb C}}
\newcommand{\HH}{{\mathbb H}}
\newcommand{\calB}{{\mathcal B}}
\DeclareMathOperator{\Hom}{Hom}
\DeclareMathOperator{\soc}{soc}
\DeclareMathOperator{\rad}{rad}
\DeclareMathOperator{\res}{res}
\DeclareMathOperator{\supp}{supp}
\DeclareMathOperator{\Mod}{-mod}
\DeclareMathOperator{\ind}{ind}
\begin{document}
\theoremstyle{plain}
\newtheorem{thm}{Theorem}[subsection]
\newtheorem{prop}[thm]{Proposition}
\newtheorem{lem}[thm]{Lemma}
\newtheorem{cor}[thm]{Corollary}
\newtheorem{conj}[thm]{Conjecture}
\newtheorem{qn}[thm]{Question}
\newtheorem{claim}[thm]{Claim}
\newtheorem{defn}[thm]{Definition}
\theoremstyle{definition}
\newtheorem{rem}[thm]{Remark}
\newtheorem{ass}[thm]{Assumption}
\newtheorem{example}[thm]{Example}

\setlength{\parskip}{1ex}

\title[On Brauer algebra simple modules over the complex field]{On Brauer algebra simple modules over the complex field} 

\author{Maud De Visscher} 
\email{M.Devisscher@city.ac.uk}
\author{Paul P Martin}
\email{p.p.martin@leeds.ac.uk}
\maketitle

\begin{abstract}
This paper gives two results on the simple modules for the Brauer algebra over the complex field. First we describe the module structure of the restriction of all simple modules. Second we give a new geometrical interpretation of Ram and Wenzl's construction of bases for \lq $\delta$-permissible' simple modules.
\end{abstract}

\section{Introduction}
\newcommand{\kk}{{\mathtt k}} 

\subsection{} Classical Schur-Weyl duality relates the representations
of the general linear group and the symmetric group via commuting
actions on tensor space. The Brauer algebra was introduced by Brauer
in 1937 to play the role of the symmetric group when one replaces the
general linear group by the orthogonal or symplectic group. 
For any non-negative integer $n$,
any commutative ring $\kk$, and any $\delta\in \kk$, we can define the
Brauer algebra $B_n(\delta)$ as being the $\kk$-algebra 
with basis 
all pair partitions of $2n$. We can represent these basis elements as diagrams (so-called Brauer diagrams) having $2n$ vertices arranged in 2 rows of $n$ vertices each, such that each vertex is linked to precisely one other vertex. The multiplication is then given by concatenation, removing all closed loops, and scalar multiplication by $\delta^k$ where $k$ is the number of closed loops removed.
It's easy to see that $B_n(\delta)$ is generated by the set $\{\sigma_i, e_i \, : \, 1\leq i \leq n-1\}$ where $\sigma_i$ and $e_i$ are given in Figure \ref{sigmae}.

\begin{figure}[ht]
\includegraphics[width=10cm]{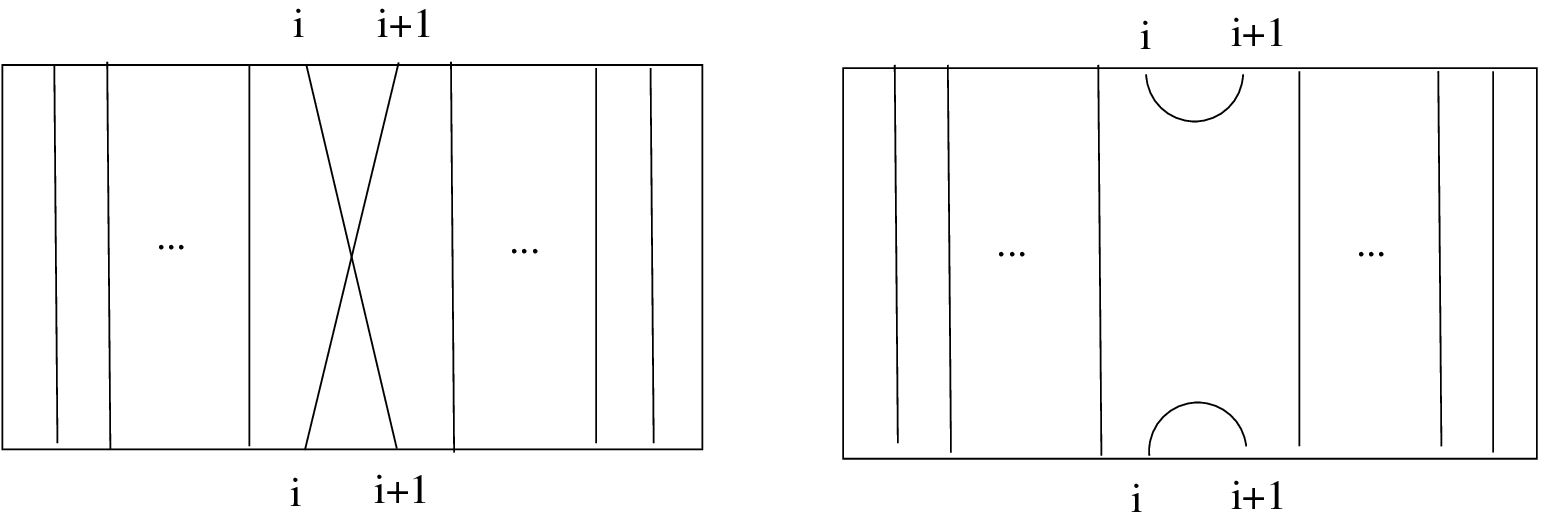}
\caption{}
\label{sigmae}
\end{figure}

The symmetric group algebra $\kk \Sigma_n$ appears naturally as the subalgebra of $B_n(\delta)$ generated by the $\sigma_i$'s. Note that $\kk\Sigma_n$ also occurs as a quotient of $B_n(\delta)$ as explained below. This turns out to be very helpful in studying the representation theory of $B_n(\delta)$.

Assume 
for a moment 
that 
$\delta$ is a unit.
Consider the 
idempotent given by $e=\frac{1}{\delta} e_{n-1}$. Then it is easy to see that 

\begin{equation}\label{eBe}
eB_n(\delta)e\cong B_{n-2}(\delta) \quad \mbox{and}\quad B_n(\delta)/B_n(\delta)eB_n(\delta) \cong \kk\Sigma_n. 
\end{equation}

\noindent Now 
fix $\kk=\CC$ and 
recall that the simple $\CC\Sigma_n$-modules are indexed by partitions of $n$, that is for each partition $\lambda$ we have a (simple) Specht module $S^\lambda$. Using (\ref{eBe}) we can easily deduce by induction on $n$ that the simple modules for $B_n(\delta)$ are indexed by the set $\Lambda_n$ of partitions of $n, n-2, n-4, \ldots$. For each $\lambda\in \Lambda_n$, we denote the corresponding simple module by $L_n(\lambda)$.

When $B_n(\delta)$ is semisimple, the simple modules can be
constructed explicitely by \lq inflating' (or \lq globalising') the
corresponding Specht module, see for example 
\cite{HP}. However the algebra $B_n(\delta)$ is not always semisimple. In 1988, Wenzl showed in \cite{W} that if $B_n(\delta)$ is  not semisimple  then $\delta \in \ZZ$, and in 2005, Rui gave an explicit criterion for semisimplicity in \cite{Rui}.

In this paper, we study the simple modules when $B_n(\delta)$ is not
semisimple. So we will assume that $\delta\in \ZZ$. 
For the moment 
we will also assume that $\delta \neq 0$. 
In this case, $B_n(\delta)$ is a quasi-hereditary algebra with respect
to the opposite order to the one given by the size of partitions. (In
fact, we will work with a refinement of this order, see Section
2.2). In particular, the indecomposable projective modules
$P_n(\lambda)$ ($\lambda \in \Lambda_n$) have a filtration by standard
modules $\Delta_n(\lambda)$ ($\lambda\in \Lambda_n$). The standard
modules can be constructed 
explicitly 
(as inflation of Specht modules, as in the semisimple case) and we have surjective homomorphisms
$$P_n(\lambda)\twoheadrightarrow \Delta_n(\lambda) \twoheadrightarrow L_n(\lambda)$$
for each $\lambda\in \Lambda_n$. Now the decomposition matrix
$D_{\lambda \mu}=[\Delta_n(\mu):L_n(\lambda)]$ has been determined by
the second author in  \cite{M} and its inverse is given in
\cite{CD}. This gives a closed form for the dimension of the simple
modules (although the coefficients of $(D_{\lambda \mu})^{-1}$ are not
easy to compute in practice). 

\subsection{} We have natural embeddings of the Brauer algebras
$$B_{n-1}(\delta)\hookrightarrow B_n(\delta)\hookrightarrow B_{n+1}(\delta)$$
by adding a vertical edge between the last vertex in each row of every Brauer diagram.
So we have corresponding restriction functors $\res_n : B_n(\delta)\Mod \rightarrow B_{n-1}(\delta)\Mod$ and induction functors $\ind_n : B_n(\delta)\Mod \rightarrow B_{n+1}(\delta)\Mod$. For partitions $\lambda$ and $\mu$, we write $\lambda \triangleright \mu$ (resp. $\lambda \triangleleft \mu$) if $\lambda$ is obtained from $\mu$ by adding (resp. removing) a box to its Young diagram. From \cite{DWH} we have exact sequences
\begin{equation}\label{resDelta}
0\rightarrow \oplus_{\mu\triangleleft \lambda} \Delta_{n-1}(\mu)\rightarrow \res_n \Delta_n(\lambda) \rightarrow \oplus_{\mu \triangleright \lambda}\Delta_{n-1}(\mu) \rightarrow 0, \quad \mbox{and}
\end{equation}
\begin{equation}\label{indDelta}
0\rightarrow \oplus_{\mu\triangleleft \lambda} \Delta_{n+1}(\mu)\rightarrow \ind_n \Delta_n(\lambda) \rightarrow \oplus_{\mu \triangleright \lambda}\Delta_{n+1}(\mu) \rightarrow 0
\end{equation}
where we define $\Delta_{n-1}(\mu)=0$ when $\mu\notin \Lambda_{n-1}$. 

The first objective of this paper is to describe the corresponding result for all simple modules. More precisely, we describe completely the module structure of $\res_nL_n(\lambda)$ for all $\lambda\in \Lambda_n$ and all non-negative integers $n$.

\subsection{} Walk bases for standard modules for generic values of $\delta$ were given by Leduc and Ram in \cite{LR}. Their construction relies on complex combinatorial objects such as the King polynomials (first introduced in \cite{EK}). These bases do not specialise to $\delta\in \mathbb{Z}$ (except in very low rank). However, it follows implicitly from \cite{RamWenzl92} that the truncation of these representations to certain \lq $\delta$- permissible up-down tableaux' gives bases for the \lq $\delta$-permissible' simple modules.

More recently \cite{CDM2} introduced a geometric characterisation of the representation theory of the Brauer algebra. It turns out that the combinatorics used in \cite{RamWenzl92} and \cite{LR} can be explained in a uniform and natural way in this geometrical context. In particular, we obtain a striking characterisation of the roots of the King polynomials.

Motivated by this, the second objective of this paper is to recast the
contruction of \cite{LR} in the geometrical setting. 
This provides a unification of the classical and modern approaches,
but is also done 
with a view to treating arbitrary simple modules (and other characteristics) in further work.

\subsection{Structure of the paper.} In Section 2, we recall and extend the necessary setup from \cite{CDM2} for the geometrical interpretation of the representation theory of the Brauer algebra $B_n(\delta)$. In Section 3, we recall the construction of weight diagrams and cap diagrams associated to every partition $\lambda$ and integer $\delta$ introduced in \cite{M} and \cite{CD}. We develop some of their properties and recall how these can be used to describe the blocks and the decomposition numbers for $B_n(\delta)$. In Section 4 we give a complete description of the module structure of the restriction from $B_n(\delta)$ to $B_{n-1}(\delta)$ of every simple module in terms of cap diagrams. We start Section 5 by recalling the representations constructed by Leduc and Ram for the generic Brauer algebra. We then give a geometric interpretation of the combinatorics used in their construction and deduce, by specialisation and truncation, explicit bases for an important class of simple modules.

\section{Geometrical setting}

\subsection{Euclidian space and reflection groups.} 

Consider the  
space $\RR^\NN$ consisting of all
(possibly infinite) $\RR$-linear combination of the symbols
$\epsilon_i$ ($i\in \NN$). For each $x=\sum_{i\in \NN}x_i \epsilon_i$,
write $x=(x_1, x_2, x_3, \ldots)$. 
The inner product on 
finitary elements in 
$\RR^\NN$ is given by $\langle \epsilon_i, \epsilon_j\rangle =
\delta_{ij}$. 
Now define $W$ to be the infinite reflection group on $\RR^\NN$ of type $D$ generated by  the reflections $(i,j)_\pm$ ($i<j\in \NN$)  where
$$(i,j)_\pm : (\ldots , x_i , \ldots , x_j, \ldots) \mapsto (\ldots ,
\pm x_j , \ldots , \pm x_i , \ldots ).$$
Define $W_+$ to be the subgroup generated by  $(i,j)_+$ ($i<j\in \NN$). So $W_+$ is the infinite reflection group on $\mathbb{R}^\mathbb{N}$ of type $A$.
The group $W$ (resp. $W_+$) defines a set $\HH$ (resp. $\HH_+$) of hyperplanes corresponding to
the reflections $(i,j)_\pm$ (resp. $(i,j)_+$) on $\RR^\NN$. 
We define the \emph{degree of singularity} of an element $x\in \mathbb{R}^\mathbb{N}$, denoted by ${\rm deg}(x)$, to be the number of hyperplanes in $\HH$ containing $x$, that is  the number of pairs of entries $x_i, x_j$ ($i< j$)
satisfying $x_i=\pm x_j$. 
The set of hyperplanes $\HH$ (resp. $\HH_+$)
subdivide $\RR^\NN$ into so-called $W$-alcoves, (resp. $W_+$-alcoves), see \cite{Humphreys90}. 
Define the element $\rho\in \mathbb{R}^\mathbb{N}$ by
$$\rho=(0,-1,-2,-3,\ldots ).$$
Now define the \emph{dominant chamber} $X_+$ to be the $W_+$-alcove containing $\rho$, and the \emph{fundamental alcove} to be the $W$-alcove containing $\rho$.

\subsection{Embedding of the Young graph.}

Recall that the Young graph $\mathcal{Y}$ has vertex set the set $\Lambda=\cup_n \Lambda_n$ of all partitions and has an edge between two partitions $\lambda$ and $\mu$ if $\lambda \triangleright \mu$ or $\lambda \triangleleft \mu$.

\begin{prop}\label{deltawalk} Let $\lambda\in \Lambda_n$ and $\delta\in \ZZ$. The dimension of $\Delta_n(\lambda)$ is given by the number of (undirected) walks of length $n$ starting at $\emptyset$ and ending at $\lambda$.
\end{prop}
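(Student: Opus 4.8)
The plan is to prove the statement by induction on $n$, using the restriction short exact sequence (\ref{resDelta}) to extract a dimension recurrence for $\Delta_n(\lambda)$, and then to check that the walk count obeys the very same recurrence. Write $d_n(\lambda)=\dim_\CC \Delta_n(\lambda)$ and let $w_n(\lambda)$ denote the number of undirected walks of length $n$ in $\mathcal{Y}$ from $\emptyset$ to $\lambda$. It then suffices to show that $d_n$ and $w_n$ satisfy identical recurrences and agree in the base case $n=0$.

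First I would derive the recurrence for $d_n(\lambda)$. Since restriction does not alter the underlying $\CC$-vector space, $\dim_\CC \res_n \Delta_n(\lambda)=d_n(\lambda)$, and, because dimension is additive on short exact sequences of finite-dimensional spaces, counting dimensions along (\ref{resDelta}) gives
\begin{equation*}
d_n(\lambda)=\sum_{\mu\triangleleft\lambda} d_{n-1}(\mu)+\sum_{\mu\triangleright\lambda} d_{n-1}(\mu),
\end{equation*}
with the convention $d_{n-1}(\mu)=0$ whenever $\mu\notin\Lambda_{n-1}$. Thus $d_n(\lambda)$ equals the sum of $d_{n-1}(\mu)$ over all $\mu$ adjacent to $\lambda$ in the Young graph $\mathcal{Y}$.

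Next I would set up the recurrence for $w_n(\lambda)$ by conditioning on the penultimate vertex of the walk. Any undirected walk of length $n$ from $\emptyset$ to $\lambda$ reaches $\lambda$ along a final edge from some neighbour $\mu$ of $\lambda$ in $\mathcal{Y}$; since edges join partitions differing by a single box, these neighbours are exactly the $\mu$ with $\mu\triangleleft\lambda$ or $\mu\triangleright\lambda$. Deleting the last edge sets up a bijection between such walks and walks of length $n-1$ from $\emptyset$ to one of these neighbours, so that $w_n(\lambda)=\sum_{\mu\triangleleft\lambda}w_{n-1}(\mu)+\sum_{\mu\triangleright\lambda}w_{n-1}(\mu)$. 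For the base case, $B_0(\delta)=\CC$ and $\Delta_0(\emptyset)$ is one-dimensional, so $d_0(\emptyset)=1$, matching the single empty walk $w_0(\emptyset)=1$; both quantities vanish for $\lambda\neq\emptyset$.

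The comparison then closes by induction. The only bookkeeping that needs care is the boundary behaviour encoded by the zero convention $\Delta_{n-1}(\mu)=0$ for $\mu\notin\Lambda_{n-1}$: this is matched exactly on the walk side, since a step adding a box to a partition of size $n$ produces a partition of size $n+1$ that cannot be reached from $\emptyset$ in $n-1$ steps, and more generally the parity and size constraints defining $\Lambda_n$ coincide precisely with those governing lengths and endpoints of walks (after $n$ steps the endpoint has size congruent to $n$ modulo $2$ and at most $n$). I expect this boundary matching to be the only genuinely delicate point; the core of the argument is simply that both quantities satisfy the same linear recurrence driven by adjacency in $\mathcal{Y}$.
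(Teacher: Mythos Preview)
Your proposal is correct and follows exactly the approach indicated in the paper: induction on $n$ using the restriction short exact sequence (\ref{resDelta}) to obtain a dimension recurrence, matched against the obvious last-step recurrence for walk counts with common base case at $n=0$. The paper's own proof is the one-line ``This follows from (\ref{resDelta}) by induction on $n$,'' of which your argument is a faithful unpacking.
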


\begin{proof}
This follows from (\ref{resDelta}) by induction on $n$. 
\end{proof}

For each $\delta\in \ZZ$, we will now define an embedding of the graph $\mathcal{Y}$ into $\mathbb{R}^\mathbb{N}$. This embedding is the key to all the geometrical tolls for Brauer algebra representation theory.

\newcommand{\ZZZ}{{\mathcal Z}}

Define $\ZZZ$ as the graph with vertex set $\RR^\NN$ and an edge
$(x,x')$ whenever $x-x' = \pm \epsilon_i$ for some $i$. 
For $x \in \RR^\NN$ define $\ZZZ(x)$ as the connected component of
$\ZZZ$ containing $x$. 
Define $\ZZZ_+$ as the subgraph of $\ZZZ$ on vertices in the dominant
chamber $X_+$. Define $\ZZZ_+(x)$ as the connected component of $\ZZZ_+$ containing $x$. 
A walk on $\ZZZ_+$ is called a dominant walk. 

For each partition $\lambda = (\lambda_1, \lambda_2, \lambda_3, \ldots )$ (where $\lambda_i=0$ for all $i>>0$), consider the transpose partition $\lambda^T=(\lambda^T_1, \lambda^T_2, \lambda^T_3, \ldots)$. For each $\delta\in \ZZ$, define $\rho_\delta \in \RR^\NN$ by
$$
\rho_\delta = (-\frac{\delta}{2}, -\frac{\delta}{2}-1,
-\frac{\delta}{2}-2, -\frac{\delta}{2}-3, \ldots )
= -\frac{\delta}{2}(1,1,1,...) +\rho.$$
Now define the embedding $e_\delta : \mathcal{Y} \rightarrow \ZZZ$ by setting for each vertex $\lambda\in \Lambda$,
\begin{equation}\label{embedding}
e_\delta(\lambda)=\lambda^T +\rho_\delta.
\end{equation}
Note that $e_\delta(\lambda)\in X_+$ for all $\lambda\in \Lambda$ and all $\delta\in \ZZ$. In fact we have the following important observation.

\begin{lem}\label{embeddingiso} For every $\delta\in \ZZ$ the map $e_\delta : \mathcal{Y} \rightarrow \ZZZ_+(\rho_\delta)$ is a graph isomorphism.
\end{lem}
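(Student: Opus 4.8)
The plan is to verify the three things that make up a graph isomorphism onto $\ZZZ_+(\rho_\delta)$: that $e_\delta$ is injective, that it both preserves and reflects adjacency, and that its image is precisely the connected component $\ZZZ_+(\rho_\delta)$. Injectivity is immediate, since transposition $\lambda\mapsto\lambda^T$ is a bijection of $\Lambda$ onto itself and translation by the fixed vector $\rho_\delta$ is injective; moreover this shows that the image of $e_\delta$ is exactly $\{\nu+\rho_\delta : \nu\in\Lambda\}$. Because transposition is a graph automorphism of $\mathcal{Y}$, I may and will phrase everything in terms of $\nu+\rho_\delta$ with $\nu$ ranging over all partitions.

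The heart of the argument is a single local lemma: for a partition $\nu$, an index $i$, and a sign, the vector $\nu\pm\epsilon_i$ satisfies $(\nu\pm\epsilon_i)+\rho_\delta\in X_+$ if and only if $\nu\pm\epsilon_i$ is again a partition, necessarily one obtained from $\nu$ by adding or removing a single box. To prove this I would use that $X_+$ is the $W_+$-chamber containing $\rho=(0,-1,-2,\ldots)$, i.e. $X_+=\{x : x_1>x_2>x_3>\cdots\}$; since $\rho_\delta=\rho-\tfrac{\delta}{2}(1,1,1,\ldots)$, the condition $(\nu\pm\epsilon_i)+\rho_\delta\in X_+$ translates (the diagonal shift is harmless) into the requirement that the integer sequence $\nu\pm\epsilon_i$ be \emph{weakly} decreasing. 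One then checks that a weakly decreasing sequence differing from a partition in a single coordinate by $\pm1$ is automatically nonnegative, hence a partition: in the ``add'' case nonnegativity is clear, and in the ``remove'' case a negative entry would have to sit above the equal nonnegative tail of $\nu$, contradicting weak decrease.

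With this lemma the remaining steps are formal. Adjacency is preserved: if $\lambda\triangleright\mu$ then $\lambda^T,\mu^T$ differ by a box, so $e_\delta(\lambda)-e_\delta(\mu)=\pm\epsilon_i$, and since both images lie in $X_+$ this is an edge of $\ZZZ_+$. Adjacency is reflected: if $e_\delta(\lambda),e_\delta(\mu)$ are joined in $\ZZZ_+$ their difference is $\pm\epsilon_i$, so $\lambda^T,\mu^T$ are partitions differing by a box, whence $\lambda,\mu$ are adjacent in $\mathcal{Y}$. Thus $e_\delta$ is an isomorphism onto the subgraph of $\ZZZ_+$ induced on its image. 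For surjectivity, note first that $\mathcal{Y}$ is connected (every partition is joined to $\emptyset$ by successively adding boxes) and $e_\delta$ carries edges to edges, so the image is connected and contains $e_\delta(\emptyset)=\rho_\delta$; hence the image lies inside $\ZZZ_+(\rho_\delta)$. The local lemma then shows the image is closed under passing to any $\ZZZ_+$-neighbour (each neighbour $x\pm\epsilon_i$ lying in $X_+$ is again of the form $\nu'+\rho_\delta$ with $\nu'$ a partition). A vertex set closed under adjacency is a union of connected components, so the image, being connected and containing $\rho_\delta$, must equal the whole component $\ZZZ_+(\rho_\delta)$.

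The main obstacle is the local lemma, and within it the ``remove a box'' case: the delicate point is to rule out the creation of a negative coordinate, which is exactly where the strict inequalities defining the chamber $X_+$ (as opposed to the weak inequalities defining a general closed dominant cone) are used. Once that combinatorial dichotomy is established, both directions of edge-correspondence and the closure property needed for surjectivity drop out simultaneously, and the rest is routine graph-theoretic bookkeeping.
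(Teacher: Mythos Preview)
The paper states this lemma without proof, calling it an ``important observation,'' so there is no argument to compare against. Your proof is correct and is the natural verification one would supply: injectivity is trivial, edge preservation and reflection follow directly from the definitions, and the only substantive point is the local lemma ensuring that every $\ZZZ_+$-neighbour of an image point is again an image point. Your handling of the delicate case --- that weak decrease of $\nu-\epsilon_i$ forces $\nu_i>0$, since $\nu_i=0$ would place $-1$ above the zero tail --- is exactly right, and the closure-under-adjacency argument for surjectivity is clean.
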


Using Lemma \ref{embeddingiso} we can rephrase Proposition \ref{deltawalk} as follows.

\begin{prop}
Fix $\delta\in  \ZZ$.
Points $x\in \RR^\NN$ reachable by undirected dominant walks on $\ZZZ$
of length $n$
from $\rho_\delta$ index the standard modules of $B_n(\delta)$. Moreover the number of undirected dominant walks on $\ZZZ$ from $\rho_\delta$ to $x$ gives the dimension of the corresponding standard module.
\end{prop}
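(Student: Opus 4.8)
The plan is to combine the three ingredients that immediately precede the statement: Proposition~\ref{deltawalk}, which identifies $\dim \Delta_n(\lambda)$ with the number of undirected walks of length $n$ on the Young graph $\mathcal{Y}$ from $\emptyset$ to $\lambda$; Lemma~\ref{embeddingiso}, which says that $e_\delta$ is a graph isomorphism from $\mathcal{Y}$ onto the connected component $\ZZZ_+(\rho_\delta)$; and the observation that $e_\delta(\emptyset)=\emptyset^T+\rho_\delta=\rho_\delta$. Since a graph isomorphism carries walks of length $n$ between two given vertices bijectively to walks of length $n$ between their images, transporting the walk-count of Proposition~\ref{deltawalk} across $e_\delta$ should do essentially all the work.

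First I would check that $e_\delta$ really is an isomorphism \emph{of the graph structure relevant to walks}, i.e.\ that an edge $\lambda\triangleright\mu$ or $\lambda\triangleleft\mu$ of $\mathcal{Y}$ corresponds to an edge of $\ZZZ_+$. By \eqref{embedding} we have $e_\delta(\lambda)-e_\delta(\mu)=\lambda^T-\mu^T$, and adding or removing a box from $\lambda$ changes exactly one part of $\lambda^T$ by $1$, so $e_\delta(\lambda)-e_\delta(\mu)=\pm\epsilon_i$ for some $i$. This is precisely the edge relation defining $\ZZZ$, and Lemma~\ref{embeddingiso} guarantees that the images stay in the dominant chamber $X_+$, hence the edges lie in $\ZZZ_+$. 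Consequently an undirected walk $\emptyset=\lambda^{(0)},\lambda^{(1)},\dots,\lambda^{(n)}=\lambda$ on $\mathcal{Y}$ maps to the undirected walk $\rho_\delta=e_\delta(\lambda^{(0)}),\dots,e_\delta(\lambda^{(n)})=e_\delta(\lambda)$ on $\ZZZ_+$, and injectivity/surjectivity of $e_\delta$ onto $\ZZZ_+(\rho_\delta)$ makes this correspondence a bijection.

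Next I would extract both assertions of the proposition. Setting $x=e_\delta(\lambda)$, the points reachable by dominant walks of length $n$ from $\rho_\delta$ are exactly the images $e_\delta(\lambda)$ for which a length-$n$ walk from $\emptyset$ to $\lambda$ exists in $\mathcal{Y}$; by Proposition~\ref{deltawalk} these are exactly the $\lambda$ with $\Delta_n(\lambda)\neq 0$, i.e.\ the $\lambda\in\Lambda_n$ indexing standard modules, so reachable points are in bijection with the standard modules of $B_n(\delta)$. For the dimension statement, the bijection on walks gives that the number of undirected dominant walks from $\rho_\delta$ to $x=e_\delta(\lambda)$ equals the number of undirected walks from $\emptyset$ to $\lambda$ on $\mathcal{Y}$, which by Proposition~\ref{deltawalk} is $\dim\Delta_n(\lambda)$.

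The one point requiring genuine care, and the step I expect to be the main obstacle, is the relationship between walks on the full graph $\ZZZ$ and walks on the subgraph $\ZZZ_+$ (dominant walks). The proposition as stated speaks of dominant walks on $\ZZZ$ from $\rho_\delta$, and one must confirm that staying dominant is automatic for walks issuing from $\rho_\delta$ whose every intermediate vertex is of the form $e_\delta(\mu)$---in other words that the constraint ``remain in $X_+$'' precisely recovers the image of $\mathcal{Y}$ and does not accidentally admit or exclude walks. This is exactly the content packaged into Lemma~\ref{embeddingiso} (that $\ZZZ_+(\rho_\delta)$ \emph{is} the isomorphic image of $\mathcal{Y}$, with no extra vertices), so the obstacle is really the bookkeeping of verifying that no non-dominant detour in $\ZZZ$ starting and ending at dominant points can contribute, which is handled by working throughout inside $\ZZZ_+$ rather than $\ZZZ$ and invoking the lemma.
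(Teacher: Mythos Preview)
Your proposal is correct and follows exactly the paper's approach: the paper does not even include a formal proof environment for this proposition, merely prefacing it with ``Using Lemma~\ref{embeddingiso} we can rephrase Proposition~\ref{deltawalk} as follows.'' Your worry in the final paragraph is slightly overstated, since by the paper's own definition a \emph{dominant walk} is precisely a walk on $\ZZZ_+$, so there is nothing to reconcile between $\ZZZ$ and $\ZZZ_+$ beyond what Lemma~\ref{embeddingiso} already packages.
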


\subsection{Representations of Temperley-Lieb algebras.}

To explain our geometrical programme in this paper we mention an analogous
situation in Lie theory --- specifically the 
representation theory of the Temperley--Lieb algebra $TL_n(\delta)$
(this is, via Schur--Weyl duality, 
simply the $sl_2$ case of a wider $sl_N$ phenomenon). We refer the reader to \cite[Section 12]{MM} and references therein for more details.

For $sl_2$ one should replace $\RR^\NN$ with $\RR$, replace
$\ZZZ$ with the corresponding graph (whose connected components are
simply chains of vertices)
and $W$ and $W_+$
with the reflections groups of type affine-$A_1$ and $A_1$
respectively, acting on $\RR$. In Figure \ref{fig:TLrep-walk}, we see
different sets of walks on a connected component of $\ZZZ$. The
hyperplanes or walls in $\HH$ are denoted by solid thick lines in
these pictures. 
There is, in principle, a representation for each choice of position
of the $A_1$-wall.
The relative position of the first affine wall depends on $\delta$ and
on the ground field. 
Figure \ref{fig:TLrep-walk}(a), 
shows all walks from the origin to a given point, which form a basis
for a module (isomorphic to a 
Young module in this case) when the $A_1$-wall is in generic position. 
Figure \ref{fig:TLrep-walk}(b) shows the basis of dominant walks
for a Temperley--Lieb Specht module, obtained when the $A_1$-wall is
in the `natural' position.
Figure \ref{fig:TLrep-walk}(c) then shows the subset of walks restricted to regular points, which we shall call \emph{restricted walks}
(in this example there is only a single such walk),  giving a basis for 
the simple head of the Specht module for a suitable $\delta$.
(Indeed
bases for arbitrary Temperley--Lieb  simples can be described using 
a refinement of the same
technology.)

\begin{figure}
\includegraphics[width=1.82in]{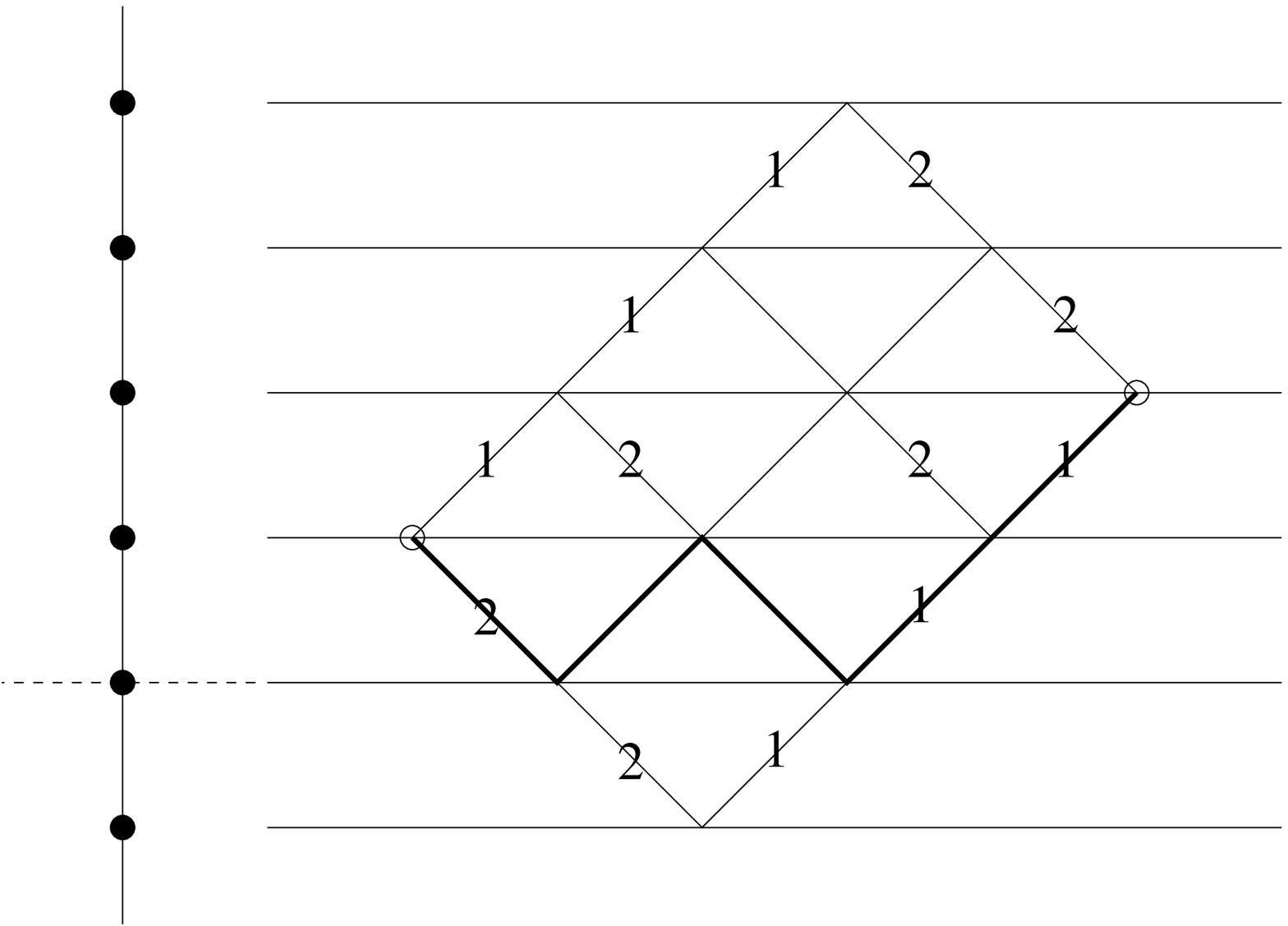}
\qquad
\includegraphics[width=1.82in]{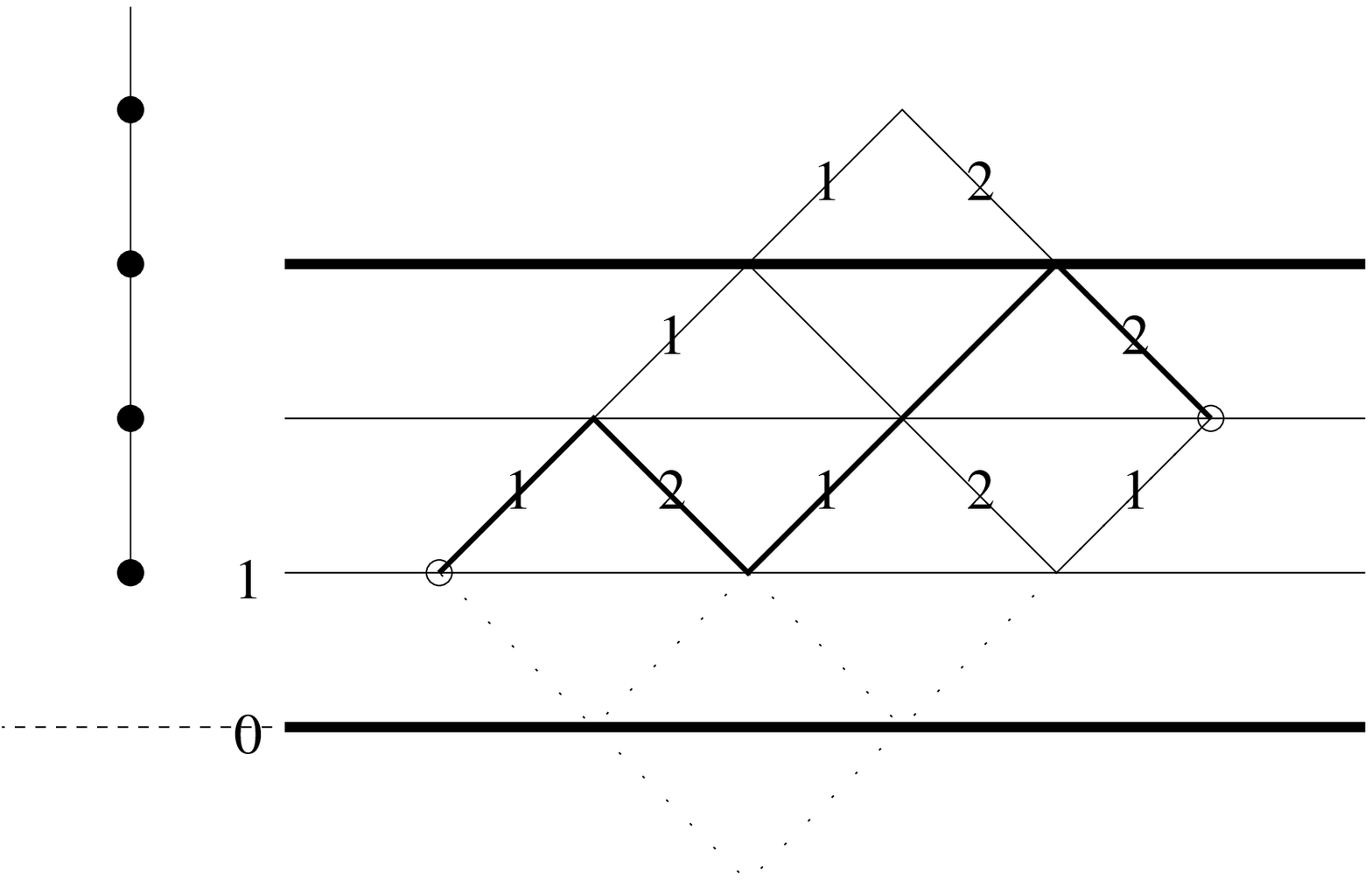}
\qquad
\includegraphics[width=1.82in]{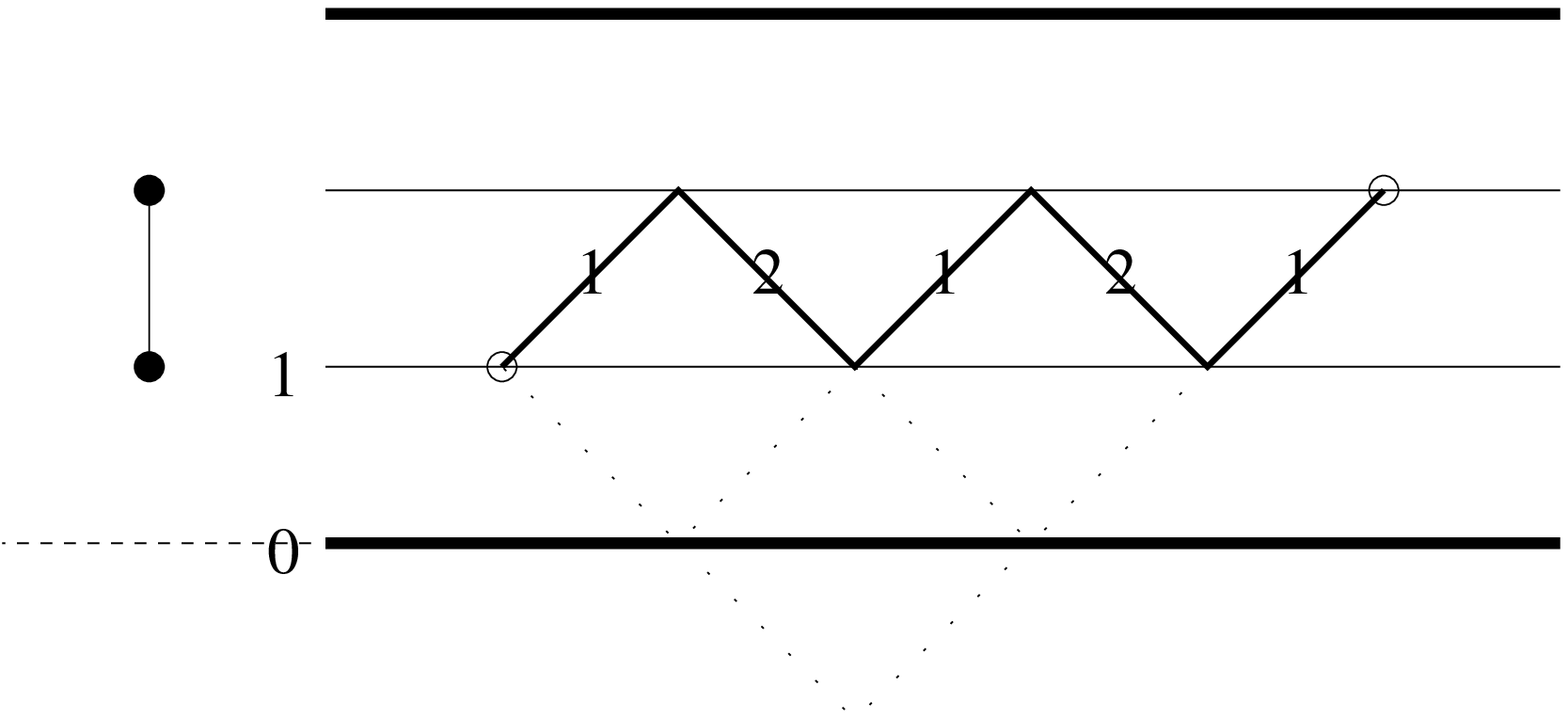}
\caption{(a), (b), (c) 
Walks on the $\ZZZ$-graph in type-$A_1$.
\label{fig:TLrep-walk}}
\end{figure}

Furthermore, the only off-diagonal entries in the
`unitary' representations of $TL_n(\delta)$ generators corresponding to
these walk bases 
are between (two) walks differing at a single point.
The difference is a reflection of the point 
in a certain hyperplane in $\RR$. The mixing
depends on the `height' of the hyperplane (i.e. the vertical axis in
Figure \ref{fig:TLrep-walk}), and vanishes at certain heights corresponding to affine walls 
(this is the `quantisation' of Young's famous hook-length orthogonal
form, expressed geometrically)
--- this explains why, in this case,  restricted walks decouple
from the rest. 


This example is nothing more than an analogy for us, since there is no corresponding piece of Lie
theory underlying our case. Nonetheless we will see in Section 5 that the features described above with the $W_+$-wall in the natural position also hold here. But first we need to describe the appropriate analogue of \lq restricted walks'.

\subsection{$\delta$-regularity and the $\delta$-restricted walks.}

By analogy with the Temperley-Lieb case, we might consider walks restricted to regular points. Note however that the degree of singularity of $\rho_\delta$ is given by
$${\rm deg}(\rho_\delta)=\left\{ \begin{array}{ll}
0 & \mbox{if $\delta
\geq 0$} \\
-m & \mbox{if $\delta = 2m<0$ or $2m+1<0$.}
\end{array}\right.$$

So we will first rescale the notion of regularity in a homogeneous way.
 
\begin{defn}\label{regsing} (i) For $x\in \mathbb{R}^\mathbb{N}$, we say that $x$ is $\delta$-regular if ${\rm deg}(x)={\rm deg}(\rho_\delta)$, and that $x$ is $\delta$-singular if ${\rm deg}(x)>{\rm deg}(\rho_\delta)$.\\
(ii) For a partition $\lambda\in \Lambda$ we define the $\delta$-degree of singularity of $\lambda$, denoted by ${\rm deg}_\delta(\lambda)$, by
$${\rm deg}_\delta(\lambda)={\rm deg}(e_\delta(\lambda)).$$
(iii) We say that $\lambda$ is $\delta$-regular (resp. $\delta$-singular) if $e_\delta(\lambda)$ is $\delta$-regular (resp. $\delta$-singular).
\end{defn}

Now we can define the restricted region in a homogeneous way as follows.

\begin{defn}\label{restrictedgraph}
(i) Define the $\delta$-restricted graph $\ZZZ_\delta$ to be the maximal connected subgraph of $\ZZZ_+$ containing $\rho_\delta$ such that all vertices are $\delta$-regular.\\
(ii) We define $\mathcal{Y}_\delta$ to be the inverse image of $\ZZZ_\delta$ under the map $e_\delta$, and we set $A_\delta$ to be the vertex set of $\mathcal{Y}_\delta$.\\
(iii) We call walks on $\ZZZ_\delta$, or $\mathcal{Y}_\delta$, $\delta$-restricted walks.
\end{defn}

\begin{rem} (i) Note that the set $A_\delta$ corresponds precisely to the set of $\delta$-permissible partitions defined in \cite{W}.\\
(ii) Note also that for $\delta 
\geq 0$ the set $A_\delta$ corresponds precisely to the intersection of the vertex set of $\ZZZ(\rho_\delta)$ with the fundamental alcove. This can be seen from the explicit description of $A_\delta$ given in Proposition \ref{A0}.
\end{rem}

\section{Weight diagrams, cap diagrams and decomposition numbers}

\subsection{Weight diagrams and blocks}
Fix $\delta\in \ZZ$. In this section we recall the construction of the weight diagram $x_\lambda$ associated to any partition $\lambda$ given in \cite{CD}.\\
Recall from Section 2.2 that $e_\delta(\lambda)$ is a strictly decreasing sequence in $\ZZ$ for $\delta$ even , and in $\frac{1}{2}+\ZZ$ for $\delta$ odd. 
The weight diagram $x_{\lambda}$ has
vertices indexed by $\NN_0$ if $\delta$ is even or by $\NN-\frac{1}{2}$ if $\delta$ is odd. Each vertex will be
labelled with one of the symbols $\circ$, $\times$, $\vee$, $\wedge$. For $e_\delta(\lambda)=(x_1, x_2, x_3, \ldots)$ define
$$I_{\wedge}(e_\delta(\lambda))=\{x_i: x_i>0\}\quad \text{and}\quad I_{\vee}(e_\delta(\lambda))=\{|x_i|:
x_i<0\}.$$ Now vertex $n$ in
the weight diagram $x_\lambda$ is labelled by
$$
\left\{\begin{array}{cl}
\circ &\text{if}\ n\notin I_{\vee}(e_\delta(\lambda))\cup I_{\wedge}(e_\delta(\lambda))\\
\times&\text{if}\ n\in I_{\vee}(e_\delta(\lambda))\cap I_{\wedge}(e_\delta(\lambda))\\
\vee&\text{if}\ n\in I_{\vee}(e_\delta(\lambda))\setminus I_{\wedge}(e_\delta(\lambda))\\
\wedge&\text{if}\ n\in I_{\wedge}(e_\delta(\lambda))\setminus
I_{\vee}(e_\delta(\lambda))
\end{array}\right.
$$
Moreover, if $x_i=0$ for some $i$ then we label vertex $0$ by either $\wedge$ or $\vee$ (this choice will not affect what follows). Otherwise we label vertex $0$ by  a $\circ$.

\begin{example}\label{empty}
We have
$$e_\delta(\emptyset)=(-\frac{\delta}{2}, -\frac{\delta}{2}-1,-\frac{\delta}{2}-2, -\frac{\delta}{2}-3, -\frac{\delta}{2}-4, \ldots ).$$
Write $\delta = 2m$ or $2m+1$ for some $m\in \mathbb{Z}$. If $m\geq 0$ then $x_\emptyset$ has the first $m$ vertices labelled by $\circ$ and all remaining vertices labelled by $\vee$. If $m<0$ and $\delta$ is odd then $x_\emptyset$ has the first $-m$ vertices labelled by $\times$ and all remaining vertices labelled by $\vee$. Finally if $m<0$ and $\delta$ is even then the first vertex is labelled by $\vee$ (or $\wedge$), the next $-m$ vertices are labelled by $\times$ and all remaining vertices are labelled by $\vee$. The picture is given in Figure \ref{emptypartition}. 
\end{example}

\begin{figure}[ht]
\includegraphics[width=7cm]{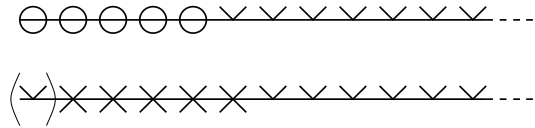}
\caption{The weight diagram $x_\emptyset$ for $\delta=2m$ or $2m+1$ with  $m=5$ or $m=-5$}
\label{emptypartition}
\end{figure}

\noindent It is immediate to see that the $\delta$-degree of singularity of $\lambda$ is equal to the number of $\times$'s in $x_\lambda$.  Note also that  the weight diagram  $x_{\lambda}$ is
labelled by $\vee$ for all  vertices $n>>0$. 

\medskip

\noindent It 
was shown in \cite{CDM2} that two simple 
$B_n(\delta)$-modules 
$L_n(\lambda)$ and $L_n(\mu)$ are in the same block if and only if $e_\delta(\lambda) \in We_\delta(\mu)$. Now it's easy to see that this is equivalent to saying that $x_\lambda$ is obtained from $x_\mu$ by repeatedly applying the following operation:\\
- swapping a $\vee$ and a $\wedge$,\\
- replacing two $\vee$'s (resp. $\wedge$'s) by two $\wedge$'s (resp. $\vee$'s).\\
For $\lambda\in \Lambda$ we define $\mathcal{B}(\lambda)$ to be the set of partitions in the $W$-orbit of $\lambda$ (via the embedding $e_\delta$). Then we have 
$$\mathcal{B}(\lambda)=\cup_m \mathcal{B}_m(\lambda)$$
where $\mathcal{B}_m(\lambda)$ is the block of $B_m(\delta)$ containing $\lambda$ and the union is taken over all $m$ such that $\lambda\in \Lambda_m$.

With $\delta$ still fixed, we now refine the order on $\Lambda$ given by the size of partitions to get the following \textbf{partial order} $\mathbf{\leq}$. We set $\lambda<\mu$ if $x_\mu$ is obtained from $x_\lambda$ by swapping a $\vee$ and a $\wedge$ so that the $\wedge$ moves to the right, or if $x_\mu$ contains a pair of $\wedge$'s instead of a corresponding pair of $\vee$'s in $x_\lambda$, and extending by transitivity. Note that if $\lambda \leq \mu$ then we have $|\lambda | \leq |\mu|$ and $\mu\in \mathcal{B}(\lambda)$.

\subsection{Some properties of the weight diagrams}

Here we give some properties of the weight diagrams which will be used in sections 4 and 5.

\begin{lem}\label{onebox}
Let $\lambda, \mu\in \Lambda$ with $\mu \triangleright \lambda$. Then we have either ${\rm deg}_\delta(\mu)={\rm deg}_\delta(\lambda)$ or ${\rm deg}_\delta (\mu)={\rm deg}_\delta(\lambda)\pm 1$. Moreover, the labels of the weight diagrams $x_\lambda$ and $x_\mu$ are equal everywhere except in at most two adjacent vertices, where we have one of the following configurations.

\smallskip

\noindent \textbf{Case I}: ${\rm deg}_\delta (\mu)={\rm deg}_\delta (\lambda)$ and either\\
(i) $x_\lambda = \times \vee$, $x_\mu = \vee \times$, \, or \qquad
(ii) $x_\lambda = \wedge \times$, $x_\mu = \times \wedge$, \, or \\
(iii) $x_\lambda = \circ \vee$, $x_\mu =  \vee\circ$, \, or \qquad 
(iv) $x_\lambda = \wedge \circ$, $x_\mu = \circ \wedge$,\, or\\
(v) $\delta$ is odd, and vertex $\frac{1}{2}$ is labelled by $\vee$  in $x_\lambda$ and by $\wedge$ in $x_\mu$.

\smallskip

\noindent \textbf{Case II}: ${\rm deg}_\delta(\mu)={\rm deg}_\delta(\lambda)+1$ and either \\
(vi) $x_\lambda = \wedge \vee$, $x_\mu = \circ \times$,\,  or \qquad 
(vii) $x_\lambda = \wedge \vee$, $x_\mu = \times \circ$.

\smallskip

\noindent \textbf{Case III}: ${\rm deg}_\delta(\mu)={\rm deg}_\delta(\lambda)-1$ and either\\
(viii) $x_\lambda = \circ \times$, $x_\mu = \vee \wedge$, \, or \qquad
(ix) $x_\lambda = \times \circ$, $x_\mu = \vee \wedge$.
\end{lem}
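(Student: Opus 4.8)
The plan is to reduce the statement to a single elementary move on the embedded point $e_\delta(\lambda)$ and then read off how that move looks in the folded weight-diagram picture. Writing $e_\delta(\lambda)_i = \lambda^T_i - \frac{\delta}{2} - (i-1)$, adding a box to $\lambda$ means adding a box to exactly one row of the transpose $\lambda^T$, say increasing $\lambda^T_c$ by one. Hence $e_\delta(\mu)$ differs from $e_\delta(\lambda)$ in the single coordinate $c$, where a value $a := e_\delta(\lambda)_c$ is replaced by $a+1$. The box being addable is precisely the condition that the new sequence is still strictly decreasing, and this guarantees that $a+1$ was not already a coordinate of $e_\delta(\lambda)$; so at the level of the multiset of coordinates we simply delete $a$ and insert $a+1$.

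First I would translate this into the weight diagram. A vertex $n$ records only whether the values $+n$ and/or $-n$ occur among the coordinates, so the deletion of $a$ and insertion of $a+1$ can change the label at the vertices $|a|$ and $|a+1|$ and nowhere else. Since $|a|$ and $|a+1|$ differ by one --- or coincide, which happens exactly when $\delta$ is odd and $a = -\frac{1}{2}$ --- the two diagrams agree outside at most two adjacent vertices, which is the first assertion. Next I would establish the degree statement by a monotonicity observation, using that $\deg_\delta$ equals the number of $\times$'s. Deleting the value $a$ turns vertex $|a|$ from $\times$ into $\vee$ or $\wedge$, or from $\wedge$ or $\vee$ into $\circ$, so it can only destroy a $\times$ (never create one); dually, inserting $a+1$ can only create a $\times$ at vertex $|a+1|$ or turn a $\circ$ into $\wedge$ or $\vee$. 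Hence the number of $\times$'s changes by $+1$, $0$, or $-1$, giving the trichotomy of Cases I--III.

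Finally I would enumerate the configurations by splitting on the signs of $a$ and $a+1$. When both are positive the occupied position hops one step to the right on the positive side; the four possibilities, according to whether $-a$ and whether $-(a+1)$ already occur, yield exactly (i), (iv), (vi), (ix). When both are negative the folded picture is a hop to the left on the negative side, and the same four possibilities yield (ii), (iii), (vii), (viii); in each case the $\times$-count computed above places it in the correct Case I/II/III. The boundary moves $a = 0 \mapsto 1$ and $a = -1 \mapsto 0$ (for $\delta$ even) and $a = -\frac{1}{2} \mapsto \frac{1}{2}$ (for $\delta$ odd) I would treat separately; the only real point is the free labelling of vertex $0$. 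Choosing it to be $\wedge$ when $0$ is occupied makes $a = 0 \mapsto 1$ fall under (vi)/(iv), and choosing $\vee$ makes $a = -1 \mapsto 0$ fall under (viii)/(iii); these two moves never occur for the same partition (one needs $0$ occupied, the other needs it empty), so the freedom is harmless, while the odd crossing move is the single-vertex change (v).

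The main obstacle is not conceptual but bookkeeping: carrying out the sign analysis and the folding without sign errors, and handling the vertex-$0$ convention so that the boundary moves genuinely reproduce the generic patterns rather than spurious ones.
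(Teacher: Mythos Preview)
Your proposal is correct and follows exactly the approach the paper has in mind: the paper's proof is the single sentence ``This follows directly from the definition of the weight diagram,'' and what you have written is precisely the unpacking of that sentence---tracking the single coordinate $a\mapsto a+1$ through the folding $n\mapsto |n|$ and reading off the label changes. Your sign/boundary bookkeeping (including the handling of the free label at vertex~$0$ and the odd crossing at $a=-\tfrac12$) is accurate, so there is nothing to add.
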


\begin{proof}
This follows directly from the definition of the weight diagram.
\end{proof}

\noindent The next lemma will play a key role in what follows.

\begin{lem}\label{key}
Let $\delta =2m$ or $2m+1$ for some $m\in \mathbb{Z}$ and let $\lambda$ be any partition. Then we have
$$\#(\mbox{$\circ$ in $x_\lambda$})-\#(\mbox{$\times$ in $x_\lambda$})=m.$$
\end{lem}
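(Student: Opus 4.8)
The plan is to show that the integer $N(\lambda) := \#(\circ\text{ in }x_\lambda) - \#(\times\text{ in }x_\lambda)$ is independent of $\lambda$, and then to evaluate it at the empty partition. First I would note that $N(\lambda)$ is well defined: since $e_\delta(\lambda)=\lambda^T+\rho_\delta$ has only finitely many positive entries, while its negative entries (a finite shift of the tail of $\rho$) take every sufficiently small value in the relevant lattice, every vertex $n\gg 0$ of $x_\lambda$ lies in $I_\vee\setminus I_\wedge$ and is labelled $\vee$. Hence only finitely many vertices carry a $\circ$ or a $\times$, so the difference $N(\lambda)$ makes sense.

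For the base case I would read off $x_\emptyset$ directly from Example \ref{empty} and compute $N(\emptyset)$ in each of its three regimes. When $m\geq 0$ the diagram has exactly $m$ vertices labelled $\circ$ and none labelled $\times$, giving $N(\emptyset)=m$. When $m<0$, in both the $\delta$ odd and the $\delta$ even cases the diagram has exactly $-m$ vertices labelled $\times$ and none labelled $\circ$, giving $N(\emptyset)=0-(-m)=m$. Thus $N(\emptyset)=m$ in all cases.

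The heart of the argument is the invariance of $N$ under adding a box. Since the Young graph $\mathcal{Y}$ is connected---every partition is obtained from $\emptyset$ by successively adding boxes---it suffices to prove $N(\lambda)=N(\mu)$ whenever $\mu\triangleright\lambda$. Here I would invoke Lemma \ref{onebox}: $x_\lambda$ and $x_\mu$ agree outside at most two adjacent vertices, which change according to one of the configurations (i)--(ix). Running through these, the Case I moves and the move (v) merely permute or relabel symbols among $\{\vee,\wedge,\times,\circ\}$ without changing the total number of $\circ$'s or of $\times$'s; the Case II moves (vi) and (vii) raise both $\#\circ$ and $\#\times$ by exactly one; and the Case III moves (viii) and (ix) lower both by exactly one. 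In every case the difference $N=\#\circ-\#\times$ is preserved, and together with $N(\emptyset)=m$ this yields the claim.

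I do not anticipate a genuine difficulty here. The only points needing care are the finiteness observation that makes $N(\lambda)$ meaningful---this is what guarantees that the purely local change described by Lemma \ref{onebox} controls the global count, the untouched infinite tail contributing only $\vee$'s---and the routine but complete verification of all nine configurations, which is the natural place for a slip.
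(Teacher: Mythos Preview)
Your proposal is correct and follows essentially the same approach as the paper: induction on $|\lambda|$, with the base case $\lambda=\emptyset$ read off from Example~\ref{empty} and the inductive step handled by checking that each configuration in Lemma~\ref{onebox} preserves $\#\circ-\#\times$. Your write-up is more detailed (explicit finiteness remark, explicit case check), but the argument is the same.
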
 

\begin{proof} We will prove this by induction on the size of $\lambda$.
The result is clearly true for $\lambda = \emptyset$ by Example \ref{empty}.
Now suppose that the result holds for a partition $\lambda$ and let $\mu$ be a partition obtained by adding one box to $\lambda$. Then it is easy to observe from Lemma \ref{onebox} that the result holds for $\mu$. 
\end{proof}

We now explain two ways of recovering the (Young diagram of) the partition $\lambda$ from its weight diagram $x_\lambda$.

First ignore all the $\circ$'s (but not their positions). Now read all the symbols below the line successively from right to left, then all the symbols above the line successively from left to right as illustrated in Figure \ref{weighttranspose}.

\begin{figure}[ht]
\includegraphics[width=10cm]{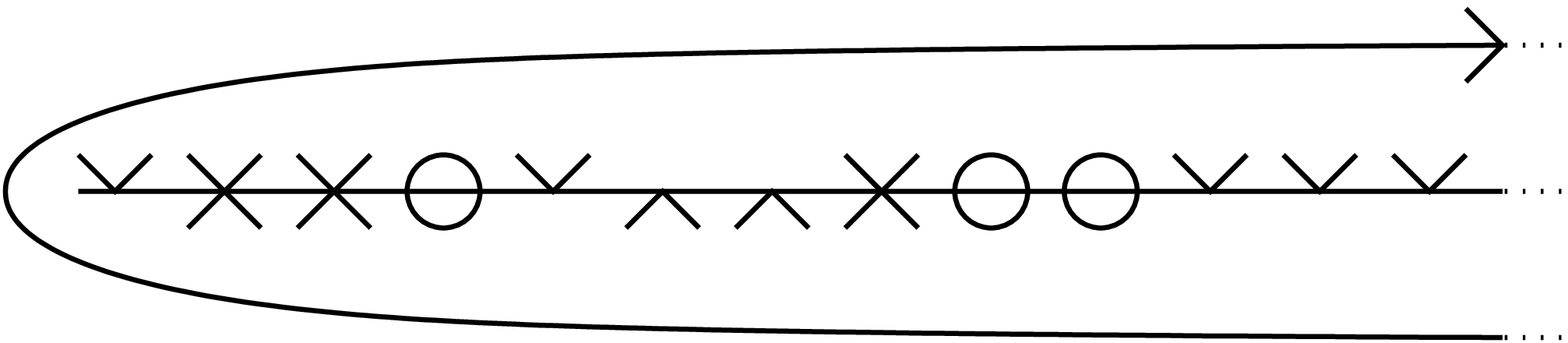}
\caption{}
\label{weighttranspose}
\end{figure}

Now it's easy to see that the Young diagram of the partition $\lambda$ can be drawn as follows. At each entry, if there is no symbol go one step up, and if there is a symbol then go one step to the right. Note that the weight diagram $x_{\lambda}$ ends with infinitely many $\vee$'s. So we always start with infinitely many steps up (the left edge of the quadrant in which $\lambda$ lives) and end with infinitely many steps to the right (the top edge of the quadrant in which $\lambda$ lives), as expected.
Note also that, for $\delta$ even, the entry indexed by $0$ should only be read once as a step up if it is labeled by $\circ$ and a step to the right otherwise. 

\bigskip

Alternatively, one could read the entries from right to left above the line first and then from left to right below the line, as illustrated in Figure \ref{weightpartition}.

\begin{figure}[ht]
\includegraphics[width=10cm]{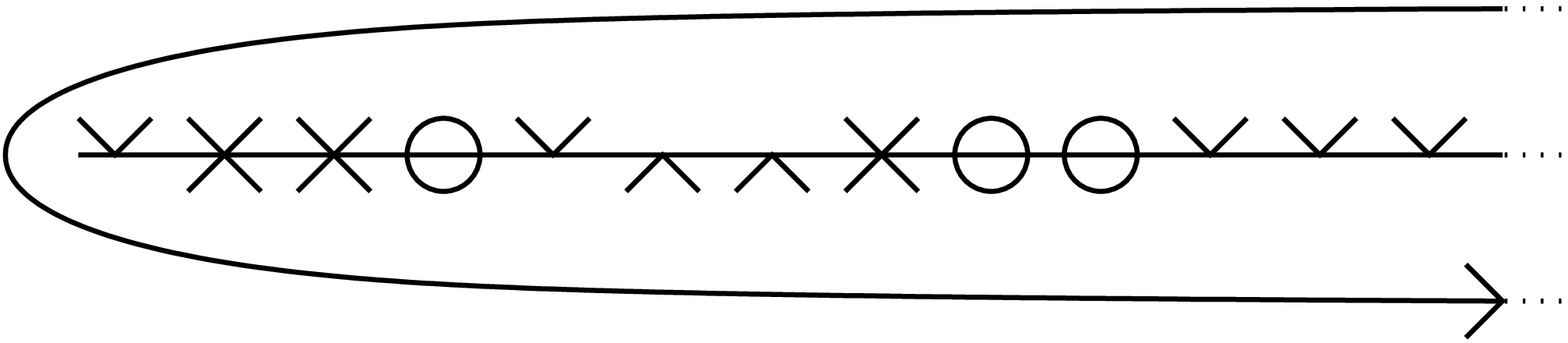}
\caption{}
\label{weightpartition}
\end{figure}

In this case, the Young diagram can be drawn by going one step to the left if there is a symbol and one step down otherwise.

The partition corresponding to the weight diagram in Figure \ref{weighttranspose} and \ref{weightpartition} is given by $\lambda=(10,10,9,9,8,5,3,3)$, see Figure \ref{partition}.

\begin{figure}[ht]
\includegraphics[width=10cm]{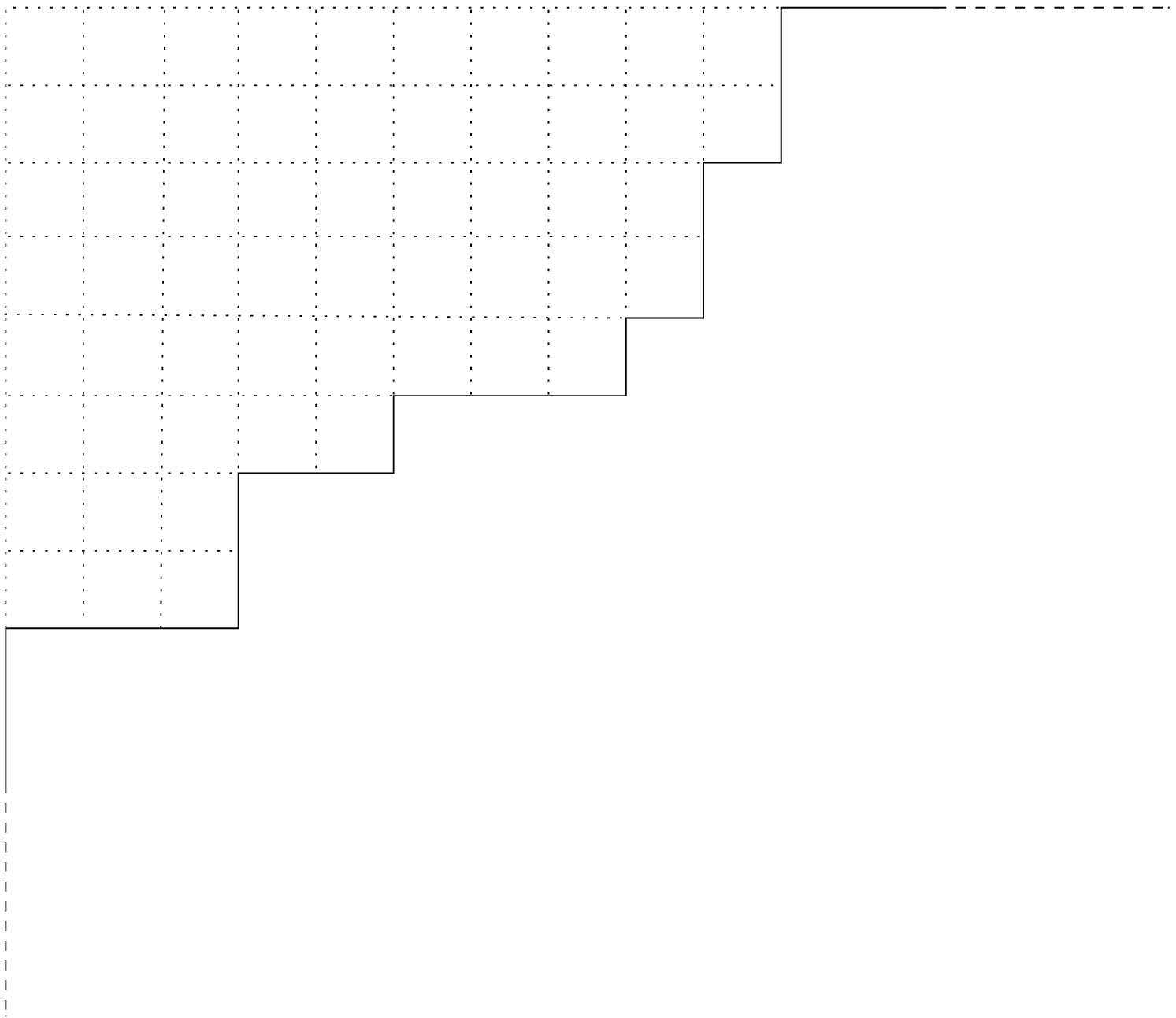}
\caption{}
\label{partition}
\end{figure}

\begin{prop}\label{ylambda} Let $\delta\in \mathbb{Z}$ and $\lambda \in \Lambda$.\\
(i) There is a bijection between the set of $\times$'s in $x_\lambda$ one-to-one and the set of all pairs $(i>j)\in \mathbb{N}^2$ satisfying
$$\lambda^T_i +\lambda^T_j-i-j+2=\delta.$$
Moreover, we have that $(i,j)\in [\lambda]$ if and only if 
\begin{equation}\label{admissibletimes}
\#(\mbox{$\times$'s left of $n$})-\#(\mbox{$\circ$'s left of $n$})<\left\{ \begin{array}{ll} m-1 & \mbox{if $\delta=2m$}\\ m & \mbox{if $\delta=2m+1$} \end{array}\right.
\end{equation}\\
(ii) There is a bijection between the set of $\circ$'s in $x_\lambda$ and the set of all pairs $(i\leq j)\in \mathbb{N}^2$ satisfying
$$-\lambda_i-\lambda_j+i+j=\delta.$$ 
Moreover, we have that $(i,j)\in [\lambda]$ if and only if 
\begin{equation}\label{admissiblecirc}
\#(\mbox{$\times$'s left of $n$})-\#(\mbox{$\circ$'s left of $n$})>\left\{ \begin{array}{ll} m-1 & \mbox{if $\delta=2m$}\\ m & \mbox{if $\delta=2m+1$} \end{array}\right.
\end{equation}
\end{prop}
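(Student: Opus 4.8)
The plan is to treat each half of the proposition in two stages: first establish the bijection (the defining equation), and then pin down the membership condition by a counting argument in the ``window'' $(-n,n)$. Throughout I call a value \emph{occupied} if it occurs among the coordinates $x_i$ of $e_\delta(\lambda)$.

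For the bijections I would write the coordinates explicitly. By (\ref{embedding}) the $i$-th coordinate of $e_\delta(\lambda)$ is $x_i=\lambda^T_i-i+1-\tfrac{\delta}{2}$, and this sequence is strictly decreasing. A $\times$ at vertex $n$ means precisely that both $n$ and $-n$ are occupied, say $x_a=n$ and $x_b=-n$; strict monotonicity forces $a<b$ and makes the pair unique. The relation $x_a+x_b=0$ then unwinds directly to $\lambda^T_a+\lambda^T_b-a-b+2=\delta$, and setting $(i,j)=(b,a)$ gives the bijection for part (i). For part (ii), a $\circ$ at vertex $n$ means that neither $n$ nor $-n$ is occupied; here I would pass to the transpose via the standard conjugation duality of bead sets, under which a gap of $\{x_i\}$ at a value $v$ corresponds to a bead of the $\lambda$-set $\{\lambda_s-s\}$ at $-v-\tfrac{\delta}{2}$. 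The two gaps at $\pm n$ thus yield indices $p<q$ with $\lambda_p-p=n-\tfrac{\delta}{2}$ and $\lambda_q-q=-n-\tfrac{\delta}{2}$; adding these gives $-\lambda_p-\lambda_q+p+q=\delta$, and setting $(i,j)=(p,q)$ gives part (ii).

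For the membership conditions the key is to count the occupied values (in the $\times$ case), respectively the gaps (in the $\circ$ case), lying strictly inside $(-n,n)$, in two ways. On one hand, in the $\times$ case the occupied values in $(-n,n)$ are exactly $x_{a+1},\dots,x_{b-1}$, so there are $b-a-1$ of them; in the $\circ$ case the gaps in $(-n,n)$ correspond to the indices strictly between $p$ and $q$, giving $q-p-1$. On the other hand, the same quantity is read off from the labels of the vertices left of $n$: each $\times$ accounts for two occupied values $\pm k$, each $\wedge$ or $\vee$ for one, each $\circ$ for none (and dually for gaps). Writing $v(n)$ for the number of vertices left of $n$ and eliminating $\#(\wedge)+\#(\vee)$ via $\#(\wedge)+\#(\vee)=v(n)-\#(\circ)-\#(\times)$, I obtain in the $\times$ case the clean identity
$$\#(\times\text{ left of }n)-\#(\circ\text{ left of }n)=(b-a-1)-v(n),$$
and an analogous one in the $\circ$ case. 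Since $x_a=n$ gives $\lambda^T_a=n+a-1+\tfrac{\delta}{2}$, the box $(b,a)$ lies in $[\lambda]$ exactly when $\lambda^T_a\ge b$, i.e. $b\le n+a-1+\tfrac{\delta}{2}$; substituting the identity above turns this inequality precisely into (\ref{admissibletimes}). The $\circ$ case runs the same way, with $\lambda_p\ge q$ turning into (\ref{admissiblecirc}).

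The main obstacle is bookkeeping rather than conceptual: the constants $m-1$ and $m$, and the split between $\delta$ even and $\delta$ odd, must be tracked carefully. This is where the two parities genuinely differ: for $\delta$ even one has $v(n)=n$ and the vertex $0$ carries only the single value $0$, so a $\circ$ there contributes one gap rather than two, producing an extra $\mp 1$ in the $\circ$ identity; for $\delta$ odd there is no vertex $0$ and $v(n)=n-\tfrac12$, while $\tfrac{\delta}{2}$ shifts by $\tfrac12$. I would use Lemma \ref{key} and Example \ref{empty} as consistency checks and confirm that the boundary convention at vertex $0$ does not affect the counts. Once these vertex-$0$ and parity adjustments are made, the inequalities fall out exactly as stated.
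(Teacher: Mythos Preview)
Your proposal is correct, and it takes a genuinely different route from the paper for the ``membership'' halves of (i) and (ii) and for the bijection in (ii). The paper's proof does not use the bead-set/gap duality or the window-counting idea at all: instead it relies on the graphical reading of $\lambda$ from $x_\lambda$ described in Figures~\ref{weighttranspose}--\ref{weightpartition}, interpreting each $\circ,\times,\vee,\wedge$ as a step (up/right or left/down), and then writes $\lambda^T_j$, $i$, $\lambda_i$, $\lambda_j$, $j$ explicitly as sums of symbol counts. The key simplification in the paper is always Lemma~\ref{key} ($\#\circ-\#\times=m$), which collapses each of these expressions to the desired inequality; the parity correction $\delta_{\delta,2m}$ enters through the double-counting of vertex $0$ in the step-reading. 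Your argument, by contrast, counts occupied values (resp.\ gaps) in the open window $(-n,n)$ in two ways, which yields the identity $\#(\times)-\#(\circ)=(b-a-1)-v(n)$ directly without invoking Lemma~\ref{key}; the bead-set duality replaces the paper's explicit step-counting for part (ii). Your approach is more uniform across (i) and (ii) and arguably more conceptual; the paper's is more hands-on but makes the parity bookkeeping completely mechanical once the step-reading is set up. Both lead to the same constants, and your anticipated $\mp 1$ correction in the $\circ$ identity at vertex $0$ is exactly the $-\delta_{\delta,2m}$ term that appears in the paper's computation of $j$.
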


\begin{proof}
(i) The first part follows from the definition of $x_\lambda$.\\
Now we have $(i,j)\in [\lambda]$ if and only if $i\leq \lambda^T_j$, that is $\lambda^T_j-i\geq 0$. 

\begin{figure}[ht]
\includegraphics[width=10cm]{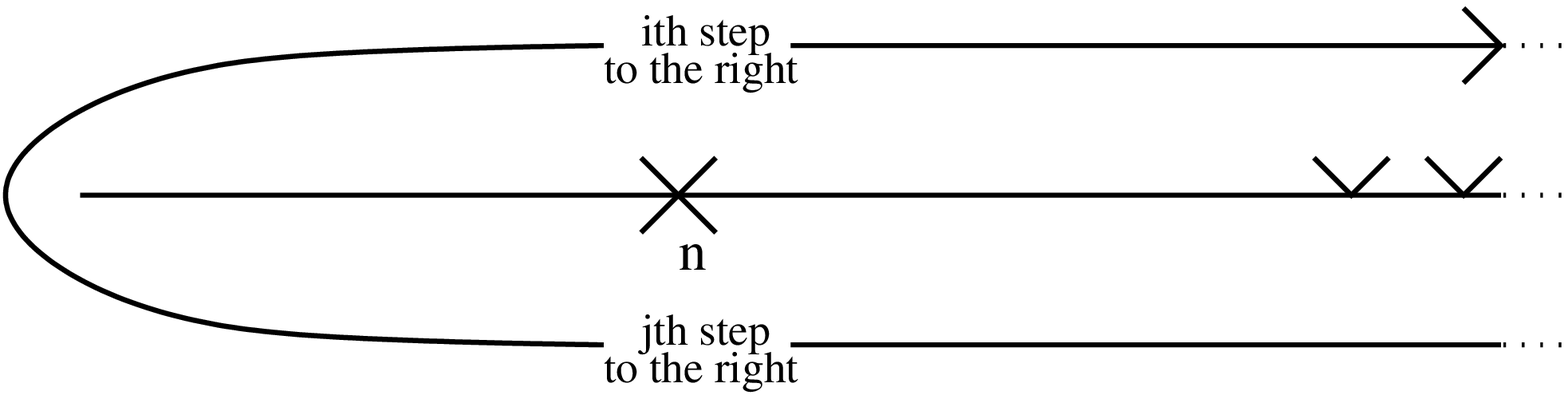}
\caption{}
\label{times}
\end{figure}
Reading the partition from the weight diagram as in the Figure \ref{times} we obtain
\begin{eqnarray*}
\lambda^T_j&=& \# (\mbox{steps up after $j$})\\
&=& \#(\mbox{$\circ$'s left of $n$}) + \#(\mbox{$\vee$'s left of $n$}) + \#(\mbox{$\circ$'s}) + \#(\mbox{$\wedge$'s}) -\delta_{\delta, 2m}, \qquad \mbox{and}\\
i&=& \# (\mbox{steps to the right up to (and including) $i$})\\
&=& \#(\mbox{$\times$'s}) + \#(\mbox{$\wedge$'s}) + \#(\mbox{$\vee$'s left of $n$}) + \#(\mbox{$\times$'s left of $n$}) +1.
\end{eqnarray*}
So we have
\begin{eqnarray*}
\lambda^T_j-i &=& \#(\mbox{$\circ$'s}) - \#(\mbox{$\times$'s}) +\#(\mbox{$\circ$'s left of $n$}) - \#(\mbox{$\times$'s left of $n$})-1-\delta_{\delta,2m}\\
&=& m+\#(\mbox{$\circ$'s left of $n$}) - \#(\mbox{$\times$'s left of $n$})-1-\delta_{\delta,2m} \qquad \mbox{using Lemma \ref{key}}.
\end{eqnarray*}
Hence we have that $\lambda^T_j-i\geq 0$ if and only if
$$\#(\mbox{$\times$'s left of $n$}) - \#(\mbox{$\circ$'s left of $n$}) <m-\delta_{\delta, 2m}$$
as required.

(ii) Suppose that $n$ is a vertex labelled with $\circ$ in $x_\lambda$. Then reading the partition $\lambda$ as in Figure \ref{ijstepdown}, this corresponds to the i-th and j-th steps down. Note that when $n=0$ or $\frac{1}{2}$ we have $i=j$.

\begin{figure}[ht]
\includegraphics[width=10cm]{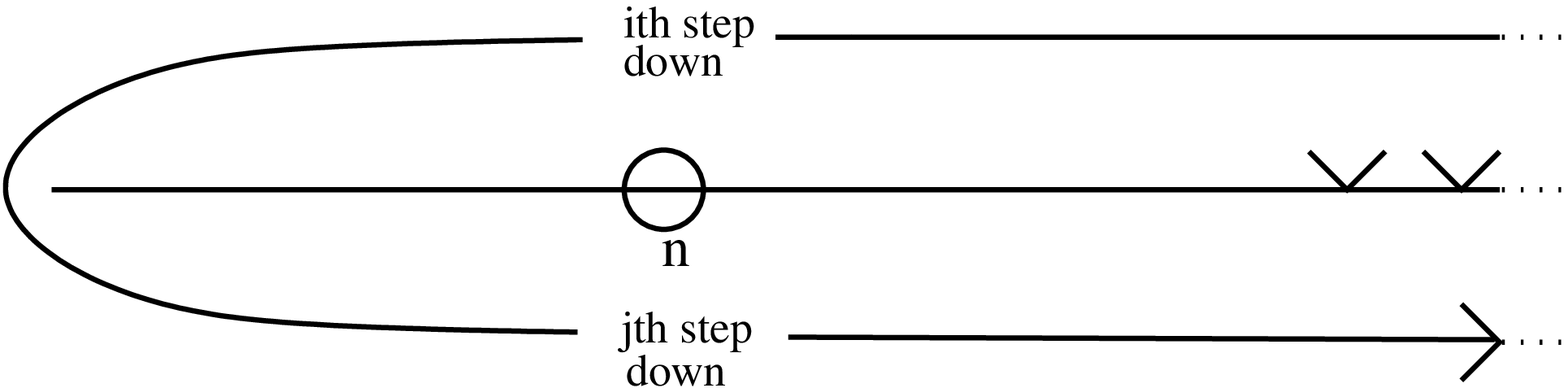}
\caption{}
\label{ijstepdown}
\end{figure}

We want to show first that $n=\lambda_i-i+\frac{\delta}{2}=-(\lambda_j-j+\frac{\delta}{2})$.
Now if we let $k$ be the number of vertices strictly to the left of $n$, then we have that $k=n$ for $\delta$ even and $k=n-\frac{1}{2}$ for $\delta$ odd. Thus if we write $\delta=2m$ or $2m+1$ for some $m\in \ZZ$ then this is equivalent to showing that
\begin{equation}\label{lambdai}
\lambda_i-i=k-m, 
\end{equation}
and
\begin{equation}\label{lambdaj}
\lambda_j-j=\left\{ \begin{array}{ll} -k-m & \mbox{if $\delta=2m$}\\
-k-m-1 & \mbox{if $\delta =2m+1$}\end{array}\right.
\end{equation}
Now we have
\begin{eqnarray*}
\lambda_i &=& \#(\mbox{steps to the left after the ith step down})\\
&=& \# (\mbox{$\times$'s left of $n$}) + \# (\mbox{$\vee$'s left of $n$})
 + \#(\mbox{$\wedge$'s})+\# (\mbox{$\times$'s}), \quad \mbox{and}\\
i &=& \#(\mbox{steps down up to and including $i$})\\
&=& \#(\mbox{$\circ$'s right of $n$}) + \#(\mbox{$\wedge$'s right of $n$}) +1
\end{eqnarray*}
So this gives
\begin{eqnarray*}
\lambda_i-i &=& \# (\mbox{$\times$'s left of $n$}) +\# (\mbox{$\vee$'s left of $n$}) + \# (\mbox{$\wedge$'s left of $n$}) + \# (\mbox{$\circ$'s left of $n$})\\
&& + \# (\mbox{$\times$'s }) - \# (\mbox{$\circ$'s right of $n$}) - \# (\mbox{$\wedge$'s right of $n$}) \\
&& +\# (\mbox{$\wedge$'s right of or at $n$}) -\# (\mbox{$\circ$'s left of $n$}) -1\\
&=& k+\# (\mbox{$\times$'s}) - \#(\mbox{$\circ$'s})\\
&=& k-m \qquad \mbox{using Lemma \ref{key}}.
\end{eqnarray*}
This proves (\ref{lambdai}).\\
Similarly we have
\begin{eqnarray*}
\lambda_j &=& \#(\mbox{steps to the left after the jth step down})\\
&=& \# (\mbox{$\times$'s right of $n$}) + \# (\mbox{$\wedge$'s right of $n$})
  \quad \mbox{and}\\
j &=& \#(\mbox{steps down up to and including $j$})\\
&=& \#(\mbox{$\circ$'s}) + \#(\mbox{$\wedge$'s }) + \#(\mbox{$\vee$'s left of $n$}) +\#(\mbox{$\circ$'s left of $n$}) + 1 - \delta_{\delta,2m}
\end{eqnarray*}
where the term -$\delta_{\delta,2m}$ cancels the double counting of the vertex indexed by $0$ in the even $\delta$ case.\\
So this gives
\begin{eqnarray*}
\lambda_j-j &=& -\# (\mbox{$\vee$'s  left of $n$}) -\# (\mbox{$\circ$'s left of $n$}) - \# (\mbox{$\wedge$'s left of $n$}) - \# (\mbox{$\times$'s left of $n$})\\
&& + \# (\mbox{$\times$'s }) - \# (\mbox{$\circ$'s }) -1 +\delta_{\delta,2m}\\
&=& \left\{ \begin{array}{ll} -k -m & \mbox{if $\delta=2m$}\\ -k-m-1 & \mbox{if $\delta=2m+1$} \end{array}\right. \qquad \mbox{using Lemma \ref{key}}
\end{eqnarray*} 
proving (\ref{lambdaj}).\\
Moreover, we have $(i,j)\in [\lambda]$ if and only if $j\leq \lambda_i$, that is $\lambda_i-j\geq 0$. Now we get
\begin{eqnarray*}
\lambda_i-j &=& \#(\mbox{$\times$'s}) - \#(\mbox{$\circ$'s}) +\#(\mbox{$\times$'s left of $n$}) - \#(\mbox{$\circ$'s left of $n$})-\delta_{\delta,2m+1}\\
&=& -m+\#(\mbox{$\times$'s left of $n$}) - \#(\mbox{$\circ$'s left of $n$})-\delta_{\delta,2m+1} \qquad \mbox{using Lemma \ref{key}}.
\end{eqnarray*}
Hence we have that $\lambda_i-j\geq 0$ if and only if
$$\#(\mbox{$\times$'s left of $n$}) - \#(\mbox{$\circ$'s left of $n$}) >m-\delta_{\delta, 2m}$$
as required.
\end{proof}

\subsection{Cap diagrams and decomposition numbers}

In this section we associate an oriented cap diagrams $c_\lambda$ to any partition $\lambda$ (and $\delta\in \mathbb{Z}$) as in \cite{CD}. Note that this is slight reformulation of the Temperley-Lieb half diagrams associated to $\lambda$ given in \cite{M}, but we keep the information about the positions of the $\times$'s and $\circ$'s. A similar construction was also given in \cite{Le}.

First, draw the vertices of $x_\lambda$ on the horizontal edge of the NE quadrant of the plane.
Now, in $x_{\lambda}$ find a pair of vertices labelled $\vee$ and $\wedge$
in order from left to right that are neighbours in the sense that
there are only separated by $\circ$s, $\times$s or vertices already joined by a cap. Join this pair of vertices together
with a cap. Repeat this process until there are no more such $\vee$
$\wedge$ pairs. (This will occur after a finite number of steps.)

Ignoring all $\circ$s, $\times$s and vertices on a cap, we are left
with a sequence of a finite number of $\wedge$s followed by an
infinite number of $\vee$s. Starting from the leftmost $\wedge$, join
each $\wedge$ to the next from the left which has not yet been used,
by a cap touching the vertical boudary of the NE quadrant, without crossing any other caps. If there is a free
$\wedge$ remaining at the end of this procedure, draw an infinite ray
up from this vertex, and draw infinite rays from each of the remaining
$\vee$s.

\noindent Examples of this construction are given in Figure \ref{caps}.

Here we have drawn the \lq curls' from \cite{CD} as caps touching the edge of the NE quadrant, as this is better suited for the combinatorics introduced in Section 3.

\medskip

By \cite{M}, we have that the decomposition numbers
$D_{\lambda\mu}=[\Delta_n(\mu):L_n(\lambda)]=(P_{n}(\lambda):\Delta_{n}(\mu))$ can be described using these cap diagrams as follows. 
Define the polynomial $d_{\lambda\mu}(q)$ by setting $d_{\lambda\mu}(q)\neq 0$ if and only if $x_{\mu}$ is obtained from
$x_{\lambda}$ by changing the labellings of the elements in some of the
pairs of vertices joined by a cap in $c_\lambda$ from $\vee$ $\wedge$ to $\wedge$ $\vee$ or from $\wedge$ $\wedge$ to $\vee$ $\vee$. In that
case define $d_{\lambda\mu}(q)=q^k$ where $k$ is the number of pairs whose
labellings have been changed. Then we have
\begin{equation}\label{decnumbers}
D_{\lambda\mu}=d_{\lambda\mu}(1).
\end{equation}

\section{Restriction of simple modules}
Here we describe completely the module structure of the restriction  of $L_n(\lambda)$ to $B_{n-1}(\delta)$ for any $\lambda\in \Lambda_n$. 
For $\lambda \in \Lambda$ we define $\supp(\lambda)=\{\mu \in \Lambda \, | \, \mu\triangleright \lambda \,\, \mbox{or} \,\, \mu \triangleleft \lambda\}$. Recall $\mathcal{B}(\mu)$ is the block containing $\mu\in \Lambda$ (as defined in Section 3.1). We define ${\rm pr}^\mu$ to be the functor projecting onto $\mathcal{B}(\mu)$ and write $\res_n^\mu$ for ${\rm pr}^\mu \circ \res_n$. 
We have that $\res_nL_n(\lambda)$ decomposes as
$${\rm res}_nL_n(\lambda)=\bigoplus_{\mathcal{B}(\mu)} {\rm res}_n^\mu L_n(\lambda),$$
and using (\ref{resDelta}) the direct sum can be taken over all blocks $\mathcal{B}(\mu)$ with $\mathcal{B}(\mu)\cap {\rm supp}(\lambda)\neq \emptyset$.
Thus it is enough to describe ${\rm res}_n^\mu L_n(\lambda)$ for each $\mu\in {\rm supp}(\lambda)\cap \Lambda_{n-1}$.
We have three cases to consider depending on the relative degree of singularity of $\lambda$ and $\mu$ as in Lemma \ref{onebox}.\\
Case I: ${\rm deg}_\delta (\mu)={\rm deg}_\delta(\lambda)$.\\
Case II: ${\rm deg}_\delta (\mu)={\rm deg}_\delta(\lambda)+1$.\\
Case III: ${\rm deg}_\delta (\mu)={\rm deg}_\delta(\lambda)-1$.

\smallskip

Case I has been dealt with in \cite{CDM3}. We state the result here for completeness.

\begin{prop}\label{caseI}\cite{CDM3}(Proposition 4.1)
If $\lambda\in \Lambda_n$ and $\mu \in {\rm supp}(\lambda)\cap \Lambda_{n-1}$ with ${\rm deg}_\delta(\mu)={\rm deg}_\delta(\lambda)$ then we have
$${\rm res}_n^{\mu}L_n(\lambda)=L_{n-1}(\mu).$$
\end{prop}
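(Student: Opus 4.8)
The statement is attributed to \cite{CDM3}, so the cleanest strategy is to give a self-contained argument rather than merely cite it, using the geometric and cap-diagram machinery already developed in the excerpt. The plan is to exploit the fact that in Case I the single box added to $\mu$ to obtain $\lambda$ moves the weight diagram within a \emph{single} $W$-orbit without changing the number of $\times$'s (degree of singularity is preserved), so the combinatorial data governing composition factors is rigidly controlled. First I would record, from Lemma \ref{onebox}, exactly which local configurations (i)--(v) can occur, noting that in every Case I configuration the passage from $x_\lambda$ to $x_\mu$ is a swap of adjacent labels of the form $\vee\leftrightarrow\wedge$ (possibly across $\circ$'s and $\times$'s), i.e. $\mu$ and $\lambda$ differ by moving a single $\wedge$ or $\vee$ one ``slot'' to the left or right. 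This is the key structural input.

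Next I would set up the standard framework: compose the restriction exact sequence (\ref{resDelta}) with the block projection $\pr^\mu$. The plan is to argue that $\pr^\mu\res_n\Delta_n(\nu)$ is nonzero only for $\nu$ whose block meets $\mathcal{B}(\mu)$, and to use the decomposition numbers $D_{\nu\lambda}=d_{\nu\lambda}(1)$ from (\ref{decnumbers}) together with the cap-diagram criterion to pin down the composition factors of $\res_n^\mu L_n(\lambda)$ in the Grothendieck group. Concretely, I would compute $[\res_n^\mu L_n(\lambda)]$ by writing $[L_n(\lambda)]=\sum_\nu (D^{-1})_{\lambda\nu}[\Delta_n(\nu)]$, applying $\pr^\mu\res_n$ term by term via (\ref{resDelta}), and then re-expanding in terms of the simple modules $L_{n-1}(\kappa)$. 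Because the degree of singularity is unchanged, the cap diagram $c_\lambda$ and $c_\mu$ have the same ``non-propagating'' structure, and the telescoping in the Grothendieck group should collapse to the single class $[L_{n-1}(\mu)]$.

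Having established $[\res_n^\mu L_n(\lambda)]=[L_{n-1}(\mu)]$ at the level of characters, it remains to upgrade this to an isomorphism of modules, which is the genuinely non-formal step. The plan here is to produce a nonzero $B_{n-1}(\delta)$-homomorphism $\res_n^\mu L_n(\lambda)\to L_{n-1}(\mu)$ (or in the reverse direction) using the surjection $\Delta_n(\lambda)\twoheadrightarrow L_n(\lambda)$ and the image of the $\mu$-summand of (\ref{resDelta}), and then invoke the character identity to conclude that this map is an isomorphism, since both sides have the same (simple) composition content. The candidate map comes from the fact that $\mu\triangleright\lambda$ or $\mu\triangleleft\lambda$, so $L_{n-1}(\mu)$ appears as a genuine subquotient of $\res_n\Delta_n(\lambda)$, and one checks it survives to $L_n(\lambda)$.

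The main obstacle will be exactly this last lifting step: showing the relevant composition factor of the restricted \emph{simple} module (not just the restricted standard module) is $L_{n-1}(\mu)$ with multiplicity one and nothing else, i.e. controlling how the radical of $L_n(\lambda)$ interacts with restriction. The character computation via $(D^{-1})_{\lambda\nu}$ is formally routine but delicate, since the entries of $D^{-1}$ from \cite{CD} are signed and combinatorially intricate; the real work is verifying that the Case I hypothesis $\mathrm{deg}_\delta(\mu)=\mathrm{deg}_\delta(\lambda)$ forces all the ``extra'' cancellations, leaving $L_{n-1}(\mu)$ alone. I expect this to reduce to a local computation on the two adjacent vertices singled out in Lemma \ref{onebox}, combined with the observation that a degree-preserving box move does not create or destroy any cap in passing from $c_\lambda$ to $c_\mu$, so no new composition factors can enter.
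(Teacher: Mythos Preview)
The paper gives no proof of this proposition; it is simply stated with attribution to \cite{CDM3} and the text moves on to Cases II and III. So there is no in-paper argument to compare against.

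Your outline has a real gap, and it is not where you think it is. Once you have established $[\res_n^\mu L_n(\lambda)]=[L_{n-1}(\mu)]$ in the Grothendieck group there is nothing left to do: a module whose class equals that of a single simple \emph{is} that simple. Your ``genuinely non-formal'' step 3, producing a nonzero map and checking it survives to the quotient, is vacuous. The entire content is your step 2, the character identity, which you wave through as ``formally routine but delicate'' and then defer with ``I expect this to reduce to a local computation''. That \emph{is} the proof, and you have not carried it out. Expanding $[L_n(\lambda)]$ via $D^{-1}$, restricting each standard by (\ref{resDelta}), projecting to $\mathcal{B}(\mu)$, and re-expanding in simples is a valid strategy, but the entries of $D^{-1}$ from \cite{CD} are signed sums over cap configurations, and the telescoping you want must be verified, not asserted.

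For context, the argument in \cite{CDM3} is structural rather than computational: $\pr^\mu\circ\res_n$ restricted to $\mathcal{B}_n(\lambda)$ is a translation functor between blocks of equal singularity, and such functors are shown there to be Morita equivalences; an equivalence carries simples to simples and tracking the labelling gives the result. If you prefer a self-contained argument in the style of the present paper, the method used for Cases II and III---computing $[\res_n^\mu L_n(\lambda):L_{n-1}(\eta)]=\dim\Hom_n(\ind_{n-1}^{\lambda}P_{n-1}(\eta),L_n(\lambda))$ via Frobenius reciprocity and an analogue of Proposition~\ref{indP} for the equal-singularity case---is more direct than going through $D^{-1}$.
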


It will be convenient to define a new notation here for cases II and III. Suppose that $\lambda'\in {\rm supp}(\lambda)$ with ${\rm deg}_\delta (\lambda')={\rm deg}_\delta(\lambda)+1$. Then it's easy to see from Lemma \ref{onebox} and the description of blocks given in Section 3.1 that 
$${\rm supp}(\lambda')\cap \mathcal{B}(\lambda)=\{\lambda^+, \lambda^-\}$$
with one of $\lambda^+$ or $\lambda^-$ being equal to $\lambda$. We can also assume that $\lambda^+>\lambda^-$. Moreover we have that the weight diagrams of 
 $\lambda', \lambda^+$ and $\lambda^-$ differ in precisely two adjacent vertices, say $i-1$ and $i$ as depicted in Figure \ref{basic}. 

\begin{figure}[ht]
\includegraphics[width=12cm]{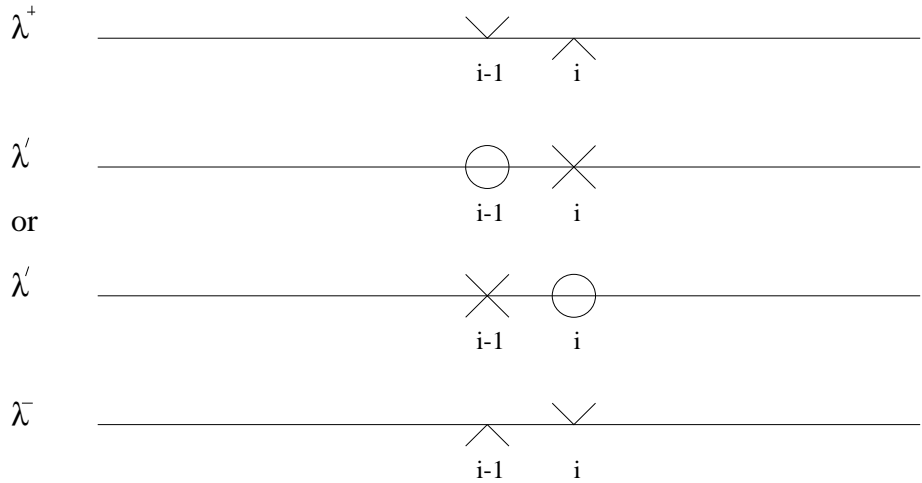}
\caption{Weight diagrams corresponding to $\lambda'$, $\lambda^-$ and $\lambda^+$.}
\label{basic}
\end{figure}

In all the following figures we will always assume that the weight diagram of $\lambda'$ has labels $\circ$ $\times$ in positions $i-1$,$i$. The other case is exactly the same. 

Using the above notation we can now state the result for Case II as follows.

\begin{prop}\label{caseII} \cite{CDM3}(Theorem 4.8)
Let $\lambda', \lambda^+, \lambda^-$ be as above then we have
\begin{eqnarray*}
&(i)& {\rm res}_n^{\lambda'}L_n(\lambda^+)=L_{n-1}(\lambda'), \quad \mbox{and}\\
&(ii)& {\rm res}_n^{\lambda'}L_n(\lambda^-)=0.
\end{eqnarray*}
\end{prop}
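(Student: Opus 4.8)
The plan is to reduce both statements to computations on block-projected standard and costandard modules, and then to pin down the two simple restrictions by a single $\Hom$-space calculation. Throughout I keep the normalisation of the setup, so $\lambda^+\in\Lambda_n$ has labels $\vee\wedge$ at vertices $i-1,i$ (with $\lambda'\triangleleft\lambda^+$) while $\lambda^-\in\Lambda_{n-2}$ has labels $\wedge\vee$ there (with $\lambda'\triangleright\lambda^-$).

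The first and decisive step is the local identity $\res_n^{\lambda'}\Delta_n(\lambda^+)=\Delta_{n-1}(\lambda')=\res_n^{\lambda'}\Delta_n(\lambda^-)$. Since the three weight diagrams agree outside $\{i-1,i\}$, and every partition of $\mathcal{B}(\lambda')$ carries $\circ\times$ there whereas every partition of $\mathcal{B}(\lambda)$ carries two $\vee/\wedge$ symbols there, a one-box move out of $\lambda^{\pm}$ can land in $\mathcal{B}(\lambda')$ only if it acts at $\{i-1,i\}$ and turns $\vee\wedge$ (resp. $\wedge\vee$) into $\circ\times$; by Lemma~\ref{onebox} this move is unique and recovers $\lambda'$. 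Hence in the block projection of (\ref{resDelta}) exactly one summand $\Delta_{n-1}(\lambda')$ survives, as a submodule for $\lambda^+$ and as a quotient for $\lambda^-$. Applying the cellular duality $\circledast$ (which fixes simples, interchanges $\Delta\leftrightarrow\nabla$ and preserves blocks) gives $\res_n^{\lambda'}\nabla_n(\lambda^{\pm})=\nabla_{n-1}(\lambda')$ for free. Writing $M_{\pm}=\res_n^{\lambda'}L_n(\lambda^{\pm})$, the surjection $\Delta_n(\lambda^{\pm})\twoheadrightarrow L_n(\lambda^{\pm})$ now makes $M_{\pm}$ a quotient of $\Delta_{n-1}(\lambda')$, and dually $L_n(\lambda^{\pm})\hookrightarrow\nabla_n(\lambda^{\pm})$ makes it a submodule of $\nabla_{n-1}(\lambda')$. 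The composite $\Delta_{n-1}(\lambda')\twoheadrightarrow M_{\pm}\hookrightarrow\nabla_{n-1}(\lambda')$ lies in the one-dimensional space $\Hom(\Delta_{n-1}(\lambda'),\nabla_{n-1}(\lambda'))$, whose nonzero element has image $L_{n-1}(\lambda')$; so each $M_{\pm}$ is either $0$ or $\cong L_{n-1}(\lambda')$.

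The part I expect to be the main obstacle is excluding $M_+=0$, which I would settle by a reciprocity argument. The block projection of (\ref{indDelta}) together with the same local analysis gives a short exact sequence $0\to\Delta_n(\lambda^-)\to\pr^{\lambda}\ind_{n-1}\Delta_{n-1}(\lambda')\to\Delta_n(\lambda^+)\to0$, since $\supp(\lambda')\cap\mathcal{B}(\lambda)=\{\lambda^+,\lambda^-\}$ with $\lambda^+\triangleright\lambda'$ and $\lambda^-\triangleleft\lambda'$. Applying $\Hom_{B_n}(-,L_n(\lambda^+))$ and using that a homomorphism from a standard module to a simple is nonzero only onto its head, the outer terms contribute $\CC$ and $0$; hence $\Hom_{B_n}(\ind_{n-1}\Delta_{n-1}(\lambda'),L_n(\lambda^+))$ is one-dimensional (Hom into $L_n(\lambda^+)$ only sees the $\mathcal{B}(\lambda)$-part of the source). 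By the adjunction $\ind_{n-1}\dashv\res_n$ this equals $\Hom_{B_{n-1}}(\Delta_{n-1}(\lambda'),M_+)$, which would vanish if $M_+=0$. Therefore $M_+\neq0$, so $M_+\cong L_{n-1}(\lambda')$, proving (i).

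Finally, (ii) follows formally from (i). Restricting $0\to\rad\Delta_n(\lambda^-)\to\Delta_n(\lambda^-)\to L_n(\lambda^-)\to0$ and using the first step yields $0\to R\to\Delta_{n-1}(\lambda')\to M_-\to0$ with $R=\res_n^{\lambda'}\rad\Delta_n(\lambda^-)$. The cap-diagram rule (\ref{decnumbers}) gives $[\Delta_n(\lambda^-):L_n(\lambda^+)]=1$ (change the cap on $\{i-1,i\}$ in $c_{\lambda^+}$), and as $\hd\Delta_n(\lambda^-)=L_n(\lambda^-)\neq L_n(\lambda^+)$ this copy of $L_n(\lambda^+)$ lies in $\rad\Delta_n(\lambda^-)$. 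By (i) it contributes $L_{n-1}(\lambda')$ to $R$, so $[R:L_{n-1}(\lambda')]\ge1$; since $[\Delta_{n-1}(\lambda'):L_{n-1}(\lambda')]=1$ this forces $[M_-:L_{n-1}(\lambda')]=0$. A nonzero quotient of $\Delta_{n-1}(\lambda')$ has head $L_{n-1}(\lambda')$, so $M_-=0$.
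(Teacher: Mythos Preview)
Your argument is correct. The paper does not give its own proof of this proposition; it is simply quoted from \cite{CDM3} (Theorem~4.8), so there is no in-paper argument to compare against. A few small remarks on your write-up: your parenthetical ``$\lambda^-\in\Lambda_{n-2}$'' is potentially confusing, since the statement concerns $\res_n L_n(\lambda^-)$ and hence needs $\lambda^-\in\Lambda_n$; of course $\Lambda_{n-2}\subset\Lambda_n$, so nothing is actually wrong. In Step~1 your local analysis is right: the $\circ/\times$ pattern is a block invariant, so any $\mu\in\supp(\lambda^\pm)\cap\mathcal{B}(\lambda')$ must differ from $\lambda^\pm$ precisely at $\{i-1,i\}$, and Lemma~\ref{onebox} then forces $\mu=\lambda'$ (the alternative $\times\circ$ pattern lies in a different block). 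The reduction via $\Hom(\Delta_{n-1}(\lambda'),\nabla_{n-1}(\lambda'))\cong\CC$ is clean and uses only the cellular/quasi-hereditary structure together with the compatibility of the anti-involution with the embedding $B_{n-1}\hookrightarrow B_n$. Your reciprocity computation for (i) and the composition-factor count for (ii) (using $[\Delta_n(\lambda^-):L_n(\lambda^+)]=1$, which is immediate from the cap on $\{i-1,i\}$ in $c_{\lambda^+}$) are both valid; note that in proving (ii) you invoke (i) for $\lambda^+\in\Lambda_n$, which holds since $|\lambda^+|=|\lambda^-|+2\le n$.
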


Keeping the same notation, for Case III we need to describe ${\rm res}_n^{\lambda^+}L_n(\lambda')$.
This is more complicated than the previous two cases. Indeed we will see that the number of composition factors can get arbitrarily large (as $n$ varies).

Note that for any $\mu'\in \mathcal{B}(\lambda')$ we have ${\rm supp}(\mu')\cap \mathcal{B}(\lambda^+)=\{\mu^+, \mu^-\}$ and ${\rm supp}(\mu^{\pm})\cap \mathcal{B}(\lambda')=\{\mu' \}$. The weight diagrams of $\mu', \mu^+$ and $\mu^-$ differ in precisely vertices $i-1$ and $i$ and these two vertices are labelled as in $\lambda', \lambda^+$ and $\lambda^-$ respectively.

\medskip

We now recall a result from \cite{M}(proof of (7.7)) which will be needed in the proof of the next theorem.

\begin{prop}\label{indP} Let $\lambda', \lambda^+, \lambda^-$ and $\mu', \mu^+, \mu^-$ be as above. Suppose $\mu'\in \Lambda_{n}$. Then we have
\begin{eqnarray*}
&&{\rm ind}_{n-1}^{\lambda'}P_{n-1}(\mu^-)\cong P_n(\mu'), \qquad \mbox{and}\\
&&{\rm ind}_{n-1}^{\lambda'}P_{n-1}(\mu^+)\cong 2 P_n(\mu').
\end{eqnarray*}
\end{prop}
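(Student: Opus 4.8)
The plan is to prove both isomorphisms by exploiting induction--restriction adjunction together with the standard-module exact sequences (\ref{resDelta}) and (\ref{indDelta}), and the fact that $B_n(\delta)$ is quasi-hereditary so that induction and restriction are exact and send projectives to projectives with $\nabla$- (or $\Delta$-) filtrations. First I would record the relevant local structure: by the running conventions $\mu', \mu^+, \mu^-$ differ only in the two adjacent vertices $i-1, i$, where $\mu'$ carries the labels $\circ\,\times$ (as in Figure \ref{basic}), $\mu^+$ the labels of $\lambda^+$, and $\mu^-$ those of $\lambda^-$. I would then use that $\ind_{n-1}$ of a projective is projective (since induction has an exact right adjoint, namely restriction, so it preserves projectivity), so $\ind_{n-1}P_{n-1}(\mu^{\pm})$ is a projective $B_n(\delta)$-module and hence a direct sum of indecomposable projectives $P_n(\nu)$.

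The main computation is to pin down which $P_n(\nu)$ occur and with what multiplicity. The clean way is to count $\Delta$-filtration multiplicities: since $\ind_{n-1}$ is exact and sends $\Delta$-filtered modules to $\Delta$-filtered modules, I can compute $(\ind_{n-1}P_{n-1}(\mu^{\pm}) : \Delta_n(\nu))$ from the Brauer reciprocity / BGG-type identity $(P_{n-1}(\mu^{\pm}):\Delta_{n-1}(\eta)) = [\Delta_{n-1}(\eta):L_{n-1}(\mu^{\pm})]$ together with the induction rule (\ref{indDelta}). Concretely, $(\ind_{n-1}P_{n-1}(\mu^{\pm}):\Delta_n(\nu))$ is obtained by summing $(P_{n-1}(\mu^{\pm}):\Delta_{n-1}(\eta))$ over those $\eta$ with $\nu\triangleright\eta$ or $\nu\triangleleft\eta$. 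After applying the block projection $\pr^{\lambda'}$, only the boxes added/removed at vertices $i-1,i$ survive, so the surviving terms are governed by the cap-diagram description of decomposition numbers from (\ref{decnumbers}). I expect the key numerical input to be that passing from $\mu^-$ to $\mu'$ costs a single cap-label change while passing from $\mu^+$ to $\mu'$ admits two (the extra factor of $2$ coming from the $\wedge\wedge \to \vee\vee$ versus $\vee\wedge\to\wedge\vee$ dichotomy in the cap-diagram rule), which is precisely the source of the multiplicity $2$ in the second isomorphism.

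Having matched $\Delta$-filtration multiplicities, I would upgrade this to an honest isomorphism of projectives. Since a projective module is determined up to isomorphism by its $\Delta$-filtration multiplicities $(P:\Delta_n(\nu))$ in a quasi-hereditary algebra, it suffices to check that $\pr^{\lambda'}\ind_{n-1}P_{n-1}(\mu^-)$ and $P_n(\mu')$ (respectively $\pr^{\lambda'}\ind_{n-1}P_{n-1}(\mu^+)$ and $2P_n(\mu')$) have identical $\Delta$-multiplicities $(\,\cdot\,:\Delta_n(\nu))$ for every $\nu$. For the first isomorphism I would verify that $(P_n(\mu'):\Delta_n(\nu))$ and the computed multiplicity agree, identifying the unique indecomposable projective summand as $P_n(\mu')$; for the second I would show each multiplicity is exactly doubled, forcing $2P_n(\mu')$.

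The hard part will be the bookkeeping in the multiplicity computation after block projection --- matching the $\triangleright/\triangleleft$ box-moves in (\ref{indDelta}) against the cap-diagram label-changes of (\ref{decnumbers}) and confirming that no spurious summands $P_n(\nu)$ with $\nu\neq\mu'$ survive the projection $\pr^{\lambda'}$, and in particular that the factor of $2$ is genuinely a multiplicity and not two distinct indecomposables. I would handle this by a careful case analysis on the two admissible local configurations (the $\circ\,\times$ case stated in the text, the other being symmetric), using Lemma \ref{onebox} to enumerate exactly which $\eta$ contribute; since this is quoted from \cite{M} as the proof of (7.7), I would cite that source for the detailed verification rather than reproduce it in full.
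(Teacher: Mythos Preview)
The paper does not actually prove this proposition: it is stated with the preamble ``We now recall a result from \cite{M} (proof of (7.7))'' and no argument is given. So there is nothing to compare your proposal against beyond the citation, and indeed you yourself end by deferring to that citation. Your overall strategy --- induction preserves projectives, projectives over a quasi-hereditary algebra are determined by their $\Delta$-multiplicities, and these multiplicities can be read off from (\ref{indDelta}) combined with the cap-diagram description (\ref{decnumbers}) --- is the correct way to set this up and is in the spirit of how the result is used later in the paper.

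One point in your sketch is misleading, however. Your heuristic for the factor of $2$ (``the $\wedge\wedge\to\vee\vee$ versus $\vee\wedge\to\wedge\vee$ dichotomy'') is not the mechanism at work. In $x_{\mu^+}$ the vertices $i-1,i$ carry $\vee\wedge$, so in the cap diagram $c_{\mu^+}$ they are joined by a small cap. Consequently, by (\ref{decnumbers}) the $\Delta$-factors of $P_{n-1}(\mu^+)$ come in matched pairs: for each such factor with $\vee\wedge$ at $i-1,i$ there is exactly one with $\wedge\vee$ there, obtained by flipping that one cap, and the rest of the weight diagram is identical. When you apply $\ind_{n-1}$ and project to $\mathcal{B}(\lambda')$, the $\vee\wedge$ member of each pair contributes (via removing a box, Lemma~\ref{onebox} Case~II) and the $\wedge\vee$ member contributes (via adding a box, Lemma~\ref{onebox} Case~III) the \emph{same} standard factor $\Delta_n(\nu')$ with $\circ\times$ at $i-1,i$. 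That is the doubling. For $\mu^-$, which carries $\wedge\vee$ at $i-1,i$, there is no such small cap and the bijection between the surviving $\Delta$-factors and those of $P_n(\mu')$ is one-to-one. If you intend to write out the argument rather than cite \cite{M}, this is the combinatorial core you need to make precise; your ``no spurious summands $P_n(\nu)$ with $\nu\neq\mu'$'' check then amounts to observing that the resulting $\Delta$-multiplicities match those of $P_n(\mu')$ (respectively $2P_n(\mu')$) exactly, via the same cap-flipping bijection.
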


Note that the cap diagram associated to a partition splits the NE quadrant of the plane into open connected components, called chambers. We say that a vertex a cap, or a ray belongs to a chamber $C$ if it is in the closure of $C$. Note that each vertex labelled with $\times$ or $\circ$ belongs to precisely one chamber and each vertex labelled $\vee$ or $\wedge$ and each cap or ray belongs to precisely two chambers. In  $x_{\lambda'}$, the vertex $i$ is labelled with $\times$ or $\circ$, so it belongs to a unique chamber $C_i$ in the cap diagram of $c_{\lambda'}$. 

Now we define the subset $I(\lambda', \lambda^+)$ of the set of vertices of $c_{\lambda'}$ by setting $j\in I(\lambda',\lambda^+)$ if and only if $j$ belongs to $C_i$ and one of the following three possibilities holds
\begin{equation}\label{Ii}
\mbox{$j>i$ and  $j$ is labelled with $\vee$,}
\end{equation}
\begin{equation}\label{Iii}
\mbox{$j<i$ and $j$ is labelled with $\wedge$, or}
\end{equation}
\begin{equation}\label{Iiii}
\mbox{$j<i$ and $j$ is labelled with $\vee$ and it is either on a ray or connected to some $k>i$.}
\end{equation}

\noindent Examples of all $j\in I(\lambda', \lambda^+)$ for various $\lambda'$ are given in Figure \ref{caps}.

For each $j\in I(\lambda', \lambda^+)$, define $\lambda'_{(j)}$ as follows. If $j$ satisfies (\ref{Ii}) above then $\lambda'_{(j)}$ is the partition whose weight diagram is obtained from $x_{\lambda'}$ by labelling vertex $j$ with $\wedge$, and vertices $i-1$ and $i$ with $\vee$, leaving everything else unchanged. If $j$ satisfies case (\ref{Iii}) above then $\lambda'_{(j)}$ is the partition whose weight diagram is obtained from $x_{\lambda'}$ by labelling vertex $j$ with $\vee$, and vertices $i-1$ and $i$ with $\wedge$, leaving everything else unchanged. Finally, if $j$ satisfies case (\ref{Iiii}) then $\lambda'_{(j)}$ is the partition whose weight diagram is obtained from $x_{\lambda'}$ by labelling vertex $j, i$ and $i-1$ with $\wedge$,  leaving everything else unchanged.

\begin{figure}[ht]
\includegraphics[width=12cm]{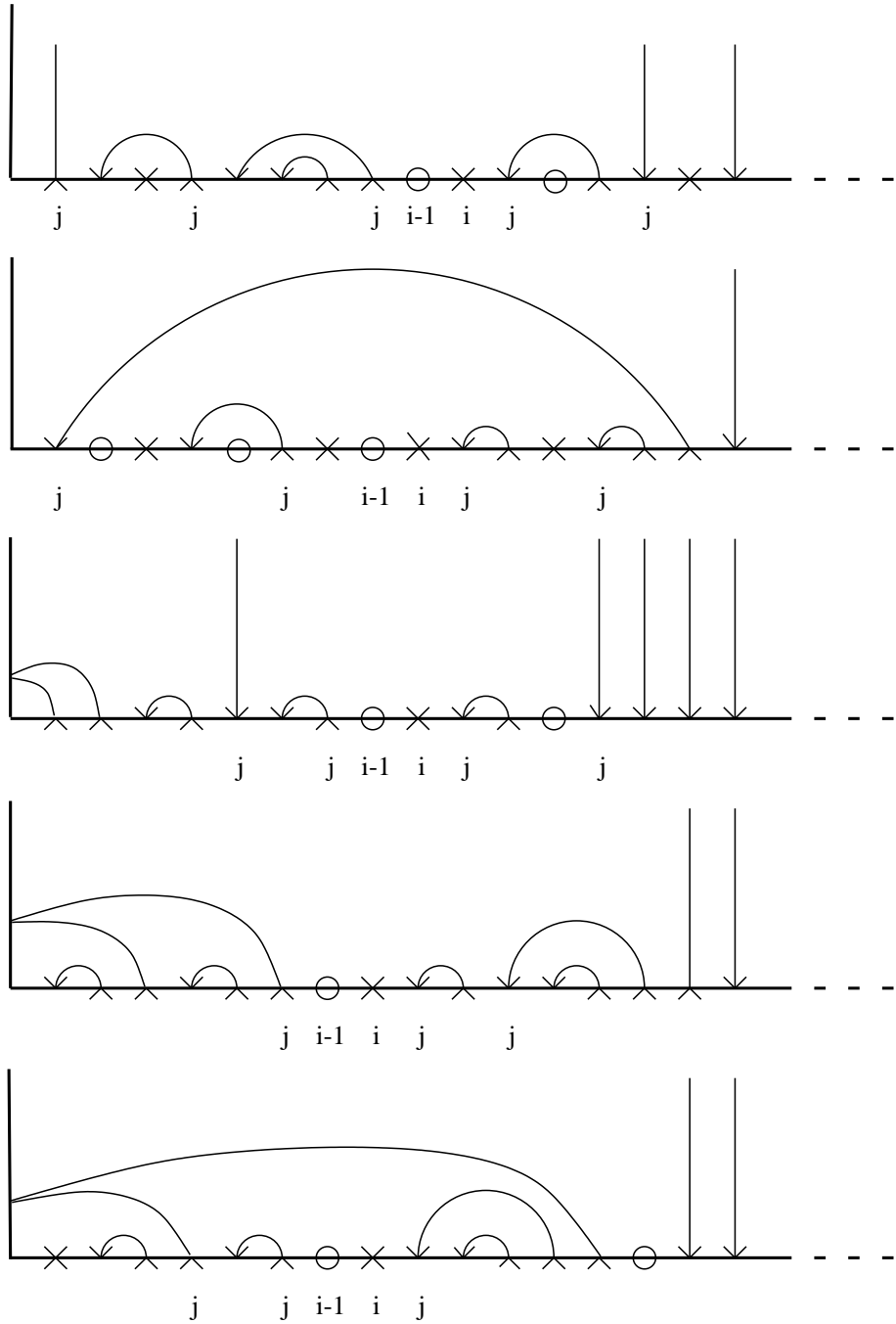}
\caption{Examples of cap diagrams associated to $\lambda'$ with the set of vertices $j\in I(\lambda', \lambda^+)$}
\label{caps}
\end{figure}

\noindent Now define 
$$\Lambda_{n-1}(\lambda', \lambda^+)=\{\lambda'_{(j)}\,\, :\,\, j\in I(\lambda', \lambda^+)\}\cap \Lambda_{n-1}.$$ 

\begin{thm} 
Let $\lambda', \lambda^+, \lambda^-$ be as above.\\
 If $|\lambda'|=n$ then we have $\res_n^{\lambda^+}L_n(\lambda')=L_{n-1}(\lambda^-)$.\\
If $|\lambda'|<n$ then we have that $\res_n^{\lambda^+}L_n(\lambda')$ has simple head and simple socle isomorphic to $L_{n-1}(\lambda^+)$ and we have
$$\rad(\res_n^{\lambda^+}L_n(\lambda'))/\soc(\res_n^{\lambda^+}L_n(\lambda'))
\, \cong \, L_{n-1}(\lambda^-) \oplus \bigoplus_{\mu\in \Lambda_{n-1}(\lambda',\lambda^+)}L_{n-1}(\mu) $$
\end{thm}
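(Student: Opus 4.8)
The plan is to exploit the induction--restriction adjunction together with the projective cover structure coming from Proposition \ref{indP}, reducing statements about $\res_n^{\lambda^+}L_n(\lambda')$ to computations of homomorphism spaces and $\Delta$-filtration multiplicities that are controlled by the cap-diagram combinatorics of Section 3.3. First I would treat the easy case $|\lambda'|=n$ separately: here $\lambda'$ is the top of its block in $\Lambda_n$, so $L_n(\lambda')=\Delta_n(\lambda')$, and restricting via the sequence (\ref{resDelta}) together with Proposition \ref{caseII}(ii) (applied in the opposite direction, so the $\lambda'$-summand contributes nothing to the $\lambda^+$-block) leaves only the $\lambda^-$-contribution, giving $L_{n-1}(\lambda^-)$. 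The bulk of the work is the case $|\lambda'|<n$.

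For the main case my strategy is to compute $\res_n^{\lambda^+}L_n(\lambda')$ by first understanding $\res_n^{\lambda^+}P_n(\lambda')$ and then quotienting. Using the Frobenius-type adjunction $\Hom(\res_n^{\lambda^+} M, L_{n-1}(\nu))\cong \Hom(M, \ind_{n-1}^{\lambda^+}(\text{-related object}))$ together with Proposition \ref{indP}, which identifies $\ind_{n-1}^{\lambda'}P_{n-1}(\mu^{\pm})$ with copies of $P_n(\mu')$, I can transport questions about composition factors of the restriction into questions about $\Delta$-filtration multiplicities of standard modules, which are governed by the decomposition polynomials $d_{\lambda\mu}(q)$ of (\ref{decnumbers}). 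Concretely, I expect to show the head and socle are both $L_{n-1}(\lambda^+)$ by a self-duality argument: $L_n(\lambda')$ is self-dual (simple modules for a quasi-hereditary algebra carry a contravariant duality fixing simples), restriction commutes with this duality, and $\res_n^{\lambda^+}$ preserves it, so the module is self-dual; identifying the head as $L_{n-1}(\lambda^+)$ via the adjunction and Proposition \ref{caseII}(i) then forces the socle to agree by duality.

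The heart of the theorem is the identification of the Loewy layer $\rad/\soc$ with $L_{n-1}(\lambda^-)\oplus\bigoplus_{\mu\in\Lambda_{n-1}(\lambda',\lambda^+)}L_{n-1}(\mu)$. Here I would argue that each simple $L_{n-1}(\mu)$ appears in $\res_n^{\lambda^+}L_n(\lambda')$ with multiplicity equal to the multiplicity of $L_n(\lambda')$ in the relevant induced standard module, and then show these multiplicities are exactly $1$ precisely for the partitions $\mu=\lambda'_{(j)}$ with $j\in I(\lambda',\lambda^+)$ (together with $\lambda^-$). The combinatorial conditions (\ref{Ii})--(\ref{Iiii}) defining $I(\lambda',\lambda^+)$ should emerge from analysing which cap-diagram relabellings $x_{\lambda'}\leadsto x_\mu$ are compatible simultaneously with the branching rule (\ref{resDelta}) at vertices $i-1,i$ and with a nonzero decomposition number $d_{\lambda'\mu}(q)$; the chamber condition ``$j$ belongs to $C_i$'' is precisely the geometric encoding of the caps in $c_{\lambda'}$ that can be flipped to reach a partition supported adjacent to $\lambda'$.

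The main obstacle I anticipate is the layer count, specifically proving that every such factor lands in $\rad/\soc$ rather than deeper in the radical, and that the listed factors are exhaustive with multiplicity exactly one. Establishing the precise Loewy position requires more than just composition-factor bookkeeping: I expect to need a careful analysis of the radical filtration of $\res_n^{\lambda^+}P_n(\lambda')$, using Proposition \ref{indP} to pin down its $\Delta$-filtration and then the known structure of $P_n(\mu')$ from \cite{M} to read off where each $L_{n-1}(\mu)$ sits. Checking that the cap-diagram bookkeeping of $I(\lambda',\lambda^+)$ captures exactly the second Loewy layer---and in particular that the three cases (\ref{Ii})--(\ref{Iiii}), corresponding to the distinct local relabellings producing $\lambda'_{(j)}$, are mutually exclusive and complete---will be the most delicate and calculation-heavy step, and is where I would spend the most care.
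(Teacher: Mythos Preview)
Your proposal has the right ingredients---adjunction, Proposition~\ref{indP}, cap-diagram combinatorics, and self-duality---but it takes an unnecessary detour and overestimates the difficulty of the Loewy structure.

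First, the paper does \emph{not} pass through $\res_n^{\lambda^+}P_n(\lambda')$. Instead it computes composition multiplicities directly via
\[
[\res_n^{\lambda^+}L_n(\lambda'):L_{n-1}(\mu)]=\dim\Hom_n(\ind_{n-1}^{\lambda'}P_{n-1}(\mu),L_n(\lambda')),
\]
and then, for each $\mu\in\mathcal B(\lambda^+)$, identifies $\ind_{n-1}^{\lambda'}P_{n-1}(\mu)$ as a \emph{single} indecomposable projective $P_n(\eta')$ (or $2P_n(\eta')$ in Case~A) by matching $\Delta$-filtration factors against those of the candidate projective using~(\ref{indDelta}) and~(\ref{decnumbers}). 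The case analysis is organised by the labels of $x_\mu$ at vertices $i-1,i$ (four cases $\vee\wedge$, $\wedge\vee$, $\vee\vee$, $\wedge\wedge$, the last two splitting into subcases according to the shape of $c_\mu$ near $i$), and the conditions (\ref{Ii})--(\ref{Iiii}) emerge from asking when $\eta'=\lambda'$. This is more direct than restricting $P_n(\lambda')$ and then trying to extract the simple quotient.

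Second, your worry about the Loewy structure is misplaced. Once the multiplicity computation yields $[\res L:L_{n-1}(\lambda^+)]=2$ and $[\res L:L_{n-1}(\nu)]=1$ for every other factor $\nu$, self-duality alone finishes the argument: if any $L_{n-1}(\nu)$ with $\nu\neq\lambda^+$ lay in the socle it would also lie in the head and hence have multiplicity at least~$2$; so $\soc=\hd=L_{n-1}(\lambda^+)$. Then $\rad/\soc$ is a self-dual, multiplicity-free module, and such a module is automatically semisimple (any simple in its head is also in its socle, hence splits off). No analysis of the radical filtration of a projective is required. This is exactly what the paper's one-line ``using the fact that all simple modules are self-dual, the restriction must have the required module structure'' is invoking.
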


\begin{proof}
If $|\lambda'|=n$ we have $L_n(\lambda')=\Delta_n(\lambda')$ and $L_{n-1}(\lambda^-)=\Delta_{n-1}(\lambda^-)$ so the result follows immediately from the exact sequence given in (\ref{resDelta}).\\
Now suppose that $|\lambda'|<n$. Let us start by finding the composition factors of this module.
Note that for any $\mu\in \calB(\lambda^+)$ we have
\begin{eqnarray*}
[\res_n^{\lambda^+}L_n(\lambda'):L_{n-1}(\mu)]&=& \dim \Hom_{n-1} (P_{n-1}(\mu),\res_n^{\lambda^+}L_n(\lambda'))\\
&=& \dim \Hom_n (\ind_{n-1}^{\lambda'}P_{n-1}(\mu), L_n(\lambda')).
\end{eqnarray*} 
If $\Hom_n (\ind_{n-1}^{\lambda'}P_{n-1}(\mu), L_n(\lambda'))\neq 0$, then $P_n(\lambda')$ must be a summand of $\ind_{n-1}^{\lambda'}P_{n-1}(\mu)$. 
So we must have $(P_{n-1}(\mu):\Delta_{n-1}(\lambda^+))\neq 0$ or $(P_{n-1}(\mu):\Delta_{n-1}(\lambda^-))\neq 0$ using (\ref{indDelta}). The $\Delta$-factors of the projective module $P_{n-1}(\mu)$ are given by (\ref{decnumbers}). 
As $\mu\in \mathcal{B}(\lambda^+)$, using the block description on weight diagrams given in Section 2.1, we have that the vertices $i-1$ and $i$ in $x_\mu$ are labelled by $\vee$ or $\wedge$. Note that we must also have either $\mu\geq \lambda^+$ or $\mu\geq \lambda^-$. We now have four cases to consider, depending on the labellings of $i-1$ and $i$.

\smallskip

\noindent \textbf{Case A}. In $x_\mu$, vertices $i-1$ and $i$ are labelled by $\vee$ and $\wedge$ respectively. Here $\mu$ is of the from $\mu^+$ and so from Proposition \ref{indP} we have $\ind_{n-1}^{\lambda'}P_{n-1}(\mu)=2P_n(\mu')$. Thus we must have $\mu=\lambda^+$ and we have
$$\dim \Hom_n(\ind_{n-1}^{\lambda'}P_{n-1}(\lambda^+), L_n(\lambda'))= 2.$$

\medskip

\noindent \textbf{Case B}. In $x_\mu$, vertices $i-1$ and $i$ are labelled by $\wedge$ and $\vee$ respectively. Here $\mu$ is of the form $\mu^-$ and so, by Proposition \ref{indP} we have $\ind_{n-1}^{\lambda'}P_{n-1}(\mu)=P_n(\mu')$.Thus we must have $\mu=\lambda^-$ and we have
$$\dim \Hom_n(\ind_{n-1}^{\lambda'}P_{n-1}(\lambda^-), L_n(\lambda'))= 1.$$

\medskip

\noindent \textbf{Case C}. In $x_\mu$, both vertices $i-1$ and $i$ are labelled by $\vee$. There are two subcases C(i) and C(ii) to consider here depending on the cap diagram $c_\mu$. First consider the case C(i) as depicted in Figure \ref{caseC(i)}.

\begin{figure}[ht]
\includegraphics[width=8cm]{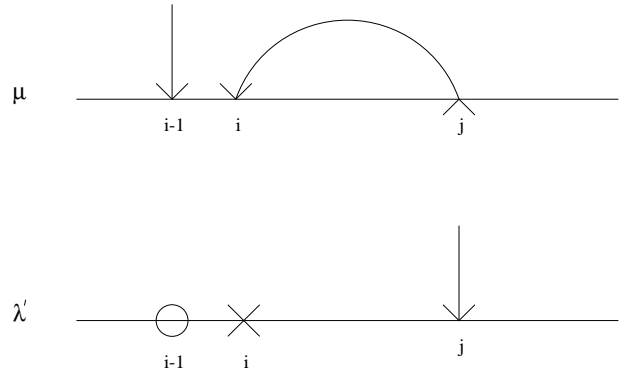}
\caption{Case C(i)}
\label{caseC(i)}
\end{figure}

\noindent Using (\ref{decnumbers}) we have that $(P_{n-1}(\mu):\Delta_{n-1}(\lambda^-))=0$ and if $(P_{n-1}(\mu):\Delta_{n-1}(\lambda^+))\neq 0$ then $x_{\lambda^+}$ is obtained from $x_{\mu}$ by swapping the labelling of vertices $i$ and $j$ and possibly other pairs connected by a cap  in $c_\mu$. Now when we apply the functor $\ind_{n-1}^{\lambda'}$ to $P_{n-1}(\mu)$, it follows from (\ref{indDelta}) that all its $\Delta$-factors corresponding to weight diagrams with the labels of $i$ and $j$ as in $x_\mu$ (namely $\vee$ and $\wedge$ resp.) will go to zero.
Let $\eta$ be the partition obtained from $\mu$ by swapping the labels on vertices $i$ and $j$ and leaving everything else unchanged. Note that $\eta$ is of the form $\eta^+$ and it is the largest $\Delta$-factor not annihilated by $\ind_{n-1}^{\lambda'}$ . Now it's easy to see from (\ref{decnumbers}) that there is a one-to-one correspondence between the $\Delta$-factors of $P_{n-1}(\mu)$ with the labels of $i$ and $j$ swapped (that is, those not annihilated by the induction functor) and the $\Delta$-factors of $P_n(\eta')$. This implies that 
$$\ind_{n-1}^{\lambda'}P_{n-1}(\mu)=P_n(\eta').$$
Thus we must have $\eta'=\lambda'$ with $\lambda'$ as in Figure \ref{caseC(i)}, where vertices $i$ and $j$ are in the closure of the same chamber.
Thus for each such $\mu$ we have $\dim \Hom_n(\ind_{n-1}^{\lambda'}P_{n-1}(\mu), L_n(\lambda'))=1$.

\medskip

Now consider the case C(ii) with $\mu$ as depicted in Figure \ref{caseC(ii)}.

\begin{figure}[ht]
\includegraphics[width=8cm]{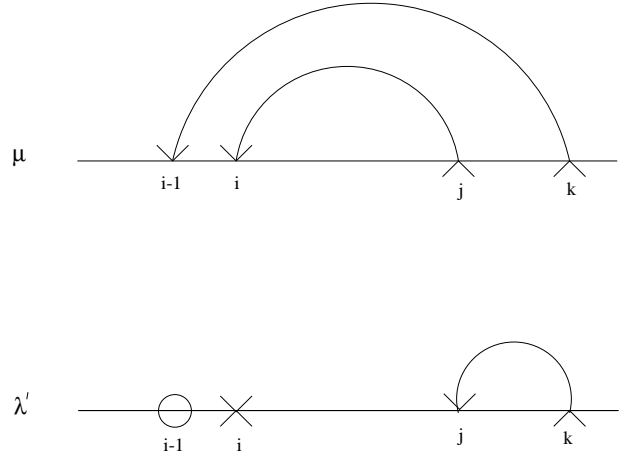}
\caption{Case C(ii)}
\label{caseC(ii)}
\end{figure}

\noindent If $(P_{n-1}(\mu):\Delta_{n-1}(\lambda^\pm))\neq 0$ then $\lambda^\pm$ is obtained from $\mu$ by swapping the labelling of vertices $i$ and $j$ for $\lambda^+$ and of vertices $i-1$ and $k$ for $\lambda^-$ and possibly other pairs connected by a cap  as well. Now when we apply the functor $\ind_{n-1}^{\lambda'}$ to $P_{n-1}(\mu)$, it follows from (\ref{indDelta}) that all its $\Delta$-factors corresponding to diagrams with the labels of $i$ and $j$ and the labels of $i-1$ and $k$ are either both swapped or both unchanged will go to zero.
Let $\eta$ be the partition whose weight diagram is obtained from $x_\mu$ by swapping the labels on vertices $i$ and $j$ and leaving everything else unchanged. Note that $\eta$ is of the form $\eta^+$ and it is the largest $\Delta$-factor not annihilated by $\ind_{n-1}^{\lambda'}$ . Now it's easy to see that there is a one-to-one correspondence between the $\Delta$-factors of $P_{n-1}(\mu)$ not annihilated by the induction functor and the $\Delta$-factors of $P_n(\eta')$. This implies that 
$$\ind_{n-1}^{\lambda'}P_{n-1}(\mu)=P_n(\eta').$$
Thus we must have $\eta'=\lambda'$ with $\lambda'$ as depicted in Figure \ref{caseC(ii)}, where vertices $i$ and $j$ are in the closure of the same chamber.
For each such $\mu$ we have $\dim \Hom_n(\ind_{n-1}^{\lambda'}P_{n-1}(\mu), L_n(\lambda'))=1$.

We have seen that Case C covers all $j\in I(\lambda', \lambda^+)$ satisfying (\ref{Ii}).

\medskip

\noindent \textbf{Case D}. In $x_\mu$ both vertices $i-1$ and $i$ are labelled by $\wedge$. This case splits into six subcases D(i)-(vi) as depicted in Figures \ref{caseD(i)}-\ref{caseD(vi)}. 
Using the same argument as in Case C, it is easy to show that in each case we have $\dim \Hom_n(\ind_{n-1}^{\lambda'}P_{n-1}(\mu), L_n(\lambda'))=1$.
Note that in all cases the vertices $i$ and $j$ in $\lambda'$ must be in the same chamber otherwise $x_\mu$ wouldn't have the required cap diagram. 
We have that cases D(i)(iii)-(v) correspond to all vertices $j$ satisfying (\ref{Iii}), and Cases D(ii) and (vi) correspond to all vertices $j$ satisfying (\ref{Iiii}).

\begin{figure}[ht]
\includegraphics[width=8cm]{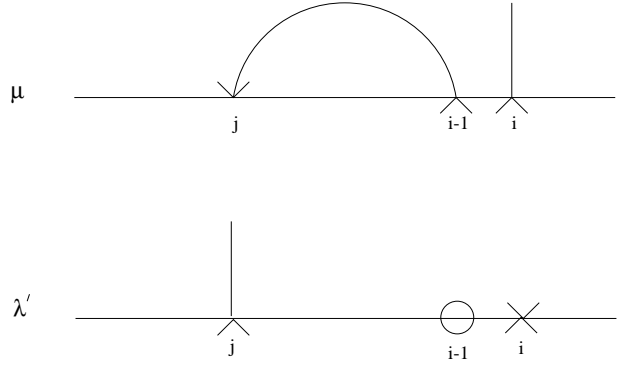}
\caption{Case D(i)}
\label{caseD(i)}
\end{figure}

\begin{figure}[ht]
\includegraphics[width=8cm]{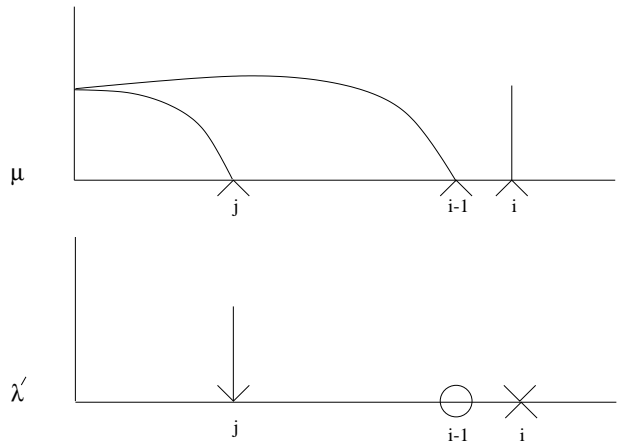}
\caption{Case D(ii)}
\label{caseD(ii)}
\end{figure}

\begin{figure}[ht]
\includegraphics[width=8cm]{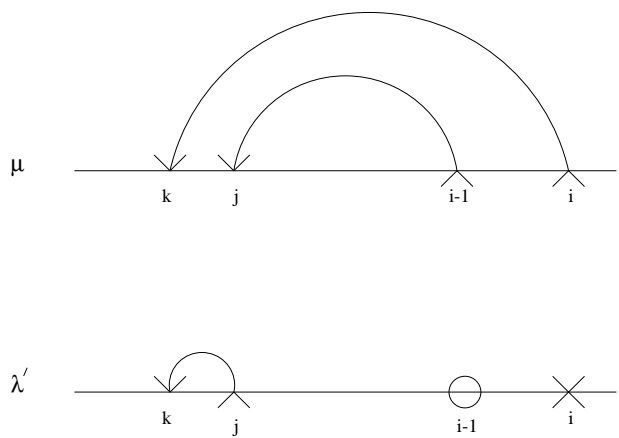}
\caption{Case D(iii)}
\label{caseD(iii)}
\end{figure}

\begin{figure}[ht]
\includegraphics[width=8cm]{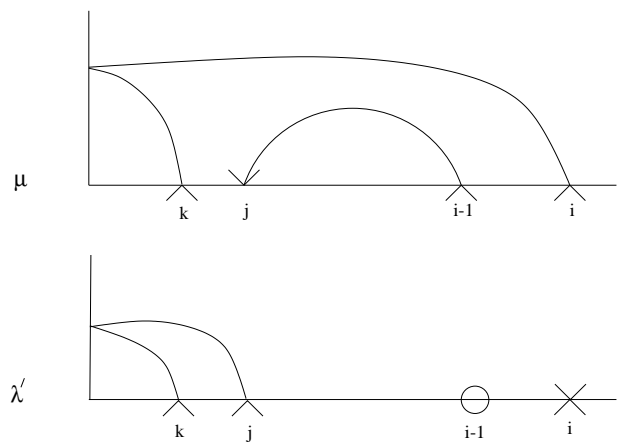}
\caption{Case D(iv)}
\label{caseD(iv)}
\end{figure}

\begin{figure}[ht]
\includegraphics[width=8cm]{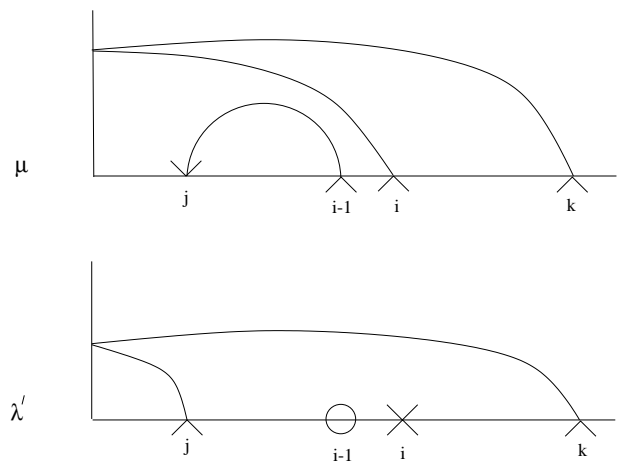}
\caption{Case D(v)}
\label{caseD(v)}
\end{figure}

\begin{figure}[ht]
\includegraphics[width=8cm]{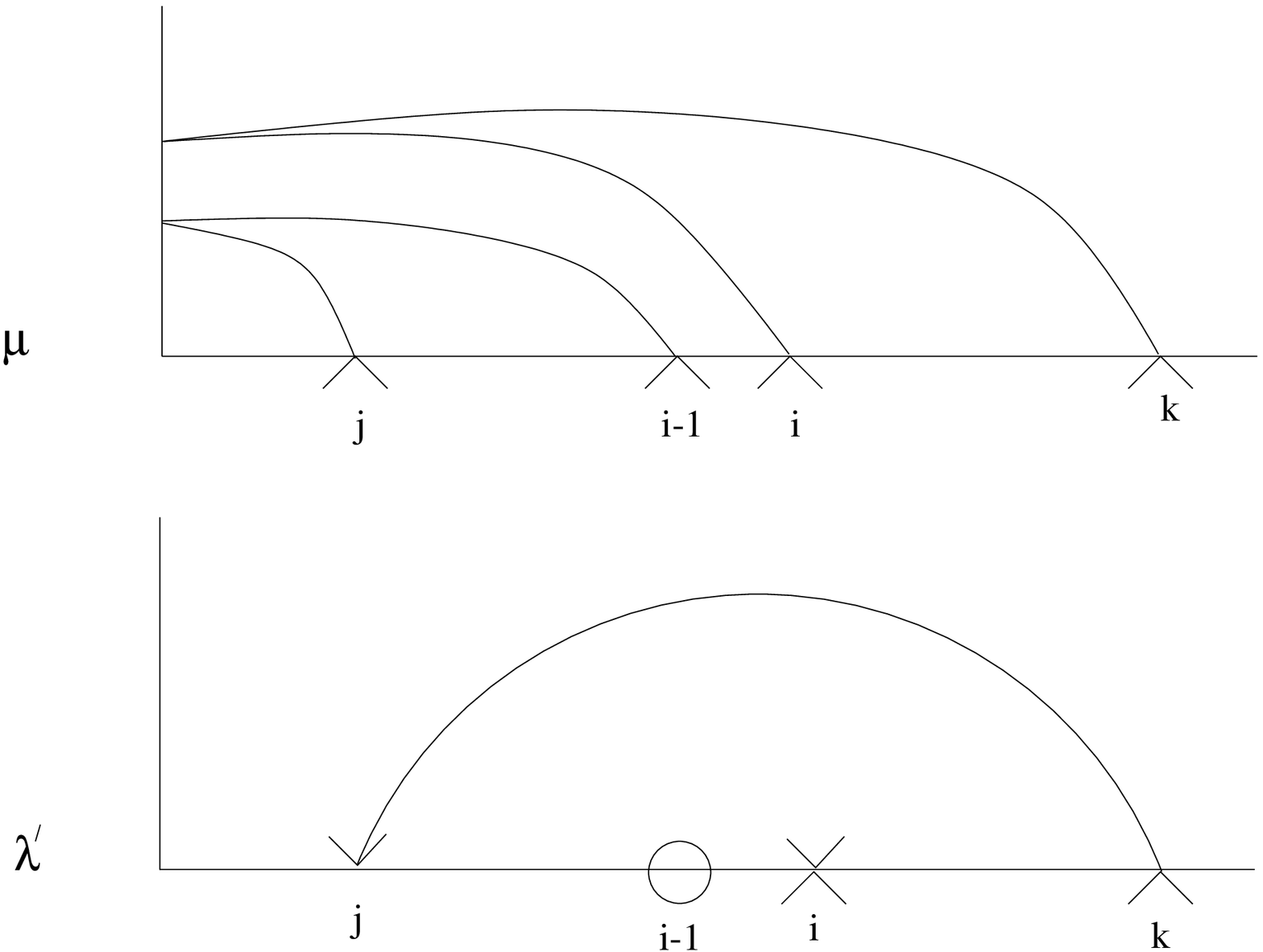}
\caption{Case D(vi)}
\label{caseD(vi)}
\end{figure}

\smallskip

Finally, using the fact that all simple modules are self-dual, the restriction must have the required module structure.
\end{proof} 

\begin{cor}\label{dimsimple} Let $\lambda\in A_\delta$. Then the dimension of $L_n(\lambda)$ is given by the number of walks on $\mathcal{Y}_\delta$ from $\emptyset$ to $\lambda$.

\begin{proof}
This follows immediately by induction on $n$ using Propositions \ref{caseI} and \ref{caseII}.
\end{proof}

\end{cor}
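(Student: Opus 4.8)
The plan is to prove the dimension formula by induction on $n$, exploiting the fact that for $\lambda\in A_\delta$ the restriction $\res_nL_n(\lambda)$ only ever involves Cases I and II, never Case III. The base case $n=0$ is immediate, since $L_0(\emptyset)$ is one-dimensional and there is exactly one walk of length $0$ from $\emptyset$ to $\emptyset$. For the inductive step I would use that restriction preserves dimension, so that, decomposing over blocks as at the start of Section 4,
$$\dim L_n(\lambda)=\dim\res_nL_n(\lambda)=\sum_{\mu\in\supp(\lambda)\cap\Lambda_{n-1}}\dim\bigl(\res_n^\mu L_n(\lambda)\bigr).$$
Since the number of length-$n$ walks on $\mathcal{Y}_\delta$ from $\emptyset$ to $\lambda$ equals the sum, over the neighbours $\mu$ of $\lambda$ in $\mathcal{Y}_\delta$, of the number of length-$(n-1)$ walks from $\emptyset$ to $\mu$, it suffices to match the two sums term by term: I must show that $\res_n^\mu L_n(\lambda)=L_{n-1}(\mu)$ exactly when $\mu\in A_\delta$, and that $\res_n^\mu L_n(\lambda)=0$ when $\mu\notin A_\delta$.

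First I would dispose of Case III. Since $\lambda\in A_\delta$ is $\delta$-regular we have ${\rm deg}_\delta(\lambda)={\rm deg}(\rho_\delta)$, and Lemma \ref{key} shows that ${\rm deg}(\rho_\delta)$ is the global minimum of ${\rm deg}_\delta$ over all partitions: the number of $\times$'s in any $x_\mu$ equals $\#(\circ)-m$, which is at least $\max(0,-m)={\rm deg}(\rho_\delta)$. Hence for $\mu\in\supp(\lambda)$ Lemma \ref{onebox} leaves only ${\rm deg}_\delta(\mu)={\rm deg}_\delta(\lambda)$ (Case I) or ${\rm deg}_\delta(\mu)={\rm deg}_\delta(\lambda)+1$ (Case II), and Case III is excluded. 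In Case I, Proposition \ref{caseI} gives $\res_n^\mu L_n(\lambda)=L_{n-1}(\mu)$; moreover $\mu$ is then $\delta$-regular and adjacent to $\lambda\in A_\delta$ in the $\delta$-regular subgraph, so it lies in the same connected component, i.e. $\mu\in A_\delta$, and the edge $\{\lambda,\mu\}$ is an edge of $\mathcal{Y}_\delta$. By the inductive hypothesis this term contributes $\dim L_{n-1}(\mu)=$ the number of length-$(n-1)$ walks from $\emptyset$ to $\mu$, matching exactly the corresponding summand of the walk recursion.

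The heart of the matter is Case II, where $\mu$ is $\delta$-singular (hence $\mu\notin A_\delta$) and I must show the contribution is $0$. In the notation preceding Proposition \ref{caseII}, $\mu$ plays the role of $\lambda'$ and $\lambda\in\{\lambda^+,\lambda^-\}$; by Proposition \ref{caseII} the restriction is $L_{n-1}(\lambda')$ if $\lambda=\lambda^+$ and $0$ if $\lambda=\lambda^-$. Since $\lambda'\triangleright\lambda^-$ while $\lambda'\triangleleft\lambda^+$, the vanishing I need is equivalent to the key combinatorial fact that \emph{a partition in $A_\delta$ has no $\delta$-singular downward neighbour}: every singular $\mu\in\supp(\lambda)$ must arise from $\lambda$ by \emph{adding} a box, forcing $\lambda=\lambda^-$ and $\res_n^\mu L_n(\lambda)=0$. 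This is the step I expect to be the main obstacle. For $\delta\ge 0$ it is clean: by the remark following Definition \ref{restrictedgraph}, $A_\delta$ consists of the partitions whose image under $e_\delta$ lies in the fundamental alcove, which for a strictly decreasing point amounts to all pairwise coordinate sums being negative; removing a box replaces $e_\delta(\lambda)$ by $e_\delta(\lambda)-\epsilon_k$, which only decreases these sums, so the result stays in the alcove and is therefore $\delta$-regular. For general $\delta$ the same conclusion should be read off from the explicit description of $A_\delta$ in Proposition \ref{A0}. Granting this, every Case II term vanishes and every $\delta$-singular neighbour is correctly absent from the walk count, so the two sums agree and the induction closes.
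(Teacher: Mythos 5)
Your proof is correct and takes essentially the same route as the paper: the published proof is exactly the induction on $n$ via Propositions \ref{caseI} and \ref{caseII} that you carry out. The supporting details you make explicit --- Case III being impossible since $\deg(\rho_\delta)$ is the global minimum of the degree of singularity (Lemma \ref{key}), and every Case II contribution vanishing because $A_\delta$ is closed under removing a box (Proposition \ref{A0}), so a singular neighbour forces $\lambda=\lambda^-$ --- are precisely what the paper's one-line proof leaves implicit in the word ``immediately''.
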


\section{Walk bases for simple modules}

\subsection{Leduc-Ram walk bases for generic simple modules}

In this section we recall the construction given in \cite{LR} of walk bases for simple modules for the generic Brauer algebra $B_n(u)$. Their construction uses two combinatorial objects associated with partitions which we now recall. We start with the King polynomials, which were originally derived from Weyl's character formula in \cite{EK}.

Let $\lambda$ be a partition, and denote by $[\lambda]$ its Young diagram. For each box $(i,j)\in [\lambda]$ we define
$$d(i,j)=\left\{ \begin{array}{ll} \lambda_i + \lambda_j -i-j+1 & \mbox{if $i\leq j$}\\ -\lambda^T_i -\lambda^T_j+i+j-1 & \mbox{if $i>j$}\end{array}\right.$$
We also write $h(i,j)$ for the usual hook length. We then define the King polynomial
$$P_\lambda (u)=\prod_{(i,j)\in [\lambda]}\frac{u-1+d(i,j)}{h(i,j)}.$$
For example, $P_\emptyset(u)=1$, $P_{(1)}(u)=u$, 
$$P_{(1^2)}(u)=\frac{u(u-1)}{2}, P_{(2)}(u)=\frac{(u+2)(u-1)}{2},$$
$$P_{(1^3)}(u)=\frac{u(u-2)(u-1)}{3!}, P_{(2,1)}(u)=\frac{(u+2)u(u-2)}{3}, P_{(3)}(u)=\frac{(u+4)u(u-1)}{3!},$$
$$P_{(1^4)}(u)=\frac{u(u-3)(u-2)(u-1)}{4!}, P_{(2,1^2)}(u)=\frac{(u+2)u(u-3)(u-1)}{4.2},\ldots$$

We denote the set of all walks on the Young graph $\mathcal{Y}$ by $\Omega$ and the subset of all walks of length n starting at $\emptyset$ and ending at $\lambda$ by $\Omega^n(\lambda)$.
For a walk $S\in \Omega$, we write $S=(s(0), s(1), s(2),...)$ where $s(m)$ is the m-th partition in the walk $S$. We then define $\Omega_m(S)$ to be the set of all walks $T$ that differs from $S$ in at most position $m$, that is $t(j)=s(j)$ for all $j\neq m$. If $T\in \Omega_m(S)$ we say that $(S,T)$ form an $m$-diamond pair, and in this case we define
$$\Diamond_m(S,T)= \left\{ \begin{array}{ll} \pm (s(m+1)_k - k - t(m)_l +l) & \begin{array}{l} \mbox{if $t(m)=s(m-1)\pm \epsilon_l$}\\ \mbox{and $s(m+1)=s(m)\pm \epsilon_k$}\end{array}\\
 &  \\
\pm (u + t(m)_l - l +s(m+1)_k -k) & \begin{array}{l}\mbox{if $t(m)=s(m-1)\mp \epsilon_l$} \\ \mbox{ and $s(m+1)=s(m)\pm \epsilon_k$}\end{array}\end{array}\right.$$

\begin{thm} \cite{LR}(6.22) There is an action of the generic Brauer algebra $B_n(u)$ on the complex vector space $\Pi^\lambda$ with basis $\Omega^n(\lambda)$ given by
$$\sigma_m T = \sum_{S\in \Omega_m(T)}(\sigma_m)_{ST}S$$
$$e_m T= \sum_{S\in \Omega_m(T)}(e_m)_{ST}S$$
where
$$(\sigma_m)_{SS}=\left\{ \begin{array}{ll} \frac{1}{\Diamond_m(S,S)} & \mbox{if $s(m-1)\neq s(m+1)$} \\ \frac{1}{\Diamond_m(S,S)} \left( 1 - \frac{P_{s(m)}(u)}{P_{s(m-1)}(u)}\right) & \mbox{otherwise} \end{array} \right.$$
and for $S\neq T$ 
$$(\sigma_m)_{ST}=\left\{ \begin{array}{ll} \sqrt{\frac{(\Diamond_m(S,S)-1)(\Diamond_m(S,S)+1)}{\Diamond_m(S,S)^2}} & \mbox{if $s(m-1)\neq s(m+1)$} \\ - \frac{1}{\Diamond_m(S,T)} \left(  \frac{\sqrt{P_{s(m)}(u)P_{t(m)}(u)}}{P_{s(m-1)}(u)}\right) & \mbox{otherwise} \end{array} \right. ,$$
and similarly for any $S,T$
$$(e_m)_{ST}=\left\{ \begin{array}{ll} \frac{\sqrt{P_{s(m)}(u)P_{t(m)}(u)}}{P_{s(m-1)}(u)} & \mbox{if $s(m-1)= s(m+1)$}\\
0 & \mbox{otherwise} \end{array} \right.$$
\end{thm}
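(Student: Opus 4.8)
The statement is a presentation result, so the plan is to verify that the prescribed matrices $((\sigma_m)_{ST})$ and $((e_m)_{ST})$ satisfy the defining relations of $B_n(u)$ acting on the basis $\Omega^n(\lambda)$ of $\Pi^\lambda$. The decisive structural feature is \emph{locality}: the operators $\sigma_m$ and $e_m$ alter a walk $T$ only in its $m$-th position, and every matrix entry depends solely on the local segment $(t(m-1),t(m),t(m+1))$ together with the alternative value $s(m)$. Hence any relation involving only generators with indices differing by at least two holds automatically, the operators acting on disjoint coordinates; and the whole verification reduces to finitely many identities about a walk restricted to the vertices $t(m-1),\dots,t(m+2)$. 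Throughout one separates two kinds of local configuration: the \emph{$e$-active} diamonds, where $s(m-1)=s(m+1)$ and the King polynomials enter, and the remaining diamonds, where $e_m$ acts as $0$ and $\sigma_m$ is governed purely by the $\Diamond_m(S,S)$.

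First I would treat the single-generator and one-index relations. On a non-$e$-active diamond the $\sigma$-block is an at most $2\times2$ instance of Young's orthogonal form, with $\Diamond_m(S,S)$ playing the role of the (inverse) axial distance, so $\sigma_m^2=1$ is the identity $(\sigma_m)_{SS}^2+(\sigma_m)_{ST}^2=1$, which holds by the explicit shape of the entries; and $\sigma_m e_m=e_m\sigma_m=e_m$ is an immediate local check. The crucial relation is $e_m^2=u\,e_m$: on an $e$-active diamond one computes
$$
(e_m^2)_{ST}=\frac{\sqrt{P_{s(m)}(u)\,P_{t(m)}(u)}}{P_{s(m-1)}(u)^2}\sum_{R}P_{r(m)}(u),
$$
where $R$ runs over the walks of the diamond, i.e. $r(m)$ runs over all partitions adjacent to $s(m-1)$ in $\mathcal{Y}$. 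Thus the relation is \emph{equivalent} to the King-polynomial branching identity
$$
\sum_{\nu\,\triangleright\, s(m-1)}P_\nu(u)\;+\;\sum_{\nu\,\triangleleft\, s(m-1)}P_\nu(u)\;=\;u\,P_{s(m-1)}(u).
$$

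Second I would handle the two-adjacent-generator relations: the braid relation $\sigma_m\sigma_{m+1}\sigma_m=\sigma_{m+1}\sigma_m\sigma_{m+1}$, the relations $e_m e_{m\pm1}e_m=e_m$, and the remaining mixed relations tying an adjacent crossing to a cap. By locality each reduces to a statement about the four vertices $t(m-1),\dots,t(m+2)$, so I would enumerate the finitely many local configurations (recording, for each pair of consecutive steps, whether a box is added or removed and whether the same coordinate $\epsilon_k$ is reused) and verify the relation by multiplying the corresponding two- or three-step transfer matrices. The braid relation is once more the quantized Young orthogonal form; the relations $e_m e_{m\pm1}e_m=e_m$ and the mixed relations couple the two branches in the definition of $\Diamond_m(S,T)$ and again collapse to ratios of King polynomials.

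The main obstacle is the King-polynomial branching identity above, together with the companion sums that surface in $e_m e_{m\pm1}e_m=e_m$ and in the mixed relations: this is precisely where the combinatorics of the $\Diamond_m$ and of the $P_\lambda(u)$ must conspire, and it is what pins the loop parameter to $u$. I would prove it either directly, by telescoping the product formula for $P_\lambda(u)$ and inducting on $|\lambda|$, or conceptually, reading it off Weyl's character formula as the branching rule for tensoring with the defining representation of the relevant classical group, under which $u$ plays the role of $\pm N$. Once this identity and its companions are in hand, every remaining step is either automatic from locality or a finite, if tedious, local matrix computation modelled on Young's orthogonal form.
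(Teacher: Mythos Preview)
The paper does not give its own proof of this theorem: it is stated as a quotation of Leduc--Ram \cite{LR}(6.22), with no argument supplied. So there is nothing in the present paper to compare your proposal against; your sketch is an outline of a direct proof where the authors simply cite the result.

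That said, your outline is essentially the route taken in \cite{LR} itself: reduce by locality to small blocks indexed by the local segment of the walk, identify the non-$e$-active blocks with Young's orthogonal form in the axial-distance variable $\Diamond_m(S,S)$, and reduce the $e$-active relations to identities among King polynomials, chief among them the branching identity $\sum_{\nu\in\supp(\mu)}P_\nu(u)=u\,P_\mu(u)$. Two small points are worth flagging. First, on an $e$-active diamond the block of $\sigma_m$ is not $2\times 2$ but of size $|\supp(s(m-1))|$, so the check of $\sigma_m^2=1$ there is not covered by the orthogonal-form calculation you describe for the non-$e$-active case; it requires a separate identity coupling the diagonal terms $\tfrac{1}{\Diamond_m(S,S)}\bigl(1-\tfrac{P_{s(m)}}{P_{s(m-1)}}\bigr)$ with the off-diagonal ones, and this again comes down to King-polynomial sums. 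Second, the mixed relations such as $\sigma_m e_{m+1}e_m=\sigma_{m+1}e_m$ and $e_{m+1}e_m\sigma_{m+1}=e_{m+1}\sigma_m$ are the most laborious part of the verification; you acknowledge them but should be aware that this is where the bulk of the work in \cite{LR} lies, and the ``companion sums'' you allude to are genuinely more intricate than the single branching identity. With those caveats, your plan is sound and matches the original source.
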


We will give a geometric interpretation of the King polynomials $P_\lambda(u)$ and the Brauer diamonds $\Diamond_m(S,T)$ in the next two sections. This will allow us to define an action of the Brauer algebra $B_n(\delta)$ on the span of all $\delta$-restricted walks in $\Omega^n(\lambda)$.

\subsection{A geometric interpretation of the roots of the King polynomials}

Recall the definition of $\delta$-degree of singularity given in Definition \ref{regsing}.

\begin{thm}\label{king} Fix $\delta\in \mathbb{Z}$ and let $\lambda\in \Lambda$.
Let $m_\delta(\lambda)$ be the multiplicity of $\delta$ as a root of the King polynomial $P_\lambda(u)$. Then we have 
$$m_\delta(\lambda)\, = \, {\rm deg}_\delta(\lambda)-{\rm deg}_\delta(\emptyset).$$
In particular, we have that $P_\lambda(\delta)\neq 0$ if and only if $\lambda$ is $\delta$-regular.
\end{thm}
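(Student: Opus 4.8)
The plan is to reduce the statement to a purely combinatorial count on the weight diagram $x_\lambda$ and then to evaluate that count by a lattice-path argument. First I would observe that the hook lengths $h(i,j)$ are positive integers independent of $u$, so the denominator of $P_\lambda(u)$ contributes neither zeros nor poles; hence the multiplicity $m_\delta(\lambda)$ of $\delta$ as a root equals the number of boxes $(i,j)\in[\lambda]$ for which the numerator factor $u-1+d(i,j)$ vanishes at $u=\delta$, i.e.
$$m_\delta(\lambda)=\#\{(i,j)\in[\lambda]:d(i,j)=1-\delta\}.$$

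Next I would split this count according to the two cases in the definition of $d(i,j)$. For $i\le j$ the equation $d(i,j)=1-\delta$ rearranges to $-\lambda_i-\lambda_j+i+j=\delta$, which is exactly the condition of Proposition \ref{ylambda}(ii); for $i>j$ it rearranges to $\lambda^T_i+\lambda^T_j-i-j+2=\delta$, the condition of Proposition \ref{ylambda}(i). Using the two bijections of Proposition \ref{ylambda} together with the admissibility conditions (\ref{admissiblecirc}) and (\ref{admissibletimes}), these boxes are counted respectively by the number $N_\circ$ of $\circ$'s in $x_\lambda$ satisfying (\ref{admissiblecirc}) and the number $N_\times$ of $\times$'s satisfying (\ref{admissibletimes}), so that $m_\delta(\lambda)=N_\circ+N_\times$. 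Since ${\rm deg}_\delta(\lambda)$ equals the total number of $\times$'s in $x_\lambda$, and ${\rm deg}_\delta(\emptyset)=\max(0,-m)$ by Example \ref{empty} (where $\delta=2m$ or $2m+1$), the theorem becomes the identity $N_\circ+N_\times={\rm deg}_\delta(\lambda)-{\rm deg}_\delta(\emptyset)$.

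To prove this identity I would encode the $\circ$'s and $\times$'s of $x_\lambda$, read from left to right, as the steps of a $\pm1$ lattice path starting at height $0$: a $\times$ is an up-step and a $\circ$ a down-step, so that the height just before the symbol at a vertex $n$ equals $\#(\times\text{ left of }n)-\#(\circ\text{ left of }n)$, and the path ends at height $-m$ by Lemma \ref{key}. The admissibility conditions then say precisely that a $\times$ lies in $[\lambda]$ iff its up-step starts strictly below the threshold $\tau$ (with $\tau=m-1$ if $\delta=2m$ and $\tau=m$ if $\delta=2m+1$), and a $\circ$ lies in $[\lambda]$ iff its down-step starts strictly above $\tau$. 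Writing $\bar N_\times={\rm deg}_\delta(\lambda)-N_\times$ for the complementary count of $\times$'s, I would compute $\bar N_\times-N_\circ$ by summing, over all half-integer levels above $\tau$, the net number of up-crossings minus down-crossings, each of which is determined only by the endpoints of the path via the standard crossing identity. A short calculation in each parity then gives $\bar N_\times-N_\circ=\max(0,-m)={\rm deg}_\delta(\emptyset)$, whence $N_\circ+N_\times={\rm deg}_\delta(\lambda)-{\rm deg}_\delta(\emptyset)$ as required; the final ``in particular'' is then immediate, since by Definition \ref{regsing} the partition $\lambda$ is $\delta$-regular exactly when ${\rm deg}_\delta(\lambda)={\rm deg}_\delta(\emptyset)$.

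The main obstacle is this last step: correctly translating the two admissibility inequalities into height conditions on the path and carrying out the level-crossing bookkeeping so that the answer comes out as $\max(0,-m)$ uniformly in the sign of $m$ and in both parities of $\delta$. I would note in passing that a direct induction on $|\lambda|$ using Lemma \ref{onebox} looks unattractive here, because adding a box alters the values $d(i,j)$ of many existing boxes at once; the path encoding sidesteps this and reduces everything to counting crossings of a walk with fixed endpoints, which is routine.
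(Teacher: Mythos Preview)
Your proposal is correct and follows essentially the same route as the paper: both reduce via Proposition~\ref{ylambda} to counting the ``admissible'' $\times$'s and $\circ$'s in $x_\lambda$, encode this sequence as a lattice path, and read off that the count equals ${\rm deg}_\delta(\lambda)-{\rm deg}_\delta(\emptyset)$. The only cosmetic differences are that the paper uses a two-dimensional east/north path against a diagonal line (and identifies the answer directly as $\min(\#\circ,\#\times)$ from the picture) whereas you use the equivalent one-dimensional $\pm1$ path against a horizontal threshold and spell out the level-crossing count explicitly.
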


\begin{proof}
Write $a={\rm min}\{\#(\mbox{$\circ$'s in $x_\lambda$}), \#(\mbox{$\times$'s in $x_\lambda$})\}$ and let $\delta = 2m$ or $2m+1$ for some $m\in \mathbb{Z}$.
Using Example \ref{empty} and Lemma \ref{key} we have that for $m\geq 0$,
$$a=\#(\mbox{$\times$'s in $x_\lambda$}) = {\rm deg}_\delta(\lambda)={\rm deg}_\delta(\lambda) - {\rm deg}_\delta(\emptyset)$$
as ${\rm deg}_\delta(\emptyset)=0$; and for $m<0$ we have
\begin{eqnarray*}
a &=& \#(\mbox{$\circ$'s in $x_\lambda$})\\
&=& \#(\mbox{$\circ$'s in $x_\lambda$}) - \#(\mbox{$\times$'s in $x_\lambda$}) + \#(\mbox{$\times$'s in $x_\lambda$})\\
&=& m + \#(\mbox{$\times$'s in $x_\lambda$})\\
&=& \#(\mbox{$\times$'s in $x_\lambda$}) - \#(\mbox{$\times$'s in $x_\emptyset$})\\
&=& {\rm deg}_\delta(\lambda) - {\rm deg}_\delta(\emptyset).
\end{eqnarray*}
Thus it's enough to show that $m_\delta(\lambda) = a$.\\
Now by definition of $P_\lambda(u)$ and Proposition \ref{ylambda} we have that $m_\delta(\lambda)$ is precisely the number of  $\times$'s in $x_\lambda$ satisfying (\ref{admissibletimes}) added to the number of $\circ$'s in $x_\lambda$ satisfying (\ref{admissiblecirc}).
We can represent the sequence of $\times$'s and $\circ$'s appearing in $x_\lambda$ reading from left to right by a graph as follows. Start at $(0,0)$ and for each term in the sequence add $(1,0)$ if it is a $\circ$, or add $(0,1)$ if it is a $\times$.
The graph  is given in Figure \ref{graphpositivem} for $m\geq 0$ and in Figure \ref{graphnegativem} for $m<0$.

\begin{figure}[ht]
\includegraphics[width=8cm]{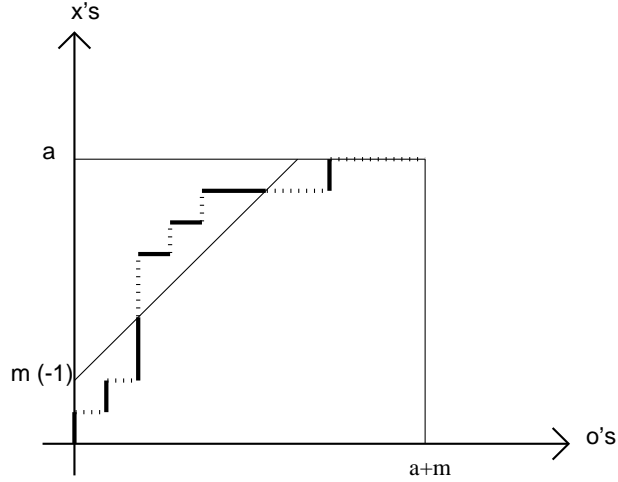}
\caption{Graph representing the sequence of $\times$ and $\circ$ in $x_\lambda$ for $m\geq 0$}
\label{graphpositivem}
\end{figure}

\begin{figure}[ht]
\includegraphics[width=8cm]{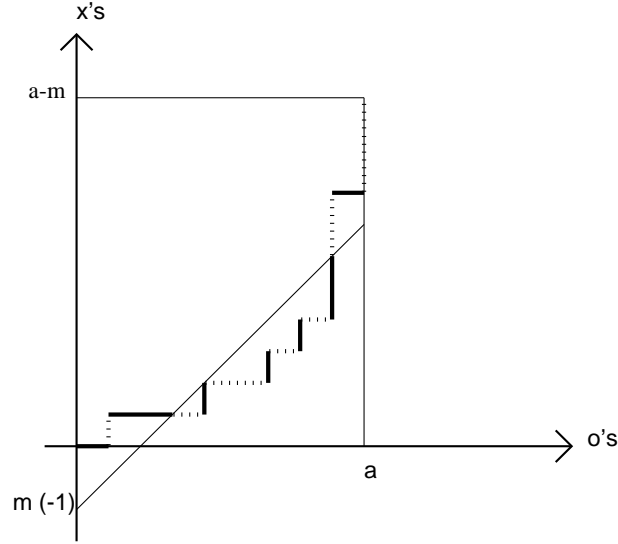}
\caption{Graph representing the sequence of $\times$ and $\circ$ in $x_\lambda$ for $m< 0$}
\label{graphnegativem}
\end{figure}

Now observe that the admissibility conditions (\ref{admissibletimes}) and (\ref{admissiblecirc}) can be rephrased as follows. A $\times$ (resp. $\circ$) satisfies (\ref{admissibletimes}) (resp. (\ref{admissiblecirc})) if and only if the corresponding step in the graph is below (resp. above) the line $y=x+m-\delta_{\delta,2m}$. Admissible (resp. non-admissible) steps are represented by solid lines (resp. dotted lines) in the graphs.
It follows immediately that the total number of admissible $\times$'s and $\circ$'s is equal to $a$.

\end{proof}

\begin{rem}
It was shown in \cite[Corollary (3.5)]{W} that $\lambda\in A_\delta$ if and only if $P_\mu(\delta)\neq 0$ for all $\mu\subseteq \lambda$. Theorem \ref{king} strengthens this result to give a full characterisation of the singularities of the King polynomial in terms of the $\delta$-degree of singularity.
\end{rem}

\subsection{A geometric interpretation of the Brauer diamonds}

In this section we give a geometric interpretation of the Brauer diamonds when we specialise $u=\delta$.

Recall the isomorphism between the Young graph $\mathcal{Y}$ and $\ZZZ_+(\rho_\delta)$ given in Section 2.2. Using this we will view walks on $\mathcal{Y}$ as walks in $\ZZZ_+(\rho_\delta)$ where each edge is of the form $x\rightarrow x\pm \epsilon_i$ for some $x\in \mathbb{R}^\mathbb{N}$ and some $i\geq 1$.

Let $(S,T)$ be an $m$-diamond pair. The Brauer diamond only depends on the $m-1$, $m$ and $m+1$ steps in the walks, so we will write $S=(x(m-1),x(m),x(m+1)) $ and $T=(x(m-1), y(m), x(m+1))$, where the $x(i)$'s and $y(i)$'s are in $\RR^\NN$. 

\begin{thm}\label{diamond} The Brauer diamonds satisfy the following identities.\\
\textbf{Case 1.} If $S=(x, x\pm \epsilon_i, x\pm \epsilon_i\pm \epsilon_j)$, $T=(x, x\pm \epsilon_j, x\pm \epsilon_i\pm \epsilon_j)$,\\
then we have for $i\neq j$
$$\Diamond(S,T)=\Diamond(T,S)=0$$
$$\Diamond(S,S)=-\Diamond(T,T)=\langle x,\epsilon_i - \epsilon_j\rangle ,$$
and for $i=j$ we have
$$\Diamond(S,S)=-1.$$
\textbf{Case 2.} If $S=(x, x\pm\epsilon_i, x\pm\epsilon_i \mp \epsilon_j)$, $T=(x, x\mp\epsilon_j, x\pm\epsilon_i \mp \epsilon_j)$ with $i\neq j$, \\
then we have
$$\Diamond(S,T)=\Diamond(T,S)=0$$
$$\Diamond(S,S)=-\Diamond(T,T)=\pm\langle x,\epsilon_i + \epsilon_j\rangle .$$ 
\textbf{Case 3.} If $S=(x, x+\alpha , x)$, $T=(x, x+\beta, x)$ with $\alpha, \beta\in \{\pm \epsilon_i\, : \, i\geq 1\}$,\\
then we have
$$\Diamond(S,T)=\Diamond (T,S)= \langle x, \alpha + \beta \rangle  +1$$
\end{thm}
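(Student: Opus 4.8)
The plan is to compute all three cases by direct substitution into the definition of $\Diamond_m(S,T)$, reading off the relevant entries $x(m-1)_l$, $x(m)_k$ and so on from the walk data, and then recognising the resulting expressions as inner products on $\RR^\NN$. Recall that a walk on $\mathcal{Y}$ is identified via $e_\delta$ with a walk on $\ZZZ_+(\rho_\delta)$, so each partition $s(p)$ corresponds to the point $x(p)=e_\delta(s(p))=s(p)^T+\rho_\delta$, and an edge $s(p)\triangleright s(p-1)$ (adding a box) becomes $x(p)=x(p-1)+\epsilon_i$ while removing a box becomes $x(p)=x(p-1)-\epsilon_i$. The quantity $s(m+1)_k-k$ in the definition is, after the shift by $\rho_\delta$, essentially the $k$-th coordinate of $x(m+1)$ (up to the fixed $\rho_\delta$ offset), and likewise $t(m)_l-l$ is a coordinate of $y(m)$; so the combinatorial differences in the definition of $\Diamond$ should collapse into differences of coordinates of the points $x,y$, i.e. into inner products $\langle x,\epsilon_i\pm\epsilon_j\rangle$.

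First I would handle \textbf{Case 1}, where both steps $m$ and $m+1$ carry the same sign $\pm$ (so both add boxes, or both remove boxes). Here $S$ and $T$ are genuinely distinct walks only when $i\neq j$, and the defining formula falls into its first (top) branch since the two consecutive steps share a sign. For the off-diagonal terms $\Diamond(S,T)$ and $\Diamond(T,S)$ one substitutes the data of one walk into the index slots of the other; I expect these to vanish identically because the relevant coordinate differences cancel once the shared endpoint $x(m+1)$ is used. For the diagonal $\Diamond(S,S)$ one reads the formula with $T=S$, obtaining a coordinate difference of the form $x_i-x_j$, which is exactly $\langle x,\epsilon_i-\epsilon_j\rangle$; the antisymmetry $\Diamond(T,T)=-\Diamond(S,S)$ then follows by swapping the roles of $i$ and $j$. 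The degenerate subcase $i=j$ (where $T$ is forced to equal $S$ and the intermediate point returns along the same axis) should give the constant $-1$ directly from the formula, since the coordinate differences telescope to a pure constant shift.

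For \textbf{Case 2} the two steps $m$ and $m+1$ carry opposite signs, so the second (bottom) branch of the definition of $\Diamond$ is the relevant one, and the variable $u$ appears explicitly; after specialising $u=\delta$ and absorbing $\rho_\delta$ into the coordinates, the $u$-term combines with the coordinate data to produce $\pm\langle x,\epsilon_i+\epsilon_j\rangle$ rather than a difference, reflecting that here one step adds while the other removes a box. \textbf{Case 3} is the ``cap'' case where the walk returns to its starting point $x$, so both $\alpha$ and $\beta$ are single signed basis vectors with $x+\alpha$ and $x+\beta$ both one step from $x$; this again uses the $u$-branch, and the expected answer $\langle x,\alpha+\beta\rangle+1$ should emerge once the coordinates of $x+\alpha$ and $x+\beta$ are substituted and the constant $+1$ is tracked through the $-l$, $-k$ index shifts.

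The main obstacle is purely bookkeeping: one must be scrupulous about the $\pm\epsilon_i$ conventions and, crucially, about how the offset $\rho_\delta=-\frac{\delta}{2}(1,1,1,\dots)+\rho$ interacts with the $-k$ and $-l$ index corrections and with the explicit $u=\delta$ in the second branch. The $-\frac{\delta}{2}$ shift in each coordinate is what converts the bare $u$ appearing in the Leduc--Ram formula into the correct specialised value, and getting the signs to line up so that Case 2 yields a sum of coordinates while Case 1 yields a difference requires care. I would therefore fix, once and for all, the dictionary $x(p)_k=s(p)^T_k-k-\frac{\delta}{2}$ at the start, verify it against the embedding $e_\delta$, and then let each of the three cases reduce to a short, mechanical substitution, checking the constant terms ($-1$ in Case 1 with $i=j$, and $+1$ in Case 3) with particular attention since those are where off-by-one index errors would hide.
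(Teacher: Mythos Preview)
Your plan is correct and matches the paper's own proof: both proceed by direct substitution into the Leduc--Ram definition of $\Diamond_m$, case by case, and then identify the result as an inner product via the embedding $e_\delta$; the paper in fact only writes out (half of) Case 1 and declares the remaining cases similar, exactly as you propose.

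One point to tighten before you compute: your heuristic that ``$s(m+1)_k-k$ is essentially the $k$-th coordinate of $x(m+1)$'' conflates row and column indices. In the original $\Diamond$-formula, $k$ and $l$ are \emph{row} indices of the partitions (since $s(m+1)=s(m)\pm\epsilon_k$ there means adding or removing a box in row $k$), whereas the coordinates of $x=e_\delta(\lambda)=\lambda^T+\rho_\delta$ are indexed by \emph{columns}. The bridge, which the paper writes out explicitly, is that if the box added in row $k$ sits in column $i$ then $\lambda_k+1=i$ and $k=\lambda^T_i+1$, so that e.g.\ $s(m+1)_k-k$ becomes $-x_j-\tfrac{\delta}{2}$ for the appropriate column $j$, not $x_k$. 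Your dictionary $x(p)_i=s(p)^T_i-\tfrac{\delta}{2}-(i-1)$ is the right target, but the passage from the row-indexed $\Diamond$-formula into it is the one genuinely non-mechanical step and deserves to be made explicit rather than absorbed into ``should collapse''.
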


\begin{proof}
We will give a proof for (half of) Case 1, the other cases can be computed similarly.
Going back to the original definition, we consider the Brauer diamond in $\mathcal{Y}$ given by $S=(\lambda, \lambda + \epsilon_k, \lambda + \epsilon_k + \epsilon_l)$, $T=(\lambda, \lambda + \epsilon_l, \lambda + \epsilon_k + \epsilon_l)$. 
Suppose that box $\epsilon_k$ is added in column $i$ and boxes $\epsilon_l$ is added in column $j$.
Note that for $S\neq T$ we must have $k\neq l$ and $i\neq j$. In this case we have
$$\Diamond(S,T)=(\lambda_l+1)-l-(\lambda_l+1)+l=0,$$
$$\Diamond(T,S)=(\lambda_k+1)-k-(\lambda_k+1)+k=0$$
and 
\begin{eqnarray*}
\Diamond(S,S)&=& (\lambda_l+1)-l-(\lambda_k+1)+k\\
&=& j-(\lambda'_j-1)-i+(\lambda'_i-1)\\
&=&(\lambda'_i -\frac{\delta}{2} -i+1)-(\lambda'_j - \frac{\delta}{2} -j+1)\\
&=&\langle e_\delta(\lambda), \epsilon_i - \epsilon_j\rangle\\
&=& -\Diamond(T,T).
\end{eqnarray*}
If $i=j$, then $l=k+1$ and we have
$$\Diamond(S,S)=(\lambda_{k+1}+1)-(k+1)-(\lambda_k+1)+k=-1.$$
Finally, if $k=l$ then $j=i+1$ and we have
\begin{eqnarray*}
\Diamond(S,S)&=& (\lambda_k +2)-k-(\lambda_k +1)+k\\
&=& 1 \\
&=& (\lambda'_i -\frac{\delta}{2}-i+1)-(\lambda'_{i+1}-\frac{\delta}{2}-(i+1)+1)\\
&=&\langle e_\delta(\lambda), \epsilon_i - \epsilon_{i+1}\rangle.
\end{eqnarray*}
\end{proof}

\begin{figure}[ht]
\includegraphics[width=10cm]{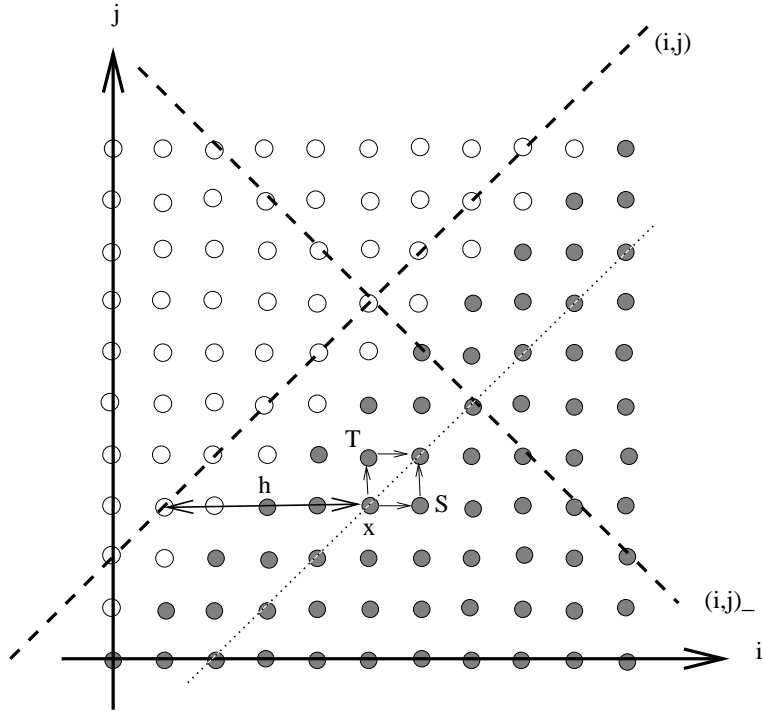}
\caption{\textbf{Case 1}: $S=(x,x+\epsilon_i, x+\epsilon_i+\epsilon_j)$, $T=(x,x+\epsilon_j, x+\epsilon_i+\epsilon_j)$ with $i<j$ and $h=\langle x, \epsilon_i-\epsilon_j\rangle$. 
Projection of $\mathbb{R}^{\mathbb{N}}$ onto the $ij$-plane, showing the reflection hyperplane. Fibres containing partitions are shaded.}
\label{ijplane1}
\end{figure}

\medskip

\begin{figure}[ht]
\includegraphics[width=10cm]{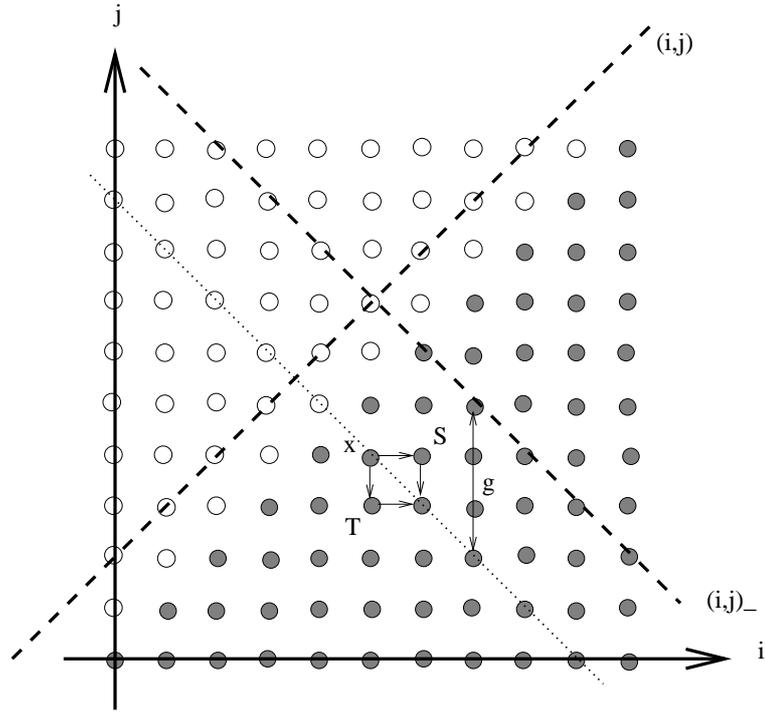}
\caption{\textbf{Case 2}: $S=(x,x+\epsilon_i, x+\epsilon_i-\epsilon_j)$, $T=(x,x-\epsilon_j, x+\epsilon_i-\epsilon_j)$ with $i<j$ and $g=\langle x,  \epsilon_i +\epsilon_j \rangle$}
\label{ijplane2}
\end{figure}

\medskip

\begin{figure}[ht]
\includegraphics[width=10cm]{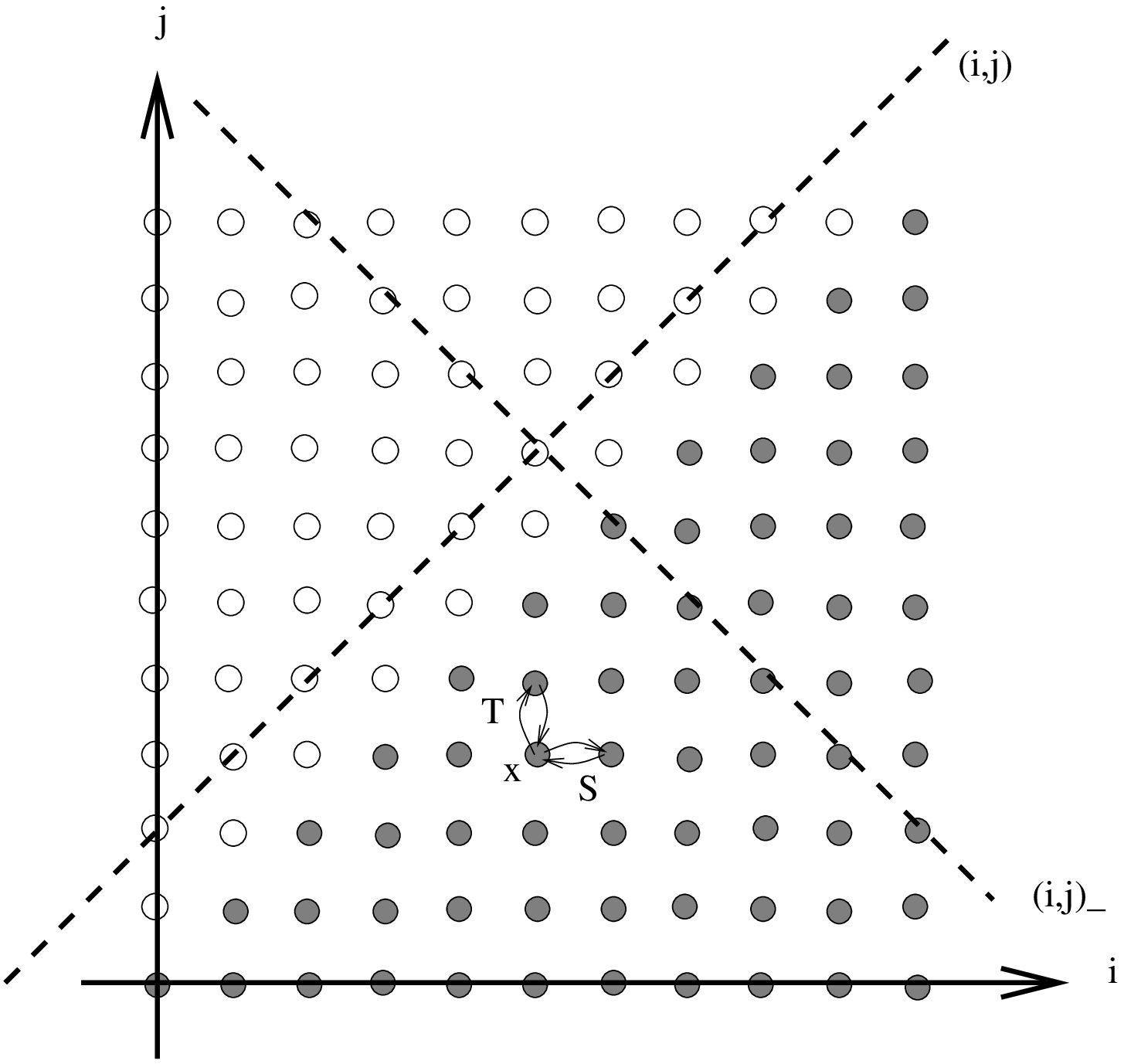}
\caption{\textbf{Case 3}: $S=(x,x+\epsilon_i, x)$, $T=(x,x+\epsilon_j, x)$ with $i<j$.}
\label{ijplane3}
\end{figure}

\medskip

\begin{figure}[ht]
\includegraphics[width=10cm]{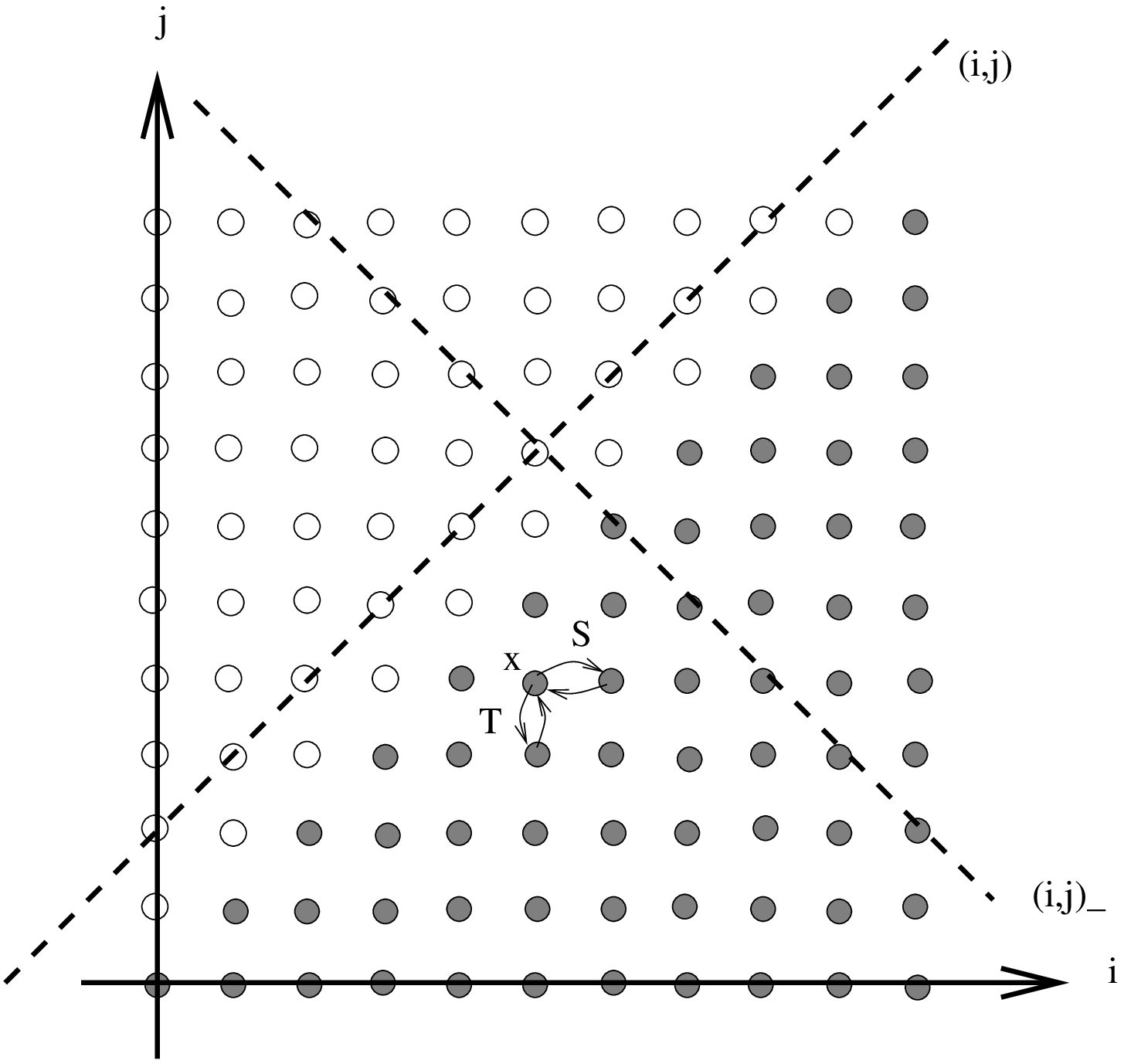}
\caption{\textbf{Case 3}: $S=(x,x+\epsilon_i, x)$, $T=(x,x-\epsilon_j, x)$ with $i<j$.}
\label{ijplane4}
\end{figure}

\subsection{Walk bases for $\delta$-restricted simple modules} 

Recall the definition of the set of $\delta$-restricted partitions $A_\delta$ and the $\delta$-restricted Young graph $\mathcal{Y}_\delta$ given in Section 2.2. We now show that if we truncate the Leduc-Ram representations $\Pi^\lambda$ to walks on $\mathcal{Y}_\delta$ then this gives a well-defined representation of $B_n(\delta)$, by specialising $u$ to $\delta$. We start by giving an explicit description of $A_\delta$.

\begin{prop}\label{A0} A partition $\lambda$ belongs to $A_\delta$ if and only if one of the following conditions holds.\\
(i) $\delta \geq 0$ and $\lambda^T_1 + \lambda^T_2 \leq \delta$.\\
(ii) $\delta =-2m$ (for some $m\in \NN$) and $\lambda_1\leq m$.\\
(iii) $\delta = -2m +1$ (for some $m\in \NN$) and $\lambda_1 + \lambda_2\leq 2m+1$.
\end{prop}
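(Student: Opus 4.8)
The plan is to characterize $A_\delta$ by translating the defining condition --- that every vertex on the connecting $\delta$-restricted walk is $\delta$-regular --- into the combinatorics of weight diagrams established in Section 3. Recall from Definition \ref{restrictedgraph} that $\lambda \in A_\delta$ precisely when there is a path in $\ZZZ_+$ from $\rho_\delta$ to $e_\delta(\lambda)$ all of whose vertices are $\delta$-regular, i.e.\ have $\delta$-degree of singularity equal to $\deg(\rho_\delta)=\deg_\delta(\emptyset)$. By Lemma \ref{key}, the number of $\times$'s minus the number of $\circ$'s in any weight diagram is the constant $m$, and by the observation following Example \ref{empty} the $\delta$-degree of singularity equals the number of $\times$'s. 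So a partition is $\delta$-regular exactly when its weight diagram attains the minimal possible number of $\times$'s for the given $\delta$: that is, no $\circ$'s when $m\geq 0$ (giving exactly $m$ $\times$'s by cancellation never occurring), and no $\times$'s when $m<0$ (giving exactly $-m$ many $\circ$'s). I would establish this ``minimal $\times$/$\circ$'' reformulation first as a clean intermediate characterization.

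The core step is then to show that the pointwise-regular path condition reduces to $\lambda$ itself being $\delta$-regular. The key input is the Remark following Definition \ref{restrictedgraph}(ii): for $\delta \geq 0$, $A_\delta$ is the intersection of the vertex set of $\ZZZ(\rho_\delta)$ with the fundamental alcove, and more generally the regular points reachable through regular points form a connected region. I would argue that if $\lambda$ is $\delta$-regular then one can build a monotone box-adding walk from $\emptyset$ to $\lambda$ staying $\delta$-regular throughout, using Lemma \ref{onebox}: adding a box changes $\deg_\delta$ by $0$ or $\pm 1$, and the Case I configurations (where $\deg_\delta$ is preserved) let us add boxes one at a time along a suitable order (e.g.\ adding boxes to fill $\lambda$ row-by-row or in a way that never introduces a transient $\times$ when $m\geq 0$, nor a transient $\circ$ when $m<0$). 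Conversely, any vertex on a $\delta$-restricted walk is itself $\delta$-regular, so in particular $\lambda$ is.

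The remaining, largely computational, step is to convert ``$\lambda$ is $\delta$-regular'' --- equivalently ``$x_\lambda$ has no $\circ$'s (if $m\geq 0$)'' or ``no $\times$'s (if $m<0$)'' --- into the stated inequalities on parts of $\lambda$. For this I would use Proposition \ref{ylambda}: a $\circ$ in $x_\lambda$ corresponds to a pair $(i\leq j)$ with $-\lambda_i-\lambda_j+i+j=\delta$, and the admissibility condition (\ref{admissiblecirc}) detects whether it lies in $[\lambda]$. For $\delta\geq 0$ the absence of $\circ$'s translates, via the $\circ\leftrightarrow$ pair correspondence applied to the smallest relevant pair $(i,j)=(1,2)$ (or $(1,1)$), into $\lambda^T_1+\lambda^T_2\leq\delta$ after reading off the transpose; I would check the boundary case carefully. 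For $\delta=-2m<0$ the absence of $\times$'s, analyzed through Proposition \ref{ylambda}(i) with the pair condition $\lambda^T_i+\lambda^T_j-i-j+2=\delta$, yields $\lambda_1\leq m$; and the odd case $\delta=-2m+1$ yields $\lambda_1+\lambda_2\leq 2m+1$ by the same mechanism with the extra $\delta_{\delta,2m+1}$ shift.

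The main obstacle I anticipate is the connectivity argument in the second step: showing that $\delta$-regularity of the endpoint $\lambda$ guarantees an \emph{entirely} $\delta$-regular connecting walk, rather than merely that the endpoints can be joined. One must rule out being forced through $\delta$-singular intermediate points, and this requires exhibiting an explicit regular box-adding order and verifying via Lemma \ref{onebox} that each intermediate partition stays at minimal $\times$/$\circ$ count. Handling the three sign-of-$\delta$ regimes --- and the parity split in case (ii) versus (iii) --- uniformly, together with the off-by-one boundary effects from the $\delta_{\delta,2m}$ corrections in Proposition \ref{ylambda}, is where the care will be needed; the rest is bookkeeping with the weight-diagram dictionary already developed.
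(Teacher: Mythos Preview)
Your approach has a genuine gap at the core: Step~2 is false. The set $A_\delta$ is \emph{not} the set of all $\delta$-regular partitions --- it is only the connected component of $\emptyset$ in the $\delta$-regular subgraph, and this is strictly smaller. A concrete counterexample: take $\delta=2$ and $\lambda=(2,2)$. Then $e_2(\lambda)=(1,0,-3,-4,\dots)$ has degree of singularity $0$, so $\lambda$ is $2$-regular, yet $\lambda^T_1+\lambda^T_2=4>2$, so $(2,2)\notin A_2$ by the statement you are trying to prove. Indeed every Young-graph neighbour of $(2,2)$ through which a monotone walk from $\emptyset$ must pass, namely $(2,1)$, has $e_2((2,1))=(1,-1,-3,\dots)$ and is $2$-singular; one checks similarly that no longer (non-monotone) $\delta$-regular walk exists either. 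Consequently your Step~3 cannot succeed as written: ``no $\times$'s'' (which is the correct reformulation of $\delta$-regular for $m\geq0$ --- you have the $\times$/$\circ$ roles swapped) is \emph{not} equivalent to $\lambda^T_1+\lambda^T_2\leq\delta$.

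What the paper does instead is to work directly with the Case~I moves of Lemma~\ref{onebox}, which are exactly the degree-preserving box additions. Starting from $x_\emptyset$ (which has only $\circ$'s or only $\times$'s, plus $\vee$'s), one observes that moves (i)--(iv) merely slide a $\circ$ or $\times$ past a $\vee$ or $\wedge$, and move (v) together with the $\vee/\wedge$ ambiguity at vertex~$0$ allows at most a single $\wedge$ to appear, necessarily to the left of all $\vee$'s. Thus $A_\delta$ consists precisely of those $\lambda$ whose weight diagram has the minimal number of $\times$'s (resp.\ $\circ$'s) \emph{and} whose remaining $\vee/\wedge$ pattern is of the form ``(at most one) $\wedge$ followed by all $\vee$'s''. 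It is this extra structural constraint on the $\wedge$'s --- not mere $\delta$-regularity --- that translates, via the diagram-to-partition dictionary of Figures~\ref{weighttranspose} and~\ref{weightpartition}, into the inequalities (i)--(iii). Your plan recovers only the $\delta$-regularity half and misses this constraint entirely.
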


\begin{proof}
For $\delta = 2m$ or $2m+1$, the weight diagram $x_\emptyset$ consists of $m$ $\circ$'s (for $m\geq 0$) or $m$ $\times$ for $m<0$ followed by infinitely many $\vee$'s (see Figure \ref{emptypartition}).
Moreover, all possible configurations of translation equivalent weight diagrams are given in Lemma \ref{onebox} (i)-(v). It follows that the weight diagrams corresponding to partitions in $A_\delta$ are precisely those having $m$ $\circ$ (for $m\geq 0$) or $m$ $\times$ (for $m<0$) and with the other vertices either all labelled by $\vee$'s, or labelled by one $\wedge$ and infinitely many $\vee$'s, in that order. The result then follows from the end of Section 2.2 (see Figure \ref{weighttranspose} and \ref{weightpartition}).
\end{proof}

\begin{rem}
Proposition \ref{A0} also follows by combining \cite{W} (definition before Theorem (3.4) and Corollary (3.5)(b)) with Theorem \ref{king}.
\end{rem}

\begin{thm}\cite[Theorem 2.4(b)]{RamWenzl92}
Let $\lambda\in A_\delta$. Then there is an action of $B_n(\delta)$ on the vector space spanned by all walks on $\mathcal{Y}_\delta$ from $\emptyset$ to $\lambda$. This module is isomorphic to $L_n(\lambda)$.
\end{thm}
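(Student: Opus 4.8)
The plan is to show that truncating the generic Leduc--Ram representation $\Pi^\lambda$ to walks lying entirely in $\mathcal{Y}_\delta$ gives a well-defined $B_n(\delta)$-module, and then to identify it with $L_n(\lambda)$ using the dimension formula already established in Corollary \ref{dimsimple}. Let me write $V^\lambda_\delta$ for the span of all walks on $\mathcal{Y}_\delta$ from $\emptyset$ to $\lambda$, viewed as a subspace of the specialisation of $\Pi^\lambda$ at $u=\delta$.

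\textbf{Well-definedness of the truncated action.} First I would check that the formulae in Theorem \ref{LR} (for $\sigma_m$ and $e_m$) specialise at $u=\delta$ and preserve $V^\lambda_\delta$; that is, for a $\delta$-restricted walk $T$, every walk $S$ appearing with nonzero coefficient in $\sigma_m T$ or $e_m T$ is again $\delta$-restricted. The key tool is the geometric interpretation of the King polynomials (Theorem \ref{king}) and of the Brauer diamonds (Theorem \ref{diamond}). The point is that all the off-diagonal mixing and the diagonal coefficients are governed by the inner products $\langle x, \epsilon_i \pm \epsilon_j\rangle$ and by ratios of King polynomials $P_{s(m)}(\delta)/P_{s(m-1)}(\delta)$. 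By Theorem \ref{king}, $P_\mu(\delta)\neq 0$ precisely when $\mu$ is $\delta$-regular, and by definition every partition occurring in a $\delta$-restricted walk is $\delta$-regular, so no denominator vanishes and no factor $\sqrt{P_{s(m)}(\delta)P_{t(m)}(\delta)}$ forces us outside $\mathcal{Y}_\delta$. The crucial claim is that whenever an intermediate point $s(m)$ would be forced to be $\delta$-singular --- i.e.\ a walk would step out of the restricted region --- the corresponding matrix coefficient vanishes. This is exactly the geometric ``decoupling'' phenomenon described in the Temperley--Lieb analogy (Section 2.3): the mixing coefficient vanishes at heights corresponding to walls, so a restricted walk never couples to a non-restricted one. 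Concretely, I would verify using Theorem \ref{diamond} that whenever the relevant diamond $\Diamond_m(S,S)$ sits on a $W_+$-wall (so that $s(m)$ is $\delta$-singular), the coefficient $(\sigma_m)_{ST}$ or $(e_m)_{ST}$ leading out of $V^\lambda_\delta$ is zero.

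\textbf{Identification with $L_n(\lambda)$.} Once $V^\lambda_\delta$ is a genuine $B_n(\delta)$-module, its dimension equals the number of walks on $\mathcal{Y}_\delta$ from $\emptyset$ to $\lambda$, which by Corollary \ref{dimsimple} is exactly $\dim L_n(\lambda)$. Since $\lambda\in A_\delta$, the module $V^\lambda_\delta$ has a basis indexed by these walks and inherits from the generic construction the property of being generated appropriately; I would argue that $V^\lambda_\delta$ is indecomposable with the correct top, or alternatively that the dimension match together with the known structure of $\Delta_n(\lambda)$ forces $V^\lambda_\delta\cong L_n(\lambda)$. The cleanest route is to exhibit a nonzero $B_n(\delta)$-homomorphism between $V^\lambda_\delta$ and $L_n(\lambda)$ (for instance by noting that the single walk going straight up from $\emptyset$ to $\lambda$ generates a copy of the head) and then conclude isomorphism from the equality of dimensions.

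\textbf{Main obstacle.} The hard part will be the well-definedness step, specifically verifying rigorously that the action stays inside $V^\lambda_\delta$ at the boundary of the restricted region. One must handle each of the diamond configurations of Theorem \ref{diamond} and confront the possibility that a $\delta$-restricted walk $T$ could a priori have $\sigma_m T$ or $e_m T$ involve a walk passing through a $\delta$-singular point; showing the offending coefficient always vanishes requires matching the ``admissibility'' conditions \eqref{admissibletimes} and \eqref{admissiblecirc} against the vanishing of the specialised King-polynomial ratios and diamond values. The delicate case is $e_m$, where the intermediate point $s(m)$ may legitimately leave the dominant chamber or the regular locus; here one relies on the fact (Theorem \ref{king}) that $P_{s(m)}(\delta)=0$ whenever $s(m)$ is $\delta$-singular, so that the corresponding summand drops out automatically. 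Verifying this vanishing uniformly across all cases, and confirming that the closure relations of $\mathcal{Y}_\delta$ are respected, is where the real work lies.
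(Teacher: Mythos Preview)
Your overall strategy matches the paper's: truncate the Leduc--Ram matrices to $\delta$-restricted walks, verify well-definedness via Theorems~\ref{king} and~\ref{diamond}, then identify the resulting module with $L_n(\lambda)$ using the dimension formula of Corollary~\ref{dimsimple}. Two points, however, need more care than your sketch provides.

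First, in the well-definedness step you only discuss denominators coming from King polynomials. But the diagonal entry $(\sigma_m)_{SS}$ in Case~3 of Theorem~\ref{diamond} has denominator $\Diamond_m(S,S)=2\langle x,\alpha\rangle+1$, and this \emph{can} vanish even for a $\delta$-restricted walk $S$: the equation $2x_i\pm 1=0$ is not the equation of any hyperplane in $\HH$, so $\delta$-regularity gives no protection. The paper handles this with a limiting argument: before specialisation one has $(\sigma_m(u))^2=I$, and since all off-diagonal entries $(\sigma_m(u))_{ST}$ with $T\neq S$ have already been shown to have finite limits as $u\to\delta$, the identity $(\sigma_m(u))_{SS}^2+\sum_{T\neq S}(\sigma_m(u))_{ST}^2=1$ forces $(\sigma_m(u))_{SS}$ to be pole-free at $u=\delta$ as well. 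This subtlety is invisible in your outline.

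Second, your identification step does not actually construct a homomorphism. The phrase ``the single walk going straight up from $\emptyset$ to $\lambda$ generates a copy of the head'' is not meaningful when $|\lambda|<n$ (no such walk of length $n$ exists), and even when $|\lambda|=n$ it is unclear what map this is meant to produce. The paper instead reads off directly from the matrix formulae that $\res_n V^\lambda_\delta\cong\bigoplus_{\lambda'\in\supp(\lambda)\cap A_\delta}V^{\lambda'}_\delta$, and then argues by induction on $n$: Frobenius reciprocity together with $\Delta_n(\lambda)\cong\ind_{n-1}^{\lambda}\Delta_{n-1}(\lambda')$ for $\lambda'\in\supp(\lambda)\cap A_\delta$ yields $\Hom_n(\Delta_n(\lambda),V^\lambda_\delta)\cong\Hom_{n-1}(\Delta_{n-1}(\lambda'),L_{n-1}(\lambda'))=\CC$, so $L_n(\lambda)$ is a composition factor of $V^\lambda_\delta$, and the dimension equality from Corollary~\ref{dimsimple} finishes.
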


\begin{proof}
We consider the action of the generic Brauer algebra $B_n(u)$ on the Leduc-Ram representations $\Pi^{\lambda}$ and claim that the truncation of this action to $\delta$-restricted walks gives a well-defined representation by setting $u=\delta$. This requires two things:\\
(1) that all matrix entries $(\sigma_m)_{ST}, (e_m)_{ST}$ where $S,T$ are $\delta$-restricted walks are well defined,\\
(2) that the matrix entries $(\sigma_m)_{ST}, (e_m)_{ST}$ where $T$ is $\delta$-restricted but $S$ is not are all zero.

Note that if $(\sigma_m)_{ST}$ or $(e_m)_{ST}$ are non-zero, then $(S,T)$ is a Brauer diamond. So we will consider the three cases of Brauer diamonds given in Theorem \ref{diamond}.

\textbf{Case 1.} $S=(x, x\pm \epsilon_i, x\pm \epsilon_i\pm\epsilon_j)$, $T=(x, x\pm\epsilon_j, x\pm \epsilon_i\pm\epsilon_j)$.\\
For note that the submatrix of $(e_m)$ mixing between $S$ and $T$ is identically zero as $s(m-1)\neq s(m+1)$. So there is nothing to check here.\\
For $i=j$ we have $S=T$ and $(\sigma_m)_{SS}=-1$.
For $i\neq j$, write $h=\langle x, \epsilon_i - \epsilon_j\rangle$. As $x$ is strictly decreasing we have that $h\neq 0$. So, the submatrix of the matrix $(\sigma_m)$ mixing between the walks $S$ and $T$ given by
$$\left(\begin{array}{cc} \frac{1}{h} & \frac{\sqrt{h^2-1}}{|h|} \\ \frac{\sqrt{h^2-1}}{|h|} & -\frac{1}{h}\end{array} \right)$$
is always well-defined, see Figure \ref{ijplane1}. This proves (1). \\
Observe that of $T$ is $\delta$-regular, then so is $S$. Indeed, if $S$ was $\delta$-singular, then $\langle x\pm\epsilon_i , \epsilon_i +\epsilon_k\rangle$ would be zero for some $k$. But as $x\pm \epsilon_i\pm\epsilon_j$ is $\delta$-regular, we must have $k=j$. But then we would have $\langle x\pm\epsilon_j , \epsilon_i +\epsilon_j\rangle = \langle x\pm\epsilon_i , \epsilon_i +\epsilon_k\rangle = 0$ which contradicts the fact that $T$ is $\delta$-regular. So there is noting to check for (2) in this case.

\medskip

\textbf{Case 2.} $S=(x, x\pm \epsilon_i, x\pm \epsilon_i \mp \epsilon_j)$, $T=(x, x\mp \epsilon_j, x\pm\epsilon_i \mp \epsilon_j)$ with $i\neq j$.\\
As in Case 1, we have that the submatrix of $(e_m)$ is identically zero in this case. Write $g=\langle x, \epsilon_i + \epsilon_j\rangle$. Then the submatrix of $(\sigma_m)$ mixing between the walks $S$ and $T$ is given by
$$\left(\begin{array}{cc} \pm \frac{1}{g} & \frac{\sqrt{g^2-1}}{|g|} \\ \frac{\sqrt{g^2-1}}{|g|} & \mp \frac{1}{g}\end{array} \right),$$
(see Figure \ref{ijplane2}).
Now we claim that if $T$ is $\delta$-regular, then we cannot have $g=0$. Indeed, for $T$ $\delta$-regular, we have that $x$, $x\mp \epsilon_j$ and $x\mp \epsilon_j \pm \epsilon_i$ all have the same degree of singularity. Now if $g=\langle x, \epsilon_i+\epsilon_j\rangle =0$ then we have $x_j=-x_i$. But then, $(x\mp \epsilon_j)_j=-x_i\mp 1$ and as $x\mp \epsilon_j$ has the same degree of singularity as $x$ we must have that $x\mp \epsilon_j$ has a coordinate equal to $x_i\pm 1$. Thus $x\mp \epsilon_j$ has both entries $x_i$ and $x_i\pm 1$. But this would imply that $x\mp \epsilon_j \pm \epsilon_i$ is not strictly decreasing, which is a contradiction. Hence we have shown that $g$ cannot be zero and the matrix entries are all well-defined, proving (1).

Now suppose that $T$ is $\delta$-regular but $S$ is not. So we have that $x$ is $\delta$-regular and $x\pm \epsilon_i$ is not. Thus we have that $x_i\pm 1=-x_h$ for some $h$. But as $x\pm \epsilon_i\mp \epsilon_j$ is $\delta$-regular, we have that $h=j$ and so $x_j=-x_i\mp 1$. This shows that $g= \langle x, \epsilon_i + \epsilon_j\rangle =\mp 1$ and hence $g^2-1=0$. This proves (2).

\medskip

\textbf{Case 3.}  $S=(x, x+ \alpha, x)$, $T=(x, x+ \beta, x)$ where $\alpha, \beta\in \{\pm \epsilon_i \, : \, i\geq 1\}$, see Figures \ref{ijplane3} and \ref{ijplane4}.
In this case the submatrix of $(\sigma_m)$ mixing between $S$ and $T$ is given by
$$\left(\begin{array}{cc} \frac{1}{2\langle x, \alpha\rangle +1}\left( 1-\frac{P_{x+\alpha}(\delta)}{P_{x}(\delta)}\right) & \frac{-1}{ \langle x, \alpha + \beta \rangle +1}\left(\frac{\sqrt{P_{x+\alpha}(\delta)P_{x+\beta}(\delta)}}{P_x(\delta)}\right) \\
\frac{-1}{\langle x, \alpha + \beta \rangle +1}\left(\frac{\sqrt{P_{x+\alpha }(\delta)P_{x+\beta}(\delta)}}{P_x(\delta)}\right) & \frac{1}{2\langle x, \beta \rangle +1}\left( 1-\frac{P_{x+\beta}(\delta)}{P_{x}(\delta)}\right)\end{array}\right)$$
and the submatrix of $(e_m)$ mixing between $S$ and $T$ is given by
$$\left(\begin{array}{cc} \frac{|P_{x+\alpha}(\delta)|}{P_{x+\alpha}(\delta)} & \frac{\sqrt{P_{x+\alpha }(\delta)P_{x+\beta}(\delta)}}{P_x(\delta)} \\
\frac{\sqrt{P_{x+\alpha}(\delta)P_{x+\beta}(\delta)}}{P_x(\delta)} & \frac{|P_{x+\beta}(\delta)|}{P_{x+\beta}(\delta)}\end{array}\right).$$
First note that if $T$ is $\delta$-regular, then using Theorem 7.2 we have that $P_x(\delta)\neq 0$. Thus the entries in the submatrix representing the action of $e_m$ are all well-defined. 

Now suppose that we had $\langle x, \alpha + \beta\rangle +1 =0$, that is $\langle x, \alpha + \beta \rangle = -1$. So we get $\langle x+\beta , \alpha + \beta\rangle =0$. Now $\alpha + \beta  = \pm (\epsilon_i\pm \epsilon_j$ for some $i,j$ and we can assume $i\neq j$ as $S\neq T$ and $\alpha +\beta \neq 0$. Moreover, as $x+\beta$ is strictly decreasing we cannot have $\alpha + \beta =\pm (\epsilon_i - \epsilon_j)$. Now suppose $\langle x+\beta , \epsilon_i+\epsilon_j\rangle =0$, with $\alpha = \pm \epsilon_i$ and $\beta =\pm \epsilon_j$. As $T$ is $\delta$-regular, we have that $x$ and $x+\beta=x\pm \epsilon_j$ have the same degree of singularity. Thus $x$ must have an entry equal to $x_i \pm 1$ (in position $i\pm 1$). But then $x+\alpha=x\pm \epsilon_i$ is not strictly decreasing, which is a contradiction. This proves that the off-diagonal entries of $(\sigma_m)$ are well-defined.

Now, if $T$ is $\delta$-regular but $S$ is not then we have that the off diagonal entries in $(\sigma_m)$ and $(e_m)$ are all zero using Theorem 7.2. This proves (2).

Now we claim that the diagonal entries in $(\sigma_m)$  are also well-defined.
Observe that it is possible to have $2\langle x , \alpha \rangle +1 =0$. However we claim that, as a polynomial in $\delta$, $P_x(\delta)-P_{x+\alpha}(\delta)$ is divisible by $2\langle x, \alpha \rangle +1$. To see this, note that before specialisation, the matrix $(\sigma_m (u))$ gives a well-defined representation of $B_n(u)$ and so we have $(\sigma_m(u))^2=I$ the identity matrix.
In particular we have that
$$\sum_{T\in \Omega_m(S)}(\sigma_m(u))_{ST}(\sigma_m(u))_{TS}=1.$$
So we have
$$(\sigma_m(u))_{SS}^2 + \sum_{\begin{subarray}{c} T\in \Omega_m(S) \\ T\neq S\end{subarray}}(\sigma_m(u))_{ST}^2=1.$$
Now we have seen that $\lim_{u\rightarrow \delta} (\sigma_m(u))_{ST}$ for all $T\in \Omega_m(S)$ and $T\neq S$ exist and are finite. Thus we must have that $\lim_{u\rightarrow \delta}(\sigma_m(u))^2_{SS}$ exists and is finite. This means that $(\sigma_m(u))_{SS}$ is a rational function with no poles at $u=\delta$, proving our claim. This completes the proof of (1).

\medskip

It remains to show that this module is isomorphic to $L_n(\lambda)$, defined as the simple head of the standard module $\Delta_n(\lambda)$. Denote the representation of $B_n(\delta)$ on $\delta$-restrited walks defined above by $\tilde{L}_n(\lambda)$. By looking at the action of the generators $\sigma_m$ and $e_m$ , we immediately see that
$$\res_n \tilde{L}_n(\lambda)\cong \bigoplus_{\lambda' \in \supp(\lambda)\cap A_\delta}\tilde{L}_{n-1}(\lambda')$$
We will prove by induction on $n$ that $\tilde{L}_n(\lambda)\cong L_n(\lambda)$. If $n=0$ then there is nothing to prove. Assume that the result holds for $n-1$. 
Let $\lambda'\in \supp(\lambda)\cap A_\delta$ (note that as $\delta\neq 0$, we have $\supp(\lambda)\cap A_\delta \neq 0$). Then we have
\begin{eqnarray*}
\Hom_n(\Delta_n(\lambda) , \tilde{L}_n(\lambda)) &\cong & \Hom_n(\ind_{n-1}^\lambda \Delta_{n-1}(\lambda'), \tilde{L}_n(\lambda))\\
&\cong & \Hom_{n-1}(\Delta_{n-1}(\lambda'), \res_n^{\lambda'} \tilde{L}_n(\lambda))\\
&\cong& \Hom_{n-1}(\Delta_{n-1}(\lambda'), {\rm pr}^{\lambda'} \oplus_{\mu\in \supp(\lambda)\cap A_\delta}\tilde{L}_{n-1}(\mu))\\
&\cong&  \Hom_{n-1}(\Delta_{n-1}(\lambda'), {\rm pr}^{\lambda'} \oplus_{\mu\in \supp(\lambda)\cap A_\delta}L_{n-1}(\mu)) \quad \mbox{by induction}\\
&\cong & \Hom_{n-1}(\Delta_{n-1}(\lambda'), L_{n-1}(\lambda'))\\
&=& \CC.
\end{eqnarray*}
This shows that $\tilde{L}_n(\lambda)$ contains $L_n(\lambda)$ as a composition factor. But using Corollary \ref{dimsimple}, we have that $\dim L_n(\lambda)=\dim \tilde{L}_n(\lambda)$ and so we must have $\tilde{L}_n(\lambda) \cong L_n(\lambda)$.

\end{proof}

\end{document}